\documentclass[12pt,american]{article}
\usepackage[T1]{fontenc}
\usepackage[utf8]{inputenc}
\usepackage{lmodern}
\usepackage{babel}
\usepackage{mathtools}
\usepackage{amsmath}
\usepackage{amsthm}
\usepackage{amssymb}
\usepackage{geometry}
\usepackage{setspace}
\usepackage{microtype}
\usepackage[hidelinks]{hyperref}
\allowdisplaybreaks
\numberwithin{equation}{section}

\newtheorem{theorem}{Theorem}[section]
\newtheorem{proposition}[theorem]{Proposition}
\newtheorem{lemma}[theorem]{Lemma}
\newtheorem{corollary}[theorem]{Corollary}
\theoremstyle{definition}
\newtheorem{definition}[theorem]{Definition}
\theoremstyle{remark}
\newtheorem{remark}[theorem]{Remark}

\title{Approximation by mixtures of multivariate Erlang distributions}
\author{Hien Duy Nguyen\\[0.4em]
\small School of Computing, Engineering and Mathematical Sciences, La Trobe University,\\
\small Bundoora, VIC 3086, Australia\\[0.2em]
\small Institute of Mathematics for Industry, Kyushu University,\\
\small Fukuoka 819-0395, Japan}
\date{}

\begin{document}
\maketitle 
\begin{abstract}
We prove that finite multivariate Erlang mixture densities with a
common rate parameter are dense in the class of probability densities
on $\mathbb{R}_{+}^{d}$ that belong to $L^{p}$, for every dimension
$d\in\mathbb{N}$ and every $1\le p<\infty$. The argument is constructive:
the one-dimensional Szász--Mirakjan--Kantorovich operator 
yields Erlang mixture approximations, and its tensor product yields multivariate
approximants with a common scale. We then obtain several quantitative
consequences. These include compact-set uniform approximation bounds
and, under local Hölder conditions of order $\alpha\in(0,1]$, rates
of order $n^{-\alpha/2}$ as the common scale $1/n$ tends to zero,
whole-domain convergence in weighted sup norms, weighted and unweighted
$L^{p}$ rates, and explicit rates for finite mixtures indexed by
the number of mixture components. In particular, if the approximating
density is required to have at most $K$ mixture components, then
on fixed compact cubes we obtain an algebraic rate of order $K^{-\alpha/(2d)}$;
in global weighted sup norms we obtain the explicit algebraic component-count
rate $K^{-\alpha/[2d(2d+\alpha)]}$; and for $1<p<\infty$ we obtain
corresponding weighted $L^{p}$ component-count rates. The results
strengthen the weak-approximation theory for multivariate Erlang mixture
distributions and yield immediate corollaries for broader classes
such as product-gamma mixtures. 
\end{abstract}
\noindent\textbf{Keywords:} multivariate Erlang mixtures; Erlang distributions; Szász--Mirakjan--Kantorovich
operator; density approximation; weighted $L^{p}$ approximation;
approximation rates.

\section{Introduction}

Modeling multivariate data on the positive orthant is a recurring
task in actuarial science, risk theory, reliability, and applied statistics.
Typical examples include claim severities, deductibles, and aggregate-loss
calculations in non-life insurance \cite{Boland2007,KaasGoovaertsDhaeneDenuit2008,KlugmanPanjerWillmot2012};
ruin-probability and retention calculations in classical risk theory
\cite{AsmussenAlbrecher2010,Boland2007,KaasGoovaertsDhaeneDenuit2008}; survival-time
and life-contingency calculations \cite{Promislow2015}; and dependent
loss components in multivariate insurance portfolios \cite{LeeLin2012,WillmotLin2011}.
In this setting, mixtures of Erlang distributions are attractive because
they are flexible, analytically tractable, and stable under several
operations that are important in insurance and risk modelling.

The actuarial literature contains a substantial body of work on this
class. In particular, \cite{LeeLin2012} introduced multivariate Erlang mixture
distributions and proved that they are dense, in the weak topology,
in the class of positive continuous distributions on $\mathbb{R}_{+}^{d}$.
Subsequent papers have studied structural properties, estimation,
and statistical inference for Erlang mixture
models \cite{GuiHuangLin2021,VerbelenAntonioClaeskens2016,WillmotLin2011,WillmotWoo2015,YinEtAl2019}.
In the univariate setting, the weak denseness of Erlang mixtures is
well known in the risk and actuarial literature and appears, for instance,
in \cite{AsmussenAlbrecher2010,LeeLin2010,Tijms2003}.

From the point of view of approximation theory, the same approximation
mechanism is classical. The relevant operator is the Szász--Mirakjan--Kantorovich
operator, whose origins go back to the works of \cite{Butzer1954,Mirakyan1941,Szasz1950};
see also the monograph \cite{AltomareCampiti1994}. In one dimension,
considerably stronger statements than weak convergence are available.
Uniform approximation on the half-line already appears in \cite{Szasz1950},
and $L^{p}$ convergence and rate results were obtained in \cite{Totik1983,Totik1985};
see also \cite{AltomareCappellettiLeonessa2013}. By contrast, although
multivariate Szász--Mirakjan-type operators do appear in the approximation-theory
literature \cite{BarbosuEtAl2010,MahmudovKara2024,YadavMishraMeherMursaleen2022},
we do not know of any work that explicitly transfers the one-dimensional
$L^{p}$ theory to the class of multivariate Erlang mixture densities
in arbitrary dimension.

Stronger convergence modes matter because weak convergence is often too weak for the questions
that arise in modelling and inference. By definition, weak convergence
only controls integrals against bounded continuous test functions
\cite{vanDerVaart1998}, and therefore does not control the density
itself. Even for absolutely continuous laws one may have weak convergence
while the densities remain separated in total variation. For a classical example,
on $[0,1]$ the densities $f_{n}(x)=1+\frac{1}{2}\sin(2\pi nx)$ converge
weakly to the uniform law, yet $\|f_{n}-1\|_{L^{1}([0,1])}=1/\pi$
for every $n$.

These failures are not merely cosmetic. In actuarial and risk modelling
one often needs uniformly accurate approximations of lower-orthant
probabilities for dependent portfolios \cite{LeeLin2012,WillmotLin2011},
tail or survival probabilities around deductibles and retentions \cite{AsmussenAlbrecher2010,Boland2007,KaasGoovaertsDhaeneDenuit2008},
limited expected values \cite{Boland2007,KlugmanPanjerWillmot2012}, and
stop-loss or treaty payoffs \cite{Boland2007,KaasGoovaertsDhaeneDenuit2008,KlugmanPanjerWillmot2012}.
The stronger modes of convergence obtained in this text provide control of such quantities. If $X_{n}$ and $X$ are $\mathbb{R}_{+}^{d}$-valued
random vectors with densities $f_{n}$ and $f$, and if $f_{n}\to f$
in $L^{1}(\mathbb{R}_{+}^{d})$, then for every measurable set $A\subseteq\mathbb{R}_{+}^{d}$,
\[
\left|\mathbb{P}(X_{n}\in A)-\mathbb{P}(X\in A)\right|\le\|f_{n}-f\|_{L^{1}(\mathbb{R}_{+}^{d})}.
\]
In the univariate case this yields 
\[
\sup_{x\ge0}|F_{n}(x)-F(x)|=\sup_{x\ge0}|\overline{F}_{n}(x)-\overline{F}(x)|\le\|f_{n}-f\|_{L^{1}(\mathbb{R}_{+})},
\]
and therefore, for every fixed $M>0$, 
\[
\sup_{0\le u\le M}\left|\mathrm{LEV}_{n}(u)-\mathrm{LEV}(u)\right|\le M\|f_{n}-f\|_{L^{1}(\mathbb{R}_{+})},
\]
where $\mathrm{LEV}_{n}(u)=\mathbb{E}[\min(X_{n},u)]$ and $\mathrm{LEV}(u)=\mathbb{E}[\min(X,u)]$
denote the limited expected value functions.

For $1<p<\infty$, with conjugate exponent $q=p/(p-1)$, Hölder's
inequality gives $L^{p}$ estimate 
\[
\left|\mathbb{P}(X_{n}\in A)-\mathbb{P}(X\in A)\right|\le\lambda_{d}(A)^{1/q}\|f_{n}-f\|_{L^{p}(\mathbb{R}_{+}^{d})}
\]
for every measurable set $A\subseteq\mathbb{R}_{+}^{d}$ with finite
Lebesgue measure $\lambda_{d}(A)$, and more generally 
\[
\left|\mathbb{E}[\psi(X_{n})]-\mathbb{E}[\psi(X)]\right|\le\|\psi\|_{L^{q}(\mathbb{R}_{+}^{d})}\|f_{n}-f\|_{L^{p}(\mathbb{R}_{+}^{d})}
\]
for every measurable payoff $\psi\in L^{q}(\mathbb{R}_{+}^{d})$.
For example, taking 
\[
\psi_{u,c,M}(x)=\left((x_{1}+\cdots+x_{d}-u)_{+}\wedge c\right)\mathbf{1}_{[0,M]^{d}}(x),\qquad a_{+}=\max\{a,0\},
\]
gives the bound for capped aggregate stop-loss payoffs on a fixed
compact window \cite{Boland2007,KaasGoovaertsDhaeneDenuit2008,KlugmanPanjerWillmot2012,LeeLin2012}.

Local $L^{\infty}$ control is useful in a different way. If $M>0$
and $\|f_{n}-f\|_{L^{\infty}([0,M]^{d})}$ is small, then every valuation
functional of the form 
\[
\Pi_{\psi}(g)=\int_{[0,M]^{d}}\psi(x)g(x)\,\mathrm{d}x
\]
with $\psi\in L^{1}([0,M]^{d})$ satisfies 
\[
|\Pi_{\psi}(f_{n})-\Pi_{\psi}(f)|\le\|\psi\|_{L^{1}([0,M]^{d})}\,\|f_{n}-f\|_{L^{\infty}([0,M]^{d})}.
\]
Thus compact-uniform density approximation controls whole families
of locally supported contract valuations. In the univariate survival
setting, if in addition $f_{n}\to f$ in $L^{1}(\mathbb{R}_{+})$
and locally uniformly on $[0,M]$, and if the survival function $\overline{F}(x)=\int_{x}^{\infty}f(t)\,\mathrm{d}t$
is bounded away from zero on $[0,M]$, then the hazard rates $h_{n}=f_{n}/\overline{F}_{n}$,
which are central objects in survival and life-contingency modelling
\cite{Promislow2015}, converge uniformly to $h=f/\overline{F}$ on
$[0,M]$. Accordingly, a constructive theory in $L^{1}$, $L^{p}$,
and local or weighted uniform norms is substantively stronger than
weak denseness. Since $L^{1}$ control yields convergence
in total variation, convergence in Hellinger distance follows.

The main purpose of this manuscript is therefore to record a direct
and self-contained transfer of the relevant approximation-operator
results to the Erlang mixture setting. The first theorem proves that
finite multivariate Erlang mixture densities with a common scale parameter
are dense in the class of probability densities on $\mathbb{R}_{+}^{d}$
that belong to $L^{p}$, for every $1\le p<\infty$. The proof is
constructive and rests on three simple ingredients: the classical
one-dimensional Szász--Mirakjan--Kantorovich theorem, an explicit
observation that the one-dimensional operator is already an Erlang-mixture
operator, and a tensorization argument over the coordinates. A truncation
step then yields genuine finite mixtures, rather than merely countable
ones. This finite-mixture conclusion is the one that is relevant in
practice, since fitting, penalization, and numerical implementation
are carried out in finite-dimensional parameter families; see \cite{KimKottas2022,LeeLin2010,VerbelenAntonioClaeskens2016,YinEtAl2019}.

A second goal is quantitative. For each approximation index $n\ge1$,
the operator $\mathcal{K}_{n}^{(d)}$ produces an Erlang mixture approximation
with common rate $\beta_{n}=n$ and therefore common scale parameter
$q_{n}=1/n$ (using $n$ here in accordance to classical approximation theoretic indexing notation). Using a probabilistic
representation of this operator, we derive compact-set $L^{\infty}$
estimates and, under Hölder assumptions of order $\alpha\in(0,1]$,
local rates of order $n^{-\alpha/2}$. We also obtain global weighted
sup-norm convergence under an intrinsic modulus of continuity condition
adapted to the size of the kernel spread, together with weighted and
unweighted $L^{p}$ rate bounds. In addition, a truncation device
yields rates with respect to the number of mixture components. If
$K$ denotes the total number of components in the approximating finite
mixture, then on fixed compact cubes we obtain a rate of order $K^{-\alpha/(2d)}$.
In global weighted sup norms the same idea still produces the explicit
component-count rate $K^{-\alpha/[2d(2d+\alpha)]}$, and for $1<p<\infty$
we obtain analogous weighted $L^{p}$ component-count rates.

Thus, our contributions are an explicit multivariate $L^{p}$ approximation
theorem for the Erlang mixture class, together with the finite-mixture
and component-count consequences that are relevant in constructive
modelling. In this sense, the present results bring Erlang mixture
approximation closer to the constructive finite-mixture theory available
for location-scale mixtures on $\mathbb{R}^{d}$ (cf. \cite{NguyenEtAl2020,NguyenEtAl2023}).
Further, since the Erlang family is contained in the wider gamma family,
the results immediately imply corresponding denseness statements for
broader classes of product-gamma mixtures; see \cite{BagnatoPunzo2013,BochkinaRousseau2017,WiperInsuaRuggeri2001,YoungEtAl2019}.

The remainder of the paper is organized as follows. Section~\ref{sec:notation}
collects notation, basic definitions, and the one-dimensional approximation
result from \cite{AltomareCappellettiLeonessa2013}, which serves as our only external technical input. Section~\ref{sec:qualitative}
states the qualitative approximation theorems in $L^{p}$ and weighted
sup norms. Section~\ref{sec:scale-rates} reports rates with respect
to the common scale parameter. Section~\ref{sec:component-rates}
presents finite-mixture rates with respect to the number of components.
All proofs and auxiliary technical results are collected in Section~\ref{sec:proofs}.

\section{Notation and technical preliminaries}

\label{sec:notation}

Throughout, $\mathbb{N}=\{1,2,\dots\}$, $\mathbb{N}_{0}=\{0,1,2,\dots\}$,
and $\mathbb{R}_{+}=[0,\infty)$. For $x=(x_{1},\dots,x_{d})\in\mathbb{R}_{+}^{d}$,
we write 
\[
\left\lVert x\right\rVert _{1}=\sum_{j=1}^{d}\left\lvert x_{j}\right\rvert ,\qquad\left\lVert x\right\rVert _{2}=\left(\sum_{j=1}^{d}x_{j}^{2}\right)^{1/2}.
\]
If $j\in\{1,\dots,d\}$, then $x=(x_{j},x_{-j})$ denotes the decomposition
into the $j$th coordinate and the remaining $(d-1)$ coordinates.

\begin{definition} For $d\in\mathbb{N}$ and $1\le p<\infty$, define
\[
\mathcal{D}_{p}(\mathbb{R}_{+}^{d})=\left\{ f\in L^{p}(\mathbb{R}_{+}^{d}):f\ge0\ \text{a.e.\ and }\int_{\mathbb{R}_{+}^{d}}f(x)\,\mathrm{d}x=1\right\}.
\]
Thus $\mathcal{D}_{p}(\mathbb{R}_{+}^{d})$ is the class of probability
densities on $\mathbb{R}_{+}^{d}$ that also belong to $L^{p}(\mathbb{R}_{+}^{d})$.
\end{definition}

For $m\in\mathbb{N}$ and $\beta>0$, let 
\begin{equation}
\tau_{m,\beta}(x)=\frac{\beta(\beta x)^{m-1}e^{-\beta x}}{(m-1)!},\qquad x\ge0.\label{eq:erlang-density-beta}
\end{equation}
This is the Erlang density with integer shape parameter $m$ and rate
$\beta$ (equivalently, scale $1/\beta$). When $\beta=n\in\mathbb{N}$,
we write simply 
\begin{equation}
\tau_{m,n}(x)=ne^{-nx}\frac{(nx)^{m-1}}{(m-1)!},\qquad x\ge0.\label{eq:erlang-density-n}
\end{equation}

\begin{definition} A \emph{finite multivariate Erlang mixture density
with common rate $\beta$} is a density of the form 
\begin{equation}
g(x)=\sum_{m\in F}a_{m}\prod_{j=1}^{d}\tau_{m_{j},\beta}(x_{j}),\qquad x\in\mathbb{R}_{+}^{d},\label{eq:finite-mixed-erlang}
\end{equation}
where $F\subset\mathbb{N}^{d}$ is finite, $a_{m}\ge0$, and $\sum_{m\in F}a_{m}=1$.
\end{definition}

For later use, fix $M>0$ and write $Q_{M}=[0,M]^{d}$. If $r>0$
and $f$ is bounded on $Q_{M+r}$, define the local modulus of continuity
on $Q_{M+r}$ by 
\begin{equation}
\omega_{Q_{M+r}}(f;r)=\sup\left\{\left\lvert f(y)-f(z)\right\rvert :y,z\in Q_{M+r},\ \left\lVert y-z\right\rVert _{2}\le r\right\}.\label{eq:local-modulus}
\end{equation}

For $\nu\ge0$, define the polynomial weight 
\begin{equation}
w_{\nu}(x)=(1+\left\lVert x\right\rVert _{1})^{\nu},\qquad x\in\mathbb{R}_{+}^{d},\label{eq:poly-weight}
\end{equation}
and the corresponding weighted spaces 
\begin{equation}
L_{\infty,\nu}(\mathbb{R}_{+}^{d})=\left\{ f:\mathbb{R}_{+}^{d}\to\mathbb{R}\ \text{measurable}:\left\lVert f\right\rVert _{\infty,\nu}<\infty\right\},\qquad\left\lVert f\right\rVert _{\infty,\nu}=\sup_{x\in\mathbb{R}_{+}^{d}}\frac{\left\lvert f(x)\right\rvert }{w_{\nu}(x)},\label{eq:weighted-sup-space}
\end{equation}
and, for $1\le p<\infty$ and $\eta\ge0$, 
\begin{equation}
L_{p,\eta}(\mathbb{R}_{+}^{d})=L^{p}(\mathbb{R}_{+}^{d},w_{\eta}(x)^{-1}\,\mathrm{d}x),\qquad\left\lVert f\right\rVert _{p,\eta}=\left(\int_{\mathbb{R}_{+}^{d}}\frac{\left\lvert f(x)\right\rvert ^{p}}{w_{\eta}(x)}\,\mathrm{d}x\right)^{1/p}.\label{eq:weighted-lp-space}
\end{equation}
We also write $C_{0}(\mathbb{R}_{+}^{d})$ for the space of bounded
continuous functions on $\mathbb{R}_{+}^{d}$ that vanish at infinity,
that is, satisfy $f(x)\to0$ as $\left\lVert x\right\rVert _{1}\to\infty$.

The global weighted rate statements use an operator-adapted Hölder
seminorm. For $\nu\ge0$ and $0<\alpha\le1$, define 
\begin{equation}
[f]_{\nu,\alpha,*}=\sup\left\{ \frac{\left\lvert f(y)-f(x)\right\rvert }{w_{\nu}(x)\left(\frac{\left\lVert y-x\right\rVert _{2}}{\sqrt{1+\left\lVert x\right\rVert _{1}}}\right)^{\alpha}}:x,y\in\mathbb{R}_{+}^{d},\ y\neq x\right\} .\label{eq:weighted-holder-seminorm}
\end{equation}
We also define the intrinsic weighted modulus of continuity 
\begin{equation}
\Omega_{\nu,*}(f;\delta)=\sup\left\{ \frac{\left\lvert f(y)-f(x)\right\rvert }{w_{\nu}(x)}:x,y\in\mathbb{R}_{+}^{d},\ \left\lVert y-x\right\rVert _{2}\le\delta\sqrt{1+\left\lVert x\right\rVert _{1}}\right\} ,\qquad\delta>0.\label{eq:intrinsic-modulus-def}
\end{equation}
The weight $w_{\nu}$ permits polynomial growth of order $\nu$ at
infinity. The scale $\delta\sqrt{1+\left\lVert x\right\rVert _{1}}$
in $\Omega_{\nu,*}$ is dictated by the operator itself: by Proposition~\ref{prop:prob-representation}
and Lemma~\ref{lem:displacement-moments}, the random displacement
$Y_{n,x}-x$ has second moment of order $(1+\left\lVert x\right\rVert _{1})/n$,
so the kernel centered at $x$ typically spreads over distances of
order $\sqrt{(1+\left\lVert x\right\rVert _{1})/n}$. Thus $\Omega_{\nu,*}$
measures oscillation on the natural spatial scale of the Szász--Mirakjan--Kantorovich
approximation. Weighted moduli of this type are standard in approximation
on unbounded intervals and for positive operators on the half-line;
see, for example, \cite{AltomareCappellettiLeonessa2013,DitzianTotik1987}.

The one-dimensional Szász--Mirakjan--Kantorovich operator is given
by 
\begin{equation}
(K_{n}g)(x)=n\sum_{k=0}^{\infty}e^{-nx}\frac{(nx)^{k}}{k!}\int_{k/n}^{(k+1)/n}g(t)\,\mathrm{d}t,\qquad x\ge0.\label{eq:Kn-def-main}
\end{equation}
The following theorem is the only external technical input that we
use.

\begin{theorem}[One-dimensional $L^p$ convergence; Theorem 3.5 of \cite{AltomareCappellettiLeonessa2013}]\label{thm:1d-input}
Let $1\le p<\infty$. For every $n\in\mathbb{N}$, the operator $K_{n}$
maps $L^{p}(\mathbb{R}_{+})$ into itself, is a positive linear contraction,
and satisfies $\left\lVert K_{n}g\right\rVert _{L^{p}(\mathbb{R}_{+})}\le\left\lVert g\right\rVert _{L^{p}(\mathbb{R}_{+})}$
for every $g\in L^{p}(\mathbb{R}_{+})$. Moreover, $\left\lVert K_{n}g-g\right\rVert _{L^{p}(\mathbb{R}_{+})}\to0$
as $n\to\infty$ for every $g\in L^{p}(\mathbb{R}_{+})$. \end{theorem}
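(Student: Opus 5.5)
The statement to prove is Theorem~\ref{thm:1d-input}: $K_{n}$ is a positive linear contraction on $L^{p}(\mathbb{R}_{+})$, and $K_{n}g\to g$ in $L^{p}$. The plan has three parts: positivity and linearity by inspection, the contraction bound via Jensen's inequality plus Tonelli, and convergence via a dense subclass and an $\varepsilon/3$ argument.

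\emph{Positivity and linearity.} Both are immediate from \eqref{eq:Kn-def-main}.

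\emph{Contraction.} Write
\[
(K_{n}g)(x)=\sum_{k\ge0}p_{n,k}(x)\,A_{k}g,\qquad p_{n,k}(x)=e^{-nx}\frac{(nx)^{k}}{k!},\qquad A_{k}g=n\int_{k/n}^{(k+1)/n}g(t)\,\mathrm{d}t .
\]
The weights satisfy $\sum_{k}p_{n,k}(x)=1$ for every $x$. Each $A_{k}g$ is an average of $g$ over an interval of length $1/n$.

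Jensen's inequality, applied twice (first for the Poisson weights, then for the interval average), gives
\[
|K_{n}g(x)|^{p}\le\sum_{k}p_{n,k}(x)\,|A_{k}g|^{p}\le\sum_{k}p_{n,k}(x)\,n\int_{k/n}^{(k+1)/n}|g|^{p}.
\]
Next integrate in $x$ and use Tonelli. The key identity is $\int_{0}^{\infty}p_{n,k}(x)\,\mathrm{d}x=1/n$, which is just $\tau_{k+1,n}$ being a probability density. This yields $\|K_{n}g\|_{p}^{p}\le\sum_{k}\int_{k/n}^{(k+1)/n}|g|^{p}=\|g\|_{p}^{p}$. Measurability and absolute convergence of the series follow from the same bound.

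\emph{Convergence.} Since the operators are uniformly bounded by the contraction step, the standard argument is
\[
\|K_{n}g-g\|_{p}\le\|K_{n}(g-h)\|_{p}+\|K_{n}h-h\|_{p}+\|h-g\|_{p}\le2\|g-h\|_{p}+\|K_{n}h-h\|_{p}.
\]
It therefore suffices to prove convergence for $h$ in the dense class $C_{c}(\mathbb{R}_{+})$. Fix such an $h$, supported in $[0,R]$ and with modulus of continuity $\omega$.

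\emph{Pointwise bound.} Write $K_{n}h(x)-h(x)=\sum_{k}p_{n,k}(x)\,n\int_{k/n}^{(k+1)/n}(h(t)-h(x))\,\mathrm{d}t$. Probabilistically, this is $\mathbb{E}[h(Y)-h(x)]$ with $Y=(N+U)/n$, where $N\sim\mathrm{Poisson}(nx)$ and $U\sim\mathrm{Unif}[0,1]$ is independent of $N$. One computes
\[
\mathbb{E}(Y-x)^{2}=\frac{x}{n}+\frac{1}{3n^{2}}.
\]
Splitting on the event $\{|Y-x|\le\delta\}$ and its complement, and using Chebyshev on the complement, gives
\[
|K_{n}h(x)-h(x)|\le\omega(\delta)+2\|h\|_{\infty}\,\frac{x/n+1/(3n^{2})}{\delta^{2}}.
\]
Choosing $\delta=n^{-1/4}$ makes this tend to $0$ uniformly on $[0,2R]$. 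Hence the $L^{p}([0,2R])$ norm of $K_{n}h-h$ tends to $0$.

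\emph{Tail bound (the main obstacle).} For $x>2R$ we have $h(x)=0$, so only $K_{n}h(x)$ needs controlling. Since $h$ vanishes beyond $R$, only the terms with $k\le nR$ contribute, and
\[
|K_{n}h(x)|\le\|h\|_{\infty}\,\mathbb{P}(N\le nR),\qquad N\sim\mathrm{Poisson}(nx).
\]
A Chernoff bound gives $\mathbb{P}(N\le nR)\le e^{-cnx}$ for $x\ge2R$, with some $c>0$. Its $L^{p}((2R,\infty))$ norm is therefore $O\bigl((np)^{-1/p}\bigr)\to0$. For a cruder alternative, one can integrate $\sum_{k\le nR}p_{n,k}(x)^{p}\le\sum_{k\le nR}p_{n,k}(x)$ over $(2R,\infty)$ and again invoke Chebyshev.

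Combining the compact-window estimate with the tail estimate gives $\|K_{n}h-h\|_{p}\to0$ for $h\in C_{c}(\mathbb{R}_{+})$. The $\varepsilon/3$ argument then extends this to all $g\in L^{p}(\mathbb{R}_{+})$, completing the proof.
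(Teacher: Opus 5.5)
Your proof is correct. The contraction step (Jensen twice, then Tonelli with $\int_0^\infty p_{n,k}(x)\,\mathrm{d}x=1/n$) is exactly the paper's argument.

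The routes differ on the convergence assertion. The paper does not prove it; it quotes Theorem~3.5 of \cite{AltomareCappellettiLeonessa2013} with $a_n=0$, $b_n=1$. You prove it directly, using uniform boundedness plus density of $C_c(\mathbb{R}_+)$:
\begin{itemize}
\item Your compact-window estimate is the $d=1$ case of Theorem~\ref{thm:compact-modulus}. It relies on Proposition~\ref{prop:prob-representation} and Lemma~\ref{lem:displacement-moments}, neither of which depends on Theorem~\ref{thm:1d-input}, so there is no circularity.
\item Your Chernoff bound supplies the one genuinely new ingredient, namely that no mass escapes to infinity. Its constant is independent of $n$, because $x-R-R\log(x/R)\ge\frac{1-\log 2}{2}\,x$ for $x\ge 2R$.
\end{itemize}
Your route removes what the paper calls its ``only external technical input'' and makes the development self-contained, using machinery the paper builds anyway. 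The citation buys brevity and places $K_n$ inside a more general operator family.

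One caution concerns your ``cruder alternative'' for the tail, which your main argument does not need. The inequality you want there is $|K_nh(x)|^p\le\|h\|_\infty^p\sum_{k<nR}p_{n,k}(x)$, which follows for instance from Jensen. A bound on $\sum_k p_{n,k}(x)^p$ does not control $|K_nh(x)|^p$. After that reduction, Chebyshev for the Poisson count at fixed $x$ gives only $O(1/(nx))$, which is not integrable on $(2R,\infty)$. To make the alternative work, integrate in $x$ first:
\[
\int_{2R}^\infty p_{n,k}(x)\,\mathrm{d}x=\tfrac1n\,\mathbb{P}(G_{k+1}>2R),
\]
where $G_{k+1}$ is Erlang with shape $k+1$ and rate $n$. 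Then apply Chebyshev to $G_{k+1}$; summing over $k<nR$ gives $O(1/n)$. The Chernoff route needs no such repair.
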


\begin{proposition}[One-dimensional Erlang mixture representation]\label{prop:1d-representation}
Let $1\le p<\infty$ and $f\in\mathcal{D}_{p}(\mathbb{R}_{+})$. For
each $n\in\mathbb{N}$, define 
\begin{equation}
a_{m,n}=\int_{(m-1)/n}^{m/n}f(t)\,\mathrm{d}t,\qquad m\in\mathbb{N}.\label{eq:1d-cell-mass}
\end{equation}
Then $a_{m,n}\ge0$, $\sum_{m=1}^{\infty}a_{m,n}=1$, and 
\begin{equation}
(K_{n}f)(x)=\sum_{m=1}^{\infty}a_{m,n}\,\tau_{m,n}(x),\qquad x\ge0.\label{eq:1d-mixed-erlang-representation}
\end{equation}
Hence $K_{n}f$ is a countable Erlang mixture density with common
scale $1/n$. \end{proposition}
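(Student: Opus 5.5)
The plan is a direct computation from the definition \eqref{eq:Kn-def-main}, reindexed by $m=k+1$. First, $a_{m,n}\ge0$ because $f\ge0$ almost everywhere. Next, the intervals $[(m-1)/n,m/n)$, $m\in\mathbb{N}$, partition $\mathbb{R}_{+}$. Since $f$ is integrable with $\int_{\mathbb{R}_{+}} f=1$, countable additivity (or monotone convergence) gives $\sum_{m\ge1}a_{m,n}=1$.

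For the representation, substitute $k=m-1$ in \eqref{eq:Kn-def-main}:
\[
(K_nf)(x)=\sum_{m=1}^{\infty} n e^{-nx}\frac{(nx)^{m-1}}{(m-1)!}\int_{(m-1)/n}^{m/n}f(t)\,\mathrm{d}t=\sum_{m=1}^{\infty}a_{m,n}\,\tau_{m,n}(x).
\]
This uses only the definition \eqref{eq:erlang-density-n} of $\tau_{m,n}$. Every term is nonnegative and $a_{m,n}\le1$, so the series converges for each $x\ge0$, bounded by $n e^{-nx}\sum_k (nx)^k/k!=n$. Hence no convergence issue arises, and the formula holds pointwise for every $x\ge0$.

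It remains to check that $K_nf$ is a probability density. It is nonnegative. By Tonelli's theorem, which applies since all terms are nonnegative, and since each $\tau_{m,n}$ integrates to $1$ (a Gamma integral, $\int_0^\infty n (nx)^{m-1}e^{-nx}\,\mathrm{d}x/(m-1)!=1$),
\[
\int_0^\infty K_nf=\sum_{m\ge1}a_{m,n}=1.
\]
Membership in $L^p$ follows from Theorem~\ref{thm:1d-input}, although it is not needed for the mixture statement itself. The proof involves no real obstacle; the only care needed is in justifying the interchange of sum and integral, and Tonelli's theorem handles that.
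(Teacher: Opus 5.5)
Your proposal is correct and follows essentially the same route as the paper: nonnegativity from $f\ge0$, $\sum_{m}a_{m,n}=1$ from the partition of $\mathbb{R}_{+}$ into the cells $[(m-1)/n,m/n)$, and the representation by re-indexing $m=k+1$ in \eqref{eq:Kn-def-main}. Your added Tonelli check that $\int_0^\infty K_nf=1$, together with the pointwise bound by $n$, simply makes explicit what the paper leaves implicit in the phrase ``countable Erlang mixture density''.
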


For $d\in\mathbb{N}$ and $j\in\{1,\dots,d\}$, define the coordinatewise
lifted operator 
\begin{equation}
(K_{n,j}f)(x)=n\sum_{k=0}^{\infty}e^{-nx_{j}}\frac{(nx_{j})^{k}}{k!}\int_{k/n}^{(k+1)/n}f(x_{1},\dots,x_{j-1},t,x_{j+1},\dots,x_{d})\,\mathrm{d}t.\label{eq:Knj-def-main}
\end{equation}
We then define the tensorized operator 
\begin{equation}
\mathcal{K}_{n}^{(d)}=K_{n,1}\circ K_{n,2}\circ\cdots\circ K_{n,d}.\label{eq:Kd-def-main}
\end{equation}

The same operator admits a useful probabilistic representation.

\begin{proposition}[A probabilistic representation]\label{prop:prob-representation}
Fix $d\in\mathbb{N}$, $n\in\mathbb{N}$, and $x=(x_{1},\dots,x_{d})\in\mathbb{R}_{+}^{d}$.
Let $N_{1},\dots,N_{d}$ be independent Poisson random variables with
$N_{j}\sim\mathrm{Poisson}(nx_{j})$, and let $U_{1},\dots,U_{d}$
be independent ${\rm Unif}[0,1]$ random variables, independent of
the $N_{j}$. Define $Y_{n,x,j}=(N_{j}+U_{j})/n$ for $j=1,\dots,d$
and write $Y_{n,x}=(Y_{n,x,1},\dots,Y_{n,x,d})$. Then, for every
measurable and locally integrable $f:\mathbb{R}_{+}^{d}\to\mathbb{R}$
such that $\mathbb{E}[|f(Y_{n,x})|]<\infty$,
\begin{equation}
(\mathcal{K}_{n}^{(d)}f)(x)=\mathbb{E}[f(Y_{n,x})].\label{eq:prob-representation-main}
\end{equation}
\end{proposition}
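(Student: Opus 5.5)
The plan is to unfold the composition $\mathcal{K}_{n}^{(d)}=K_{n,1}\circ\cdots\circ K_{n,d}$ into a single multiple sum, and then read that sum as an expectation over independent Poisson and uniform variables.

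First I would rewrite each one-dimensional lifted operator probabilistically. The weights $e^{-nx_{j}}(nx_{j})^{k}/k!$ are $\mathbb{P}(N_{j}=k)$. After the substitution $t=(k+u)/n$, the quantity $n\int_{k/n}^{(k+1)/n}f(\dots,t,\dots)\,\mathrm{d}t$ becomes $\int_{0}^{1}f(\dots,(k+u)/n,\dots)\,\mathrm{d}u$. Hence, formally,
\[
(K_{n,j}f)(x)=\mathbb{E}\bigl[f(x_{1},\dots,x_{j-1},Y_{n,x,j},x_{j+1},\dots,x_{d})\bigr].
\]

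Next I would iterate this formula. Applying $K_{n,d}$ first gives a function $f_{d}(x)$ depending on $x_{d}$ only through the law of $Y_{n,x,d}$. Applying $K_{n,d-1}$ to $f_{d}$ then replaces $x_{d-1}$ by an independent copy. Doing this for each coordinate, and using that $x_{j}$ enters only the $j$th Poisson parameter, yields the explicit formula
\[
(\mathcal{K}_{n}^{(d)}f)(x)=\sum_{k\in\mathbb{N}_{0}^{d}}\prod_{j=1}^{d}\Bigl(e^{-nx_{j}}\frac{(nx_{j})^{k_{j}}}{k_{j}!}\Bigr)\,n^{d}\int_{\prod_{j}[k_{j}/n,(k_{j}+1)/n]}f(t)\,\mathrm{d}t .
\]
The right-hand side equals $\mathbb{E}[f(Y_{n,x})]$ by conditioning on $(N_{1},\dots,N_{d})$. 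Given $N=k$, the vector $Y_{n,x}$ is uniform on the cell $\prod_{j}[k_{j}/n,(k_{j}+1)/n]$, whose density is $n^{d}$ there; this uses independence of $N$ and $U$.

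The main obstacle is justifying the interchange of sums and integrals when passing through the iterated composition. Each intermediate function $K_{n,j+1}\circ\cdots\circ K_{n,d}f$ must be well defined, meaning its series converges absolutely and the inner integrals are finite. I would handle this by running the whole argument first for $|f|$, where Tonelli applies to the nonnegative terms with no integrability assumption. That computation shows the iterated expression for $|f|$ equals $\mathbb{E}[|f(Y_{n,x})|]<\infty$. For general $f$, I would split $f=f^{+}-f^{-}$; both parts have finite iterated values, so Fubini justifies every exchange. Local integrability ensures each cell integral is defined.

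One subtlety remains: an intermediate function could be infinite on a null set of the remaining coordinates. This does not affect the outer integrals, since those are integrals in $t$ of the intermediate function. I would note this measure-zero issue explicitly, so that the composition is interpreted at the given point $x$ via the iterated integrals.
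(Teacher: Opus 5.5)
Your proposal is correct and is essentially the paper's argument. Both reduce $(\mathcal{K}_{n}^{(d)}f)(x)$ to the cell expansion $n^{d}\sum_{k\in\mathbb{N}_{0}^{d}}\prod_{j=1}^{d}e^{-nx_{j}}\frac{(nx_{j})^{k_{j}}}{k_{j}!}\int_{\prod_{j}[k_{j}/n,(k_{j}+1)/n)}f(t)\,\mathrm{d}t$, then identify it with $\mathbb{E}[f(Y_{n,x})]$ by conditioning on $N$ and substituting $t=(k+u)/n$, with Fubini licensed by $\mathbb{E}[|f(Y_{n,x})|]<\infty$. The paper runs the argument from the expectation side and simply asserts that this expansion is what iterating the one-dimensional definitions produces. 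Your Tonelli-for-$|f|$-then-$f^{\pm}$ handling of the intermediate functions, including the null-set caveat, actually justifies that step.
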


\begin{lemma}[Moments of the displacement]\label{lem:displacement-moments}
Let $\Delta_{n,x}=Y_{n,x}-x$. Then, for every $x\in\mathbb{R}_{+}^{d}$,
\begin{equation}
\mathbb{E}\left[\left\lVert \Delta_{n,x}\right\rVert _{2}^{2}\right]=\frac{\left\lVert x\right\rVert _{1}}{n}+\frac{d}{3n^{2}}.\label{eq:second-moment-displacement-main}
\end{equation}
Consequently, for every $0<r\le2$, 
\begin{equation}
\mathbb{E}\left[\left\lVert \Delta_{n,x}\right\rVert _{2}^{r}\right]\le C_{r,d}\left(\frac{1+\left\lVert x\right\rVert _{1}}{n}\right)^{r/2},\label{eq:r-moment-displacement-main}
\end{equation}
where one may take $C_{r,d}=(1+d/3)^{r/2}$. Finally, for every $\eta>0$,
\begin{equation}
\mathbb{P}\left(\left\lVert \Delta_{n,x}\right\rVert _{2}>\eta\right)\le\frac{1}{\eta^{2}}\left(\frac{\left\lVert x\right\rVert _{1}}{n}+\frac{d}{3n^{2}}\right).\label{eq:tail-displacement-main}
\end{equation}
\end{lemma}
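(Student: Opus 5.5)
We compute the second moment of $\Delta_{n,x}$ coordinate by coordinate, using the independence built into Proposition~\ref{prop:prob-representation}. Write $\Delta_{n,x,j}=Y_{n,x,j}-x_{j}=(N_{j}-nx_{j})/n+U_{j}/n$. Since $N_{j}$ and $U_{j}$ are independent, the mean and variance add, and
\[
\mathbb{E}[\Delta_{n,x,j}^{2}]=\mathrm{Var}(\Delta_{n,x,j})+\bigl(\mathbb{E}[\Delta_{n,x,j}]\bigr)^{2}.
\]
Using $\mathbb{E}[N_{j}]=\mathrm{Var}(N_{j})=nx_{j}$, $\mathbb{E}[U_{j}]=1/2$ and $\mathrm{Var}(U_{j})=1/12$, we get $\mathbb{E}[\Delta_{n,x,j}]=1/(2n)$ and $\mathrm{Var}(\Delta_{n,x,j})=x_{j}/n+1/(12n^{2})$. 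Hence
\[
\mathbb{E}[\Delta_{n,x,j}^{2}]=\frac{x_{j}}{n}+\frac{1}{12n^{2}}+\frac{1}{4n^{2}}=\frac{x_{j}}{n}+\frac{1}{3n^{2}}.
\]
An equivalent check: condition on $N_{j}$, so that $\mathbb{E}[(N_{j}-nx_{j}+U_{j})^{2}\mid N_{j}]=(N_{j}-nx_{j})^{2}+(N_{j}-nx_{j})+1/3$, then take expectations. Summing over $j$ and using $x_{j}\ge0$, so that $\sum_{j}x_{j}=\left\lVert x\right\rVert_{1}$, gives \eqref{eq:second-moment-displacement-main}.

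For \eqref{eq:r-moment-displacement-main}, take $0<r\le2$. The map $t\mapsto t^{r/2}$ is concave, so Jensen's inequality (equivalently Lyapunov's inequality) gives
\[
\mathbb{E}\bigl[\left\lVert \Delta_{n,x}\right\rVert_{2}^{r}\bigr]\le\bigl(\mathbb{E}\bigl[\left\lVert \Delta_{n,x}\right\rVert_{2}^{2}\bigr]\bigr)^{r/2}.
\]
Since $n\ge1$ we have $d/(3n^{2})\le d/(3n)$, so
\[
\frac{\left\lVert x\right\rVert_{1}}{n}+\frac{d}{3n^{2}}\le\Bigl(1+\frac{d}{3}\Bigr)\frac{1+\left\lVert x\right\rVert_{1}}{n}.
\]
Raising to the power $r/2$ yields the constant $C_{r,d}=(1+d/3)^{r/2}$.

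The tail bound \eqref{eq:tail-displacement-main} is Markov's inequality applied to $\left\lVert \Delta_{n,x}\right\rVert_{2}^{2}$ at level $\eta^{2}$, combined with the exact second moment.

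No step is a real obstacle. The only point needing care is the bias term $1/(2n)$ contributed by $U_{j}$: it has to be included so that the constant comes out exactly as $d/(3n^{2})$ and not $d/(12n^{2})$. The case $x_{j}=0$ also needs no separate treatment, since $N_{j}=0$ almost surely is consistent with the Poisson formulas.
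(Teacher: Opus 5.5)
Your proposal is correct and follows essentially the same route as the paper. The paper's proof is the same per-coordinate bias--variance computation giving $x_j/n+1/(3n^2)$, then Jensen's inequality for the concave map $t\mapsto t^{r/2}$ (with $n\ge1$ for the constant $(1+d/3)^{r/2}$), and Chebyshev's (equivalently, your Markov-on-the-square) inequality for the tail bound.
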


\begin{lemma}[Weighted moments of $Y_{n,x}$]\label{lem:weighted-moments}
Let $\nu\ge0$. Then there exists a constant $A_{\nu,d}>0$ such that
\begin{equation}
\mathbb{E}[w_{\nu}(Y_{n,x})]\le A_{\nu,d}\,w_{\nu}(x)\qquad(x\in\mathbb{R}_{+}^{d},\ n\in\mathbb{N}).\label{eq:weighted-moment-bound-main}
\end{equation}
Consequently, 
\begin{equation}
\left\lVert \mathcal{K}_{n}^{(d)}f\right\rVert _{\infty,\nu}\le A_{\nu,d}\,\left\lVert f\right\rVert _{\infty,\nu}\qquad(f\in L_{\infty,\nu}(\mathbb{R}_{+}^{d}),\ n\in\mathbb{N}).\label{eq:weighted-operator-bound-main}
\end{equation}
\end{lemma}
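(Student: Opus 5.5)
We need to bound $\mathbb{E}[(1+\|Y_{n,x}\|_1)^\nu]$ by a constant multiple of $(1+\|x\|_1)^\nu$. Write $S=\|Y_{n,x}\|_1=\sum_j (N_j+U_j)/n$. Since the $N_j$ are independent Poisson, $N:=\sum_j N_j\sim\mathrm{Poisson}(n\|x\|_1)$, and $\sum_j U_j\le d$. Hence
\[
1+\|Y_{n,x}\|_1\le 1+\frac{d}{n}+\frac{N}{n}\le (1+d)\Bigl(1+\frac{N}{n}\Bigr).
\]
So it suffices to show that $\mathbb{E}[(1+N/n)^\nu]\le B_\nu(1+\|x\|_1)^\nu$ for $N\sim\mathrm{Poisson}(n s)$ with $s=\|x\|_1$, uniformly in $n\in\mathbb{N}$ and $s\ge0$.

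For $\nu\le1$, Jensen's inequality (concavity of $t\mapsto t^\nu$) gives $\mathbb{E}[(1+N/n)^\nu]\le(1+s)^\nu$ directly. For $\nu>1$, I would first dominate by an integer exponent: take $k=\lceil\nu\rceil$ and use Lyapunov, $\mathbb{E}[Z^\nu]\le(\mathbb{E}[Z^k])^{\nu/k}$ with $Z=1+N/n$. It then suffices to prove $\mathbb{E}[Z^k]\le B_k(1+s)^k$.

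Expanding binomially, $\mathbb{E}[Z^k]=\sum_{i=0}^k\binom{k}{i}\mathbb{E}[N^i]/n^i$. For the Poisson moments I use the Touchard representation $\mathbb{E}[N^i]=\sum_{l=0}^{i}S(i,l)(ns)^l$, with $S(i,l)$ the Stirling numbers of the second kind (and $S(i,0)=0$ for $i\ge1$). Therefore
\[
\frac{\mathbb{E}[N^i]}{n^i}=\sum_{l=1}^{i}S(i,l)\,s^l n^{l-i}\le\sum_{l=1}^{i}S(i,l)\,s^l\le B_i\,(1+s)^i,
\]
using $n\ge1$, where $B_i$ is the Bell number. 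This is the key step, and the only place where the restriction $n\ge1$ is used: the lower-order Poisson terms carry nonpositive powers of $n$, so they stay bounded uniformly in $n$. Summing over $i$ gives $\mathbb{E}[Z^k]\le C_k(1+s)^k$, and combining the steps yields the first claim with
\[
A_{\nu,d}=(1+d)^\nu\,C_{\lceil\nu\rceil}^{\nu/\lceil\nu\rceil}.
\]

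The operator bound then follows from Proposition~\ref{prop:prob-representation}. If $f\in L_{\infty,\nu}$, then $|f(y)|\le\|f\|_{\infty,\nu}w_\nu(y)$ for every $y$, so $\mathbb{E}|f(Y_{n,x})|<\infty$ by the first part and the representation applies. Hence
\[
|(\mathcal{K}_n^{(d)}f)(x)|\le\mathbb{E}|f(Y_{n,x})|\le\|f\|_{\infty,\nu}\,A_{\nu,d}\,w_\nu(x).
\]
Dividing by $w_\nu(x)$ and taking the supremum over $x$ gives the second claim.

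The local integrability hypothesis in Proposition~\ref{prop:prob-representation} needs one check: $f$ is only assumed measurable with the weighted bound, and $w_\nu$ is bounded on compacts, so $f$ is locally bounded and hence locally integrable. I expect no serious obstacle beyond tracking constants in the Poisson moment bound.
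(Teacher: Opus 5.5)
Your proof is correct and follows essentially the same route as the paper. Both arguments aggregate the Poissons into $\Sigma_n\sim\mathrm{Poisson}(n\|x\|_1)$, absorb the uniforms into a $d$-dependent factor, pass to the integer moment $\lceil\nu\rceil$ via Lyapunov, bound that moment with the Touchard polynomial using $n\ge1$, and get the operator bound from Proposition~\ref{prop:prob-representation}. The differences are cosmetic: you treat $\nu\le1$ separately by Jensen and do explicit Stirling/Bell-number bookkeeping, whereas the paper uses the cruder bound $\mathbb{E}[\Sigma_n^m]\le B_m(1+nS_x)^m$ together with $1+nS_x\le n(1+S_x)$.
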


\begin{remark}
Observe that (\ref{eq:prob-representation-main}) holds for every bounded measurable $f$
and, by Lemma~\ref{lem:weighted-moments}, for every $f\in L_{\infty,\nu}(\mathbb{R}_{+}^{d})$,
$\nu\ge0$. 
\end{remark}

\section{Qualitative approximation theorems}

\label{sec:qualitative}

We begin with the basic $L^{p}$ approximation theorem for the tensorized
operator.

\begin{theorem}[Tensorized $L^p$ convergence]\label{thm:tensor-convergence}
Let $d\in\mathbb{N}$ and $1\le p<\infty$. Then, for every $f\in L^{p}(\mathbb{R}_{+}^{d})$,
\[
\left\lVert \mathcal{K}_{n}^{(d)}f-f\right\rVert _{L^{p}(\mathbb{R}_{+}^{d})}\longrightarrow0\qquad(n\to\infty).
\]
Moreover, for every $n\in\mathbb{N}$, the operator $\mathcal{K}_{n}^{(d)}$
is a positive linear contraction on $L^{p}(\mathbb{R}_{+}^{d})$.
\end{theorem}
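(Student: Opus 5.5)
The proof has two parts. First, each lifted operator $K_{n,j}$ is a positive linear contraction on $L^{p}(\mathbb{R}_{+}^{d})$. Second, a telescoping decomposition combined with the uniform contraction bound reduces $d$-dimensional convergence to one-dimensional convergence in a single coordinate, uniformly in the other coordinates in an integrated sense.

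For the contraction property, fix $j$ and $f\in L^{p}(\mathbb{R}_{+}^{d})$. By Fubini--Tonelli, for almost every $x_{-j}\in\mathbb{R}_{+}^{d-1}$ the section $g_{x_{-j}}(t)=f(t,x_{-j})$ (placing $t$ in the $j$th slot) lies in $L^{p}(\mathbb{R}_{+})$. By definition \eqref{eq:Knj-def-main}, $(K_{n,j}f)(x)=(K_{n}g_{x_{-j}})(x_{j})$. Theorem~\ref{thm:1d-input} then gives $\int|K_{n}g_{x_{-j}}|^{p}\,\mathrm{d}x_{j}\le\int|g_{x_{-j}}|^{p}\,\mathrm{d}x_{j}$. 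Integrating over $x_{-j}$ yields $\|K_{n,j}f\|_{L^{p}}\le\|f\|_{L^{p}}$. A small measurability point needs checking here, namely that $K_{n,j}f$ is jointly measurable. I would settle this by writing it as a countable sum of products of the continuous functions $e^{-nx_{j}}(nx_{j})^{k}/k!$ with the measurable functions $x_{-j}\mapsto\int_{k/n}^{(k+1)/n}f\,\mathrm{d}t$. Positivity and linearity are immediate. Composing gives that $\mathcal{K}_{n}^{(d)}$ is a positive linear contraction.

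For convergence, I would first show $\|K_{n,j}f-f\|_{L^{p}}\to0$ for each $j$. Apply the same slicing to get
\[
\|K_{n,j}f-f\|_{L^{p}}^{p}=\int_{\mathbb{R}_{+}^{d-1}}\|K_{n}g_{x_{-j}}-g_{x_{-j}}\|_{L^{p}(\mathbb{R}_{+})}^{p}\,\mathrm{d}x_{-j}.
\]
The integrand tends to zero for a.e. $x_{-j}$ by Theorem~\ref{thm:1d-input}. It is dominated by $2^{p}\|g_{x_{-j}}\|_{L^{p}}^{p}$, using the contraction property, and this bound is integrable with integral $2^{p}\|f\|_{L^{p}}^{p}$. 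Dominated convergence therefore gives the claim. I expect this step to be the main (if mild) obstacle, because it is where the pointwise-in-slice one-dimensional theorem must be upgraded to a $d$-dimensional statement. Dominated convergence handles it cleanly, so no density argument is required.

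Finally, I would telescope:
\[
\mathcal{K}_{n}^{(d)}f-f=\sum_{j=1}^{d}K_{n,1}\circ\cdots\circ K_{n,j-1}\,(K_{n,j}f-f),
\]
where the empty composition for $j=1$ is the identity. Each prefix composition is a contraction, so $\|\mathcal{K}_{n}^{(d)}f-f\|_{L^{p}}\le\sum_{j=1}^{d}\|K_{n,j}f-f\|_{L^{p}}\to0$, which completes the proof.
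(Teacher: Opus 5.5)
Your proposal is correct and follows the paper's proof essentially step for step. The paper uses the same three steps: slicewise contraction via Fubini and Theorem~\ref{thm:1d-input}, coordinatewise convergence via dominated convergence with the majorant $2^{p}\|f_{x_{-j}}\|_{L^{p}(\mathbb{R}_{+})}^{p}$, and the same telescoping identity with contraction prefixes (your remark on joint measurability of $K_{n,j}f$ is a detail the paper leaves implicit).
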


The next proposition identifies the tensorized operator with a countable
Erlang mixture density.

\begin{proposition}[Multivariate Erlang mixture formula]\label{prop:explicit-mixed-erlang}
Let $d\in\mathbb{N}$, let $1\le p<\infty$, and let $f\in\mathcal{D}_{p}(\mathbb{R}_{+}^{d})$.
For $m=(m_{1},\dots,m_{d})\in\mathbb{N}^{d}$ and $n\in\mathbb{N}$,
define 
\begin{equation}
Q_{m,n}=\prod_{j=1}^{d}\left[\frac{m_{j}-1}{n},\frac{m_{j}}{n}\right)\label{eq:cells-main}
\end{equation}
and the corresponding cell mass 
\begin{equation}
a_{m,n}=\int_{Q_{m,n}}f(t)\,\mathrm{d}t.\label{eq:cell-masses-main}
\end{equation}
Then $a_{m,n}\ge0$, $\sum_{m\in\mathbb{N}^{d}}a_{m,n}=1$, and 
\begin{equation}
(\mathcal{K}_{n}^{(d)}f)(x)=\sum_{m\in\mathbb{N}^{d}}a_{m,n}\prod_{j=1}^{d}\tau_{m_{j},n}(x_{j}),\qquad x\in\mathbb{R}_{+}^{d}.\label{eq:multivariate-representation-main}
\end{equation}
Consequently, $\mathcal{K}_{n}^{(d)}f$ is a countable multivariate
Erlang mixture density with common rate $\beta_{n}=n$ (equivalently, common
scale $1/n$). \end{proposition}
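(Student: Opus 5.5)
Rather than composing the $d$ one-dimensional operators directly, we start from the probabilistic representation of Proposition~\ref{prop:prob-representation} and expand the expectation. Fix $x\in\mathbb{R}_{+}^{d}$. For $k=(k_{1},\dots,k_{d})\in\mathbb{N}_{0}^{d}$, the event $\{N=k\}$ has probability $\prod_{j}e^{-nx_{j}}(nx_{j})^{k_{j}}/k_{j}!$. Conditionally on this event, $Y_{n,x}$ is uniform on the cell $\prod_{j}[k_{j}/n,(k_{j}+1)/n)$, and its density there is $n^{d}$. Set $m=k+\mathbf{1}$, so that this cell is $Q_{m,n}$. We then get
\[
(\mathcal{K}_{n}^{(d)}f)(x)=\sum_{m\in\mathbb{N}^{d}}\Bigl(\prod_{j=1}^{d}ne^{-nx_{j}}\frac{(nx_{j})^{m_{j}-1}}{(m_{j}-1)!}\Bigr)\int_{Q_{m,n}}f(t)\,\mathrm{d}t ,
\]
and the bracketed product is exactly $\prod_{j}\tau_{m_{j},n}(x_{j})$ by (\ref{eq:erlang-density-n}). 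This gives (\ref{eq:multivariate-representation-main}).

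Proposition~\ref{prop:prob-representation} requires $\mathbb{E}|f(Y_{n,x})|<\infty$. An $f\in\mathcal{D}_{p}$ need not be bounded, so this needs a short argument. Since $f\ge0$, we avoid that hypothesis altogether. We apply the iterated definition (\ref{eq:Kd-def-main}) directly, expanding $K_{n,d}$ first, then $K_{n,d-1}$, and so on. At each stage only nonnegative series and integrals appear, so Tonelli's theorem lets us swap the sums with the one-dimensional integrals. The nested integrals then combine into a single integral over $Q_{m,n}$ by Fubini--Tonelli. An induction on the number of applied factors $K_{n,j}$ makes this precise: after applying $K_{n,j},\dots,K_{n,d}$, the function equals
\[
\sum_{m_{j},\dots,m_{d}}\prod_{i\ge j}\tau_{m_{i},n}(x_{i})\int_{\prod_{i\ge j}[(m_{i}-1)/n,m_{i}/n)}f(x_{1},\dots,x_{j-1},t_{j},\dots,t_{d})\,\mathrm{d}t_{j}\cdots\mathrm{d}t_{d}.
\]
This identity holds in $[0,\infty]$ for every $x$, or for a.e.\ $x$ where the sections of $f$ are integrable. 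The identity is pointwise in $x$, and the right-hand side is finite, as the next step shows. Thus the representation holds with finite values everywhere.

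It remains to check the mixing weights. They satisfy $a_{m,n}\ge0$ because $f\ge0$ a.e. The cells $Q_{m,n}$, $m\in\mathbb{N}^{d}$, partition $\mathbb{R}_{+}^{d}$ up to a null set, so countable additivity gives $\sum_{m}a_{m,n}=\int f=1$. Each term $\prod_{j}\tau_{m_{j},n}(x_{j})\le n^{d}$, because the Poisson probabilities are at most $1$. Hence the series converges, and is bounded by $n^{d}$ uniformly in $x$. Each $\prod_{j}\tau_{m_{j},n}$ is a probability density on $\mathbb{R}_{+}^{d}$. Integrating termwise by Tonelli therefore shows that $\mathcal{K}_{n}^{(d)}f$ integrates to $\sum_{m}a_{m,n}=1$. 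So it is a countable multivariate Erlang mixture density with common rate $n$.

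The only delicate point is the measure-theoretic bookkeeping in the induction: each $K_{n,j}$ must be well defined on sections of $f$, and the iterated integrals must merge into $\int_{Q_{m,n}}$. Nonnegativity together with Tonelli should settle this without further effort.
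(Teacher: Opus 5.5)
Your proof is correct and, once the opening probabilistic paragraph is set aside in favour of the direct iterated expansion, it is essentially the paper's argument. That argument applies Tonelli repeatedly to $K_{n,1}\circ\cdots\circ K_{n,d}$ for nonnegative $f$, merges the nested integrals into $\int_{Q_{m,n}}f(t)\,\mathrm{d}t$, and re-indexes with $m=k+\mathbf{1}$.

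The probabilistic detour is unnecessary, though it would also go through: for $f\ge0$, Tonelli already gives $\mathbb{E}[f(Y_{n,x})]=\sum_{m}a_{m,n}\prod_{j}\tau_{m_{j},n}(x_{j})\le n^{d}$. Your explicit checks of the uniform bound $n^{d}$ and of total mass $1$ go slightly beyond the paper's terser write-up.
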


\begin{corollary}[Countable Erlang mixture approximation]\label{cor:countable}
Let $d\in\mathbb{N}$ and $1\le p<\infty$. For every $f\in\mathcal{D}_{p}(\mathbb{R}_{+}^{d})$,
the sequence $\mathcal{K}_{n}^{(d)}f$ consists of countable multivariate
Erlang mixture densities with common scale parameter $1/n$ and satisfies
\[
\left\lVert \mathcal{K}_{n}^{(d)}f-f\right\rVert _{L^{p}(\mathbb{R}_{+}^{d})}\to0.
\]
\end{corollary}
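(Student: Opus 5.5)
This corollary combines two results already stated: Proposition~\ref{prop:explicit-mixed-erlang} and Theorem~\ref{thm:tensor-convergence}. Fix $f\in\mathcal{D}_{p}(\mathbb{R}_{+}^{d})$ and $n\in\mathbb{N}$. The first claim concerns the structure of $\mathcal{K}_{n}^{(d)}f$, and Proposition~\ref{prop:explicit-mixed-erlang} supplies it directly. The cell masses $a_{m,n}=\int_{Q_{m,n}}f$ are nonnegative because $f\ge0$ a.e. They sum to one because the half-open cells $Q_{m,n}$, $m\in\mathbb{N}^{d}$, partition $\mathbb{R}_{+}^{d}$ and $\int f=1$. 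Formula (\ref{eq:multivariate-representation-main}) then writes $\mathcal{K}_{n}^{(d)}f$ as $\sum_{m}a_{m,n}\prod_{j}\tau_{m_{j},n}(x_{j})$. This is a countable mixture of product Erlang densities, all with rate $n$, so all with common scale $1/n$.

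Each product $\prod_{j}\tau_{m_{j},n}$ is itself a probability density on $\mathbb{R}_{+}^{d}$. By Tonelli's theorem, $\mathcal{K}_{n}^{(d)}f$ is therefore a genuine probability density, nonnegative with total mass one.

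For the convergence claim, note that $f\in\mathcal{D}_{p}(\mathbb{R}_{+}^{d})\subset L^{p}(\mathbb{R}_{+}^{d})$. Theorem~\ref{thm:tensor-convergence} then gives $\|\mathcal{K}_{n}^{(d)}f-f\|_{L^{p}}\to0$ as $n\to\infty$ with no further work.

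The one point to check is that the two results describe the same object. The operator in Theorem~\ref{thm:tensor-convergence} acts on $L^{p}$ classes, while the series in Proposition~\ref{prop:explicit-mixed-erlang} is a pointwise formula. Both are given by the same everywhere-defined expression obtained from (\ref{eq:Knj-def-main}) and (\ref{eq:Kd-def-main}). That expression is finite because $f\in L^{1}$, so every cell integral is finite and the series converges absolutely. Hence the two agree as functions, and no real obstacle arises: the corollary is immediate from the two stated results.
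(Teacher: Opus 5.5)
Your proposal is correct and matches the paper's proof, which likewise obtains the corollary by combining Theorem~\ref{thm:tensor-convergence} with Proposition~\ref{prop:explicit-mixed-erlang}. Your check that the $L^{p}$-class operator and the pointwise series define the same function is a detail the paper leaves implicit. Finiteness of each cell integral alone does not give absolute convergence; what does is the uniform bound $\int_{Q_{m,n}}|f|\le\|f\|_{L^{1}}$ together with the Poisson weights summing to one.
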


\begin{corollary}[Finite mixture via compact support]\label{cor:compact-finite}
Let $1\le p<\infty$ and let $f\in\mathcal{D}_{p}(\mathbb{R}_{+}^{d})$
vanish a.e. on $\mathbb{R}_{+}^{d}\setminus Q_{M}$ for some $M>0$. Then,
for every $n\in\mathbb{N}$, the Erlang mixture approximation $\mathcal{K}_{n}^{(d)}f$
is already a finite multivariate Erlang mixture density. More precisely,
only indices $m=(m_{1},\dots,m_{d})$ with $1\le m_{j}\le\lfloor nM\rfloor+1$
can appear in \eqref{eq:multivariate-representation-main}. \end{corollary}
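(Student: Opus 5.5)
The statement follows directly from the explicit mixture formula in Proposition~\ref{prop:explicit-mixed-erlang}. The plan is to identify exactly which cell masses $a_{m,n}$ can be nonzero when $f$ vanishes off $Q_M$, and then observe that the remaining sum is finite with nonnegative weights summing to one.

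Fix $n\in\mathbb{N}$ and write \eqref{eq:multivariate-representation-main} as
\[
(\mathcal{K}_{n}^{(d)}f)(x)=\sum_{m\in\mathbb{N}^{d}}a_{m,n}\prod_{j=1}^{d}\tau_{m_{j},n}(x_{j}),
\qquad a_{m,n}=\int_{Q_{m,n}}f(t)\,\mathrm{d}t .
\]
Suppose some coordinate satisfies $m_j\ge\lfloor nM\rfloor+2$. Then $(m_j-1)/n\ge(\lfloor nM\rfloor+1)/n>M$. Hence every $t\in Q_{m,n}$ has $t_j\ge (m_j-1)/n>M$, so $Q_{m,n}\subseteq\mathbb{R}_{+}^{d}\setminus Q_M$. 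Since $f=0$ a.e.\ there, $a_{m,n}=0$.

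Consequently only the indices in
\[
F_n=\{1,\dots,\lfloor nM\rfloor+1\}^{d}
\]
contribute to the series. This set has $(\lfloor nM\rfloor+1)^d$ elements, so it is finite.

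On $F_n$ the weights are nonnegative because $f\ge0$ a.e. They sum to one because, by Proposition~\ref{prop:explicit-mixed-erlang}, $\sum_{m\in\mathbb{N}^d}a_{m,n}=1$ and every term outside $F_n$ vanishes. (Equivalently, the cells $Q_{m,n}$ with $m\in F_n$ cover $Q_M$ up to a null set, so their masses add to $\int f=1$.)

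Therefore $\mathcal{K}_{n}^{(d)}f$ has exactly the form \eqref{eq:finite-mixed-erlang} with common rate $\beta=n$ and index set $F=F_n$. This is the claimed finite multivariate Erlang mixture density.

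The only point needing care is the boundary index $m_j=\lfloor nM\rfloor+1$. For this index the cell $[\lfloor nM\rfloor/n,(\lfloor nM\rfloor+1)/n)$ still meets $[0,M]$, including the case where $nM$ is an integer and the cell meets $[0,M]$ only at the point $M$. Including it in $F_n$ is harmless: if its mass is zero the term simply drops out, so the upper bound $\lfloor nM\rfloor+1$ is valid in all cases.
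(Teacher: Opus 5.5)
Your proof is correct and follows the paper's argument: any index with some $m_j\ge\lfloor nM\rfloor+2$ gives a cell $Q_{m,n}$ lying strictly to the right of $M$ in coordinate $j$, so $a_{m,n}=0$ and the series reduces to the finite index set $\{1,\dots,\lfloor nM\rfloor+1\}^{d}$. You also check explicitly, via Proposition~\ref{prop:explicit-mixed-erlang}, that the surviving weights are nonnegative and sum to one, which the paper leaves implicit; this is a welcome addition.
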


The main qualitative density theorem is the following finite-mixture version.

\begin{theorem}[Finite multivariate Erlang mixture approximation]\label{thm:finite-approx}
Let $d\in\mathbb{N}$ and $1\le p<\infty$. Then the class of finite
multivariate Erlang mixture densities with common scale parameter
is dense in $\mathcal{D}_{p}(\mathbb{R}_{+}^{d})$ with respect to
the $L^{p}(\mathbb{R}_{+}^{d})$ norm. More explicitly, if $f\in\mathcal{D}_{p}(\mathbb{R}_{+}^{d})$,
then there exists a sequence $(g_{n})_{n\ge1}$ of finite multivariate
Erlang mixture densities such that 
\[
\left\lVert g_{n}-f\right\rVert _{L^{p}(\mathbb{R}_{+}^{d})}\longrightarrow0.
\]
Moreover, the common scale parameter of $g_{n}$ may be chosen to
be $1/n$. \end{theorem}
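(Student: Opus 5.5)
The idea is to combine Corollary~\ref{cor:countable} with a truncation of the countable mixture. Fix $f\in\mathcal{D}_{p}(\mathbb{R}_{+}^{d})$ and $n\in\mathbb{N}$. Write $h_{n}=\mathcal{K}_{n}^{(d)}f=\sum_{m\in\mathbb{N}^{d}}a_{m,n}\phi_{m,n}$, where $\phi_{m,n}(x)=\prod_{j=1}^{d}\tau_{m_{j},n}(x_{j})$, as in Proposition~\ref{prop:explicit-mixed-erlang}. By Corollary~\ref{cor:countable}, $\|h_{n}-f\|_{L^{p}}\to0$. It therefore suffices to find, for each $n$, a finite set $F_{n}\subset\mathbb{N}^{d}$ and a renormalized finite mixture $g_{n}$ with common rate $n$ such that $\|g_{n}-h_{n}\|_{L^{p}}\le 1/n$. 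The triangle inequality then gives the theorem, with scale $1/n$ as claimed.

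For the truncation, let $S_{N}=\sum_{\|m\|_{\infty}\le N}a_{m,n}$, so that $S_{N}\uparrow1$ as $N\to\infty$. We need a bound on $\|\phi_{m,n}\|_{L^{p}}$ that is uniform in $m$. Each $\tau_{k,n}$ is a density bounded by $n$, since $e^{-u}u^{k-1}/(k-1)!\le1$. Hence
\[
\|\tau_{k,n}\|_{L^{p}(\mathbb{R}_{+})}^{p}\le n^{p-1}\int\tau_{k,n}=n^{p-1},
\]
and so $\|\phi_{m,n}\|_{L^{p}(\mathbb{R}_{+}^{d})}\le n^{d(p-1)/p}$ uniformly in $m$. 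Minkowski's inequality (the series converges absolutely in $L^{p}$ for this reason) gives
\[
\Big\|h_{n}-\sum_{\|m\|_{\infty}\le N}a_{m,n}\phi_{m,n}\Big\|_{L^{p}}\le(1-S_{N})\,n^{d(p-1)/p}.
\]

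Next we renormalize so that the approximant is a genuine density. For $N$ large enough, $S_{N}>0$; choose $N=N_{n}$ so that in addition $(1-S_{N})n^{d(p-1)/p}\le 1/(2n)$. Define
\[
g_{n}=S_{N}^{-1}\sum_{\|m\|_{\infty}\le N}a_{m,n}\phi_{m,n}.
\]
Its weights are nonnegative and sum to $1$, so $g_{n}$ is a finite Erlang mixture with common rate $n$. The renormalization error is
\[
(S_{N}^{-1}-1)\Big\|\sum_{\|m\|_{\infty}\le N}a_{m,n}\phi_{m,n}\Big\|_{L^{p}}\le(1-S_{N})\,n^{d(p-1)/p},
\]
so enlarging $N$ further if necessary makes it at most $1/(2n)$. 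Consequently $\|g_{n}-h_{n}\|_{L^{p}}\le1/n$.

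The main obstacle is the uniform $L^{p}$ bound on the product kernels, which is what lets the tail mass $1-S_{N}$ control the truncation error. For fixed $n$ this bound is elementary, as shown above. An alternative is to first approximate $f$ in $L^{p}$ by a compactly supported density $f\mathbf{1}_{Q_{M}}/\int_{Q_{M}}f$ and then apply Corollary~\ref{cor:compact-finite} together with the contraction property in Theorem~\ref{thm:tensor-convergence}. That route gives finiteness directly, and I would keep it as a backup.
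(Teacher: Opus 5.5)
Your proposal is correct and follows essentially the same route as the paper. Both arguments truncate $\mathcal{K}_{n}^{(d)}f$ to the index cube $F_{N}$, bound the discarded tail via the uniform estimate $\|\varphi_{m,n}\|_{L^{p}}\le n^{d(1-1/p)}$ (Lemma~\ref{lem:kernel-bound}), choose $N=N(n)$ so the error is at most $1/n$, and conclude with the triangle inequality and Theorem~\ref{thm:tensor-convergence}. The only difference is cosmetic: you restore unit mass by rescaling the retained weights by $S_{N}^{-1}$, whereas the paper reassigns the tail mass $r_{n,N}=1-S_{N}$ to the single kernel $\varphi_{\mathbf{1},n}$, and both give the same bound $2n^{d(1-1/p)}(1-S_{N})$.
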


\begin{remark}\label{rem:gamma} Let $d\in\mathbb{N}$ and $1\le p<\infty$.
Any class of densities on $\mathbb{R}_{+}^{d}$ that contains all
finite mixtures of product-Erlang densities is $L^{p}$-dense in $\mathcal{D}_{p}(\mathbb{R}_{+}^{d})$.
In particular, the class of finite mixtures of products of gamma densities
is $L^{p}$-dense in $\mathcal{D}_{p}(\mathbb{R}_{+}^{d})$, because
every Erlang density is a gamma density with integer shape parameter.
\end{remark}

We now consider qualitative approximation in weighted sup norms.

\begin{theorem}[Qualitative convergence in $L_{\infty,\nu}$]\label{thm:weighted-qualitative}
Let $\nu\ge0$ and let $f\in L_{\infty,\nu}(\mathbb{R}_{+}^{d})$.
Assume that 
\begin{equation}
\Omega_{\nu,*}(f;\delta)\longrightarrow0\qquad(\delta\downarrow0).\label{eq:intrinsic-modulus-zero-main}
\end{equation}
Then 
\[
\left\lVert \mathcal{K}_{n}^{(d)}f-f\right\rVert _{\infty,\nu}\longrightarrow0\qquad(n\to\infty).
\]
More quantitatively, for every $\delta>0$ and every $n\in\mathbb{N}$,
\begin{equation}
\left\lVert \mathcal{K}_{n}^{(d)}f-f\right\rVert _{\infty,\nu}\le\Omega_{\nu,*}(f;\delta)+\left\lVert f\right\rVert _{\infty,\nu}\left(\frac{1+d/3}{n\delta^{2}}+\frac{\sqrt{A_{2\nu,d}(1+d/3)}}{\sqrt{n}\,\delta}\right),\label{eq:qualitative-weighted-estimate-main}
\end{equation}
where $A_{2\nu,d}$ is the constant from Lemma~\ref{lem:weighted-moments}
at the index $2\nu$. \end{theorem}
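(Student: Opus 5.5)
We start from the probabilistic representation of Proposition~\ref{prop:prob-representation}, which by the preceding remark applies to every $f\in L_{\infty,\nu}(\mathbb{R}_{+}^{d})$. Fix $x\in\mathbb{R}_{+}^{d}$, $n\in\mathbb{N}$ and $\delta>0$, and write $\Delta=Y_{n,x}-x$. Then
\[
|(\mathcal{K}_{n}^{(d)}f)(x)-f(x)|\le\mathbb{E}\bigl[|f(Y_{n,x})-f(x)|\bigr].
\]
Define the good event $G=\{\lVert\Delta\rVert_{2}\le\delta\sqrt{1+\lVert x\rVert_{1}}\}$ and its complement $G^{c}$, and split the expectation accordingly.

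On $G$, the definition of $\Omega_{\nu,*}$ gives directly $|f(Y_{n,x})-f(x)|\le\Omega_{\nu,*}(f;\delta)\,w_{\nu}(x)$. This produces the first term of \eqref{eq:qualitative-weighted-estimate-main} after dividing by $w_{\nu}(x)$.

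On $G^{c}$, we bound $|f(Y_{n,x})-f(x)|\le\lVert f\rVert_{\infty,\nu}\bigl(w_{\nu}(Y_{n,x})+w_{\nu}(x)\bigr)$ and treat the two pieces separately.

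For the $w_{\nu}(x)\mathbb{P}(G^{c})$ piece, we use the tail bound \eqref{eq:tail-displacement-main} with $\eta=\delta\sqrt{1+\lVert x\rVert_{1}}$:
\[
\mathbb{P}(G^{c})\le\frac{\lVert x\rVert_{1}/n+d/(3n^{2})}{\delta^{2}(1+\lVert x\rVert_{1})}\le\frac{1+d/3}{n\delta^{2}},
\]
using $n\ge1$. This gives the second term.

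For the $\mathbb{E}[w_{\nu}(Y_{n,x})\mathbf{1}_{G^{c}}]$ piece, we apply Cauchy--Schwarz:
\[
\mathbb{E}[w_{\nu}(Y_{n,x})\mathbf{1}_{G^{c}}]\le\bigl(\mathbb{E}[w_{2\nu}(Y_{n,x})]\bigr)^{1/2}\,\mathbb{P}(G^{c})^{1/2},
\]
since $w_{\nu}^{2}=w_{2\nu}$. Lemma~\ref{lem:weighted-moments} at index $2\nu$ bounds the first factor by $\sqrt{A_{2\nu,d}}\,w_{\nu}(x)$. The previous display bounds the second factor by $\sqrt{1+d/3}/(\sqrt{n}\,\delta)$. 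This yields exactly the third term.

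Dividing by $w_{\nu}(x)$ and taking the supremum over $x$ gives \eqref{eq:qualitative-weighted-estimate-main}.

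For the convergence statement, given $\varepsilon>0$, we first choose $\delta$ with $\Omega_{\nu,*}(f;\delta)<\varepsilon/2$, which is possible by \eqref{eq:intrinsic-modulus-zero-main}. We then let $n\to\infty$, so that the remaining terms, which are $O(n^{-1/2})$ for fixed $\delta$, fall below $\varepsilon/2$.

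The only delicate point is the tail piece involving $w_{\nu}(Y_{n,x})$, which cannot simply be bounded by $w_{\nu}(x)$. The Cauchy--Schwarz step combined with the $2\nu$ weighted-moment lemma handles it. It is also this step that explains why the constant involves $A_{2\nu,d}$ rather than $A_{\nu,d}$.

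A minor point is measurability and integrability of $f(Y_{n,x})$. This is covered by the remark following Lemma~\ref{lem:weighted-moments}, since $|f|\le\lVert f\rVert_{\infty,\nu}w_{\nu}$.
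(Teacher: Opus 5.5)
Your proposal is correct and follows the paper's proof essentially step for step: the same split of $\mathbb{E}\bigl[|f(Y_{n,x})-f(x)|\bigr]$ at radius $\delta\sqrt{1+\lVert x\rVert_{1}}$, the Chebyshev bound $(1+d/3)/(n\delta^{2})$ for the far event, and the Cauchy--Schwarz step with Lemma~\ref{lem:weighted-moments} at index $2\nu$ for the $w_{\nu}(Y_{n,x})$ tail piece. Your concluding $\varepsilon/2$ argument is the paper's ``fix $\delta$, let $n\to\infty$, then let $\delta\downarrow0$'' step, written out.
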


\begin{corollary}[Uniform convergence on $C_{0}$]\label{cor:global-uniform-c0}
Let $f\in C_{0}(\mathbb{R}_{+}^{d})$. Then 
\[
\left\lVert \mathcal{K}_{n}^{(d)}f-f\right\rVert _{L^{\infty}(\mathbb{R}_{+}^{d})}\longrightarrow0\qquad(n\to\infty).
\]
If, in addition, $[f]_{0,\alpha,*}<\infty$ for some $0<\alpha\le1$,
then 
\[
\left\lVert \mathcal{K}_{n}^{(d)}f-f\right\rVert _{L^{\infty}(\mathbb{R}_{+}^{d})}\le C_{\alpha,d}[f]_{0,\alpha,*}\,n^{-\alpha/2}\qquad(n\in\mathbb{N}),
\]
where one may take $C_{\alpha,d}=(1+d/3)^{\alpha/2}$. The same qualitative
conclusion also holds for every bounded continuous $f$ such that
$f-L\in C_{0}(\mathbb{R}_{+}^{d})$ for some constant $L\in\mathbb{R}$.
\end{corollary}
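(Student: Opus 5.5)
The plan is to read the whole corollary off Theorem~\ref{thm:weighted-qualitative} with $\nu=0$, so that $w_{0}\equiv1$ and $\left\lVert \cdot\right\rVert _{\infty,0}=\left\lVert \cdot\right\rVert _{L^{\infty}}$. The Hölder rate will then follow directly from Proposition~\ref{prop:prob-representation} and Lemma~\ref{lem:displacement-moments}. Since $f\in C_{0}(\mathbb{R}_{+}^{d})$ is bounded, $f\in L_{\infty,0}$. The only hypothesis left to check is $\Omega_{0,*}(f;\delta)\to0$ as $\delta\downarrow0$, where the admissible displacement is $\left\lVert y-x\right\rVert _{2}\le\delta\sqrt{1+\left\lVert x\right\rVert _{1}}$. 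This step is the main obstacle, because the allowed displacement grows with $x$, so uniform continuity alone is not enough.

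To handle it, fix $\varepsilon>0$ and choose $R$ such that $|f(z)|<\varepsilon/2$ whenever $\left\lVert z\right\rVert _{1}\ge R$. Assume $\delta\le1$ and split into two cases.

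\emph{Case 1: $\left\lVert x\right\rVert _{1}\le 2R$.} Then $\left\lVert y-x\right\rVert _{2}\le\delta\sqrt{1+2R}$. Since $f$ vanishes at infinity and is continuous, it is uniformly continuous on $\mathbb{R}_{+}^{d}$. Hence $|f(y)-f(x)|<\varepsilon$ once $\delta$ is small enough.

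\emph{Case 2: $\left\lVert x\right\rVert _{1}>2R$.} Using $\left\lVert \cdot\right\rVert _{1}\le\sqrt{d}\left\lVert \cdot\right\rVert _{2}$, we get $\left\lVert y\right\rVert _{1}\ge\left\lVert x\right\rVert _{1}-\sqrt{d}\,\delta\sqrt{1+\left\lVert x\right\rVert _{1}}$. This is at least $R$ for all such $x$ when $\delta$ is small, since $t-\sqrt{d}\delta\sqrt{1+t}\ge t/2$ for $t\ge 2R$ once $\delta\le\sqrt{R/(d(1+2R))}$ (with $R\ge1$ without loss of generality). Then both $|f(x)|$ and $|f(y)|$ are less than $\varepsilon/2$, so $|f(y)-f(x)|<\varepsilon$.

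The two cases together give $\Omega_{0,*}(f;\delta)\to0$, and Theorem~\ref{thm:weighted-qualitative} yields uniform convergence.

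For the rate, write $\Delta=Y_{n,x}-x$. By Proposition~\ref{prop:prob-representation} (valid since $f$ is bounded) and the definition of $[f]_{0,\alpha,*}$,
\[
|(\mathcal{K}_{n}^{(d)}f)(x)-f(x)|\le\mathbb{E}|f(Y_{n,x})-f(x)|\le[f]_{0,\alpha,*}\,(1+\left\lVert x\right\rVert _{1})^{-\alpha/2}\,\mathbb{E}\left\lVert \Delta\right\rVert _{2}^{\alpha}.
\]
Now apply \eqref{eq:r-moment-displacement-main} with $r=\alpha\le2$. It gives $\mathbb{E}\left\lVert \Delta\right\rVert _{2}^{\alpha}\le(1+d/3)^{\alpha/2}(1+\left\lVert x\right\rVert _{1})^{\alpha/2}n^{-\alpha/2}$. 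The factors of $(1+\left\lVert x\right\rVert _{1})$ cancel exactly, and taking the supremum over $x$ gives the stated bound with $C_{\alpha,d}=(1+d/3)^{\alpha/2}$.

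For the final claim, each $K_{n,j}$ reproduces constants, because the Poisson weights sum to one and each cell has length $1/n$. Hence $\mathcal{K}_{n}^{(d)}L=L$, which also follows from the representation $\mathbb{E}[L]=L$. By linearity, $\mathcal{K}_{n}^{(d)}f-f=\mathcal{K}_{n}^{(d)}(f-L)-(f-L)$, and the first part applied to $f-L\in C_{0}$ finishes the proof.
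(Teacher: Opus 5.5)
Your proof is correct and follows the paper's argument. Like the paper, you reduce the qualitative claim to Theorem~\ref{thm:weighted-qualitative} with $\nu=0$ by showing $\Omega_{0,*}(f;\delta)\to0$ through a split into small and large $\lVert x\rVert_{1}$, re-derive the rate exactly as in the proof of Theorem~\ref{thm:weighted-holder-rate} at $\nu=0$, and treat $f-L$ via $\mathcal{K}_{n}^{(d)}1=1$. The only cosmetic difference is that you invoke global uniform continuity of $C_{0}$ functions, whereas the paper uses uniform continuity on an explicit compact set.
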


\begin{corollary}[Finite Erlang mixture approximation in weighted
sup norms]\label{cor:weighted-finite} Let $\nu\ge0$, and let $f\in\mathcal{D}_{1}(\mathbb{R}_{+}^{d})\cap L_{\infty,\nu}(\mathbb{R}_{+}^{d})$.
Suppose that $\mathcal{K}_{n}^{(d)}f\to f$ in $L_{\infty,\nu}(\mathbb{R}_{+}^{d})$. Then there exists
a sequence $(g_{n})_{n\ge1}$ of finite multivariate Erlang mixture
densities with common scale parameter $1/n$ such that 
\[
\left\lVert g_{n}-f\right\rVert _{\infty,\nu}\longrightarrow0.
\]
\end{corollary}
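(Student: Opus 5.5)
We will build $g_{n}$ by truncating the countable mixture representation of $\mathcal{K}_{n}^{(d)}f$ from Proposition~\ref{prop:explicit-mixed-erlang} and renormalizing. Since $f\in\mathcal{D}_{1}(\mathbb{R}_{+}^{d})$, that proposition applies with $p=1$ and gives
\[
\mathcal{K}_{n}^{(d)}f=\sum_{m\in\mathbb{N}^{d}}a_{m,n}\tau_{m}^{(n)},\qquad \tau_{m}^{(n)}(x)=\prod_{j=1}^{d}\tau_{m_{j},n}(x_{j}),
\]
with $a_{m,n}\ge0$ and $\sum_{m}a_{m,n}=1$. For each fixed $n$ we choose a finite set $F_{n}\subset\mathbb{N}^{d}$, say a cube $\{1,\dots,R_{n}\}^{d}$, such that the discarded mass $\epsilon_{n}=\sum_{m\notin F_{n}}a_{m,n}$ is at most $1/n$. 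We may also assume $\sum_{m\in F_{n}}a_{m,n}>0$. We then set
\[
g_{n}=\Bigl(\sum_{m\in F_{n}}a_{m,n}\Bigr)^{-1}\sum_{m\in F_{n}}a_{m,n}\tau_{m}^{(n)}.
\]
This is a finite multivariate Erlang mixture density with common rate $n$, i.e.\ common scale $1/n$.

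The key step is to control the truncation error uniformly in the weighted sup norm. Write $S_{n}=1-\epsilon_{n}$ and $T_{n}=\sum_{m\notin F_{n}}a_{m,n}\tau_{m}^{(n)}\ge0$. Then
\[
g_{n}-\mathcal{K}_{n}^{(d)}f=\frac{\mathcal{K}_{n}^{(d)}f-T_{n}}{S_{n}}-\mathcal{K}_{n}^{(d)}f=\frac{\epsilon_{n}}{S_{n}}\mathcal{K}_{n}^{(d)}f-\frac{T_{n}}{S_{n}}.
\]
Since $w_{\nu}\ge1$, it suffices to bound these two terms. The first is controlled by $(\epsilon_{n}/S_{n})\|\mathcal{K}_{n}^{(d)}f\|_{\infty,\nu}$, and Lemma~\ref{lem:weighted-moments} gives $\|\mathcal{K}_{n}^{(d)}f\|_{\infty,\nu}\le A_{\nu,d}\|f\|_{\infty,\nu}$, so this term tends to $0$. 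For the second we use that each Erlang density is bounded: $\tau_{m,n}(x)\le n\sup_{k}e^{-u}u^{k}/k!\le n$. Hence $\tau_{m}^{(n)}\le n^{d}$ and $\|T_{n}\|_{\infty,\nu}\le\|T_{n}\|_{\infty}\le n^{d}\epsilon_{n}$.

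This forces a slightly stronger choice of truncation: pick $F_{n}$ so that $\epsilon_{n}\le n^{-d-1}$, which is possible because $\sum_{m}a_{m,n}$ converges for each fixed $n$. Then $\|T_{n}\|_{\infty,\nu}\le 1/n$, $S_{n}\to1$, and
\[
\|g_{n}-f\|_{\infty,\nu}\le\|g_{n}-\mathcal{K}_{n}^{(d)}f\|_{\infty,\nu}+\|\mathcal{K}_{n}^{(d)}f-f\|_{\infty,\nu}\to0,
\]
where the last term vanishes by hypothesis.

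The only delicate point is the uniform bound on the tail sum $T_{n}$ over all of $\mathbb{R}_{+}^{d}$. This is why the truncation level must depend on $n$ through the factor $n^{d}$ in the sup bound for the Erlang kernels, and not merely make the discarded mass small. Everything else follows directly from Proposition~\ref{prop:explicit-mixed-erlang}, Lemma~\ref{lem:weighted-moments}, and the assumed convergence of $\mathcal{K}_{n}^{(d)}f$.
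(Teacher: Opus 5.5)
Your proof is correct and is essentially the paper's argument: you truncate the cell-mass expansion of $\mathcal{K}_{n}^{(d)}f$ to a cube whose size depends on $n$, so that $n^{d}$ times the discarded mass is of order $1/n$; you bound every kernel by $n^{d}$ using $\tau_{m,n}\le n$ and $w_{\nu}\ge1$; and you finish with the triangle inequality and the hypothesis. The only difference is that the paper does not renormalize but moves the discarded mass $r_{n,N}$ onto the single kernel $\varphi_{\mathbf{1},n}$, which gives the error bound $2n^{d}r_{n,N}$ directly and so needs neither $S_{n}>0$ nor Lemma~\ref{lem:weighted-moments} (you could also drop that lemma, since $0\le\mathcal{K}_{n}^{(d)}f\le n^{d}$ pointwise).
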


\section{Convergence rates in scale}

\label{sec:scale-rates}

We first state the compact-set sup-norm estimate that underlies the
local uniform theory.

\begin{theorem}[Compact-set $L^\infty$ bound]\label{thm:compact-modulus}
Let $M>0$ and let $f:\mathbb{R}_{+}^{d}\to\mathbb{R}$ be bounded
and continuous. Then, for every $r>0$ and every $n\in\mathbb{N}$,
\begin{equation}
\left\lVert \mathcal{K}_{n}^{(d)}f-f\right\rVert _{L^{\infty}(Q_{M})}\le\omega_{Q_{M+r}}(f;r)+\frac{2\left\lVert f\right\rVert _{L^{\infty}(\mathbb{R}_{+}^{d})}}{r^{2}}\left(\frac{dM}{n}+\frac{d}{3n^{2}}\right).\label{eq:compact-modulus-estimate-main}
\end{equation}
\end{theorem}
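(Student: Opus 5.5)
The plan is to use the probabilistic representation of Proposition~\ref{prop:prob-representation}, which applies since $f$ is bounded and measurable. Fix $x\in Q_{M}$ and write $\Delta_{n,x}=Y_{n,x}-x$. Then
\[
(\mathcal{K}_{n}^{(d)}f)(x)-f(x)=\mathbb{E}\left[f(Y_{n,x})-f(x)\right],
\]
and I split the expectation on the event $E=\{\left\lVert \Delta_{n,x}\right\rVert _{2}\le r\}$ and its complement.

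On $E$, I need $Y_{n,x}\in Q_{M+r}$ so that the local modulus $\omega_{Q_{M+r}}(f;r)$ applies. This holds because $Y_{n,x}\in\mathbb{R}_{+}^{d}$ and each coordinate satisfies $|Y_{n,x,j}-x_{j}|\le\left\lVert \Delta_{n,x}\right\rVert _{2}\le r$ with $x_{j}\le M$. Thus $Y_{n,x,j}\in[0,M+r]$. Since $x\in Q_{M}\subset Q_{M+r}$ as well, we get $|f(Y_{n,x})-f(x)|\le\omega_{Q_{M+r}}(f;r)$ on $E$. Integrating against $\mathbb{P}(E)\le1$ then gives the first term of \eqref{eq:compact-modulus-estimate-main}.

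On $E^{c}$, I use the crude bound $|f(Y_{n,x})-f(x)|\le2\left\lVert f\right\rVert _{L^{\infty}(\mathbb{R}_{+}^{d})}$. It then remains to bound $\mathbb{P}(E^{c})$ via the Chebyshev-type estimate \eqref{eq:tail-displacement-main} of Lemma~\ref{lem:displacement-moments} with $\eta=r$:
\[
\mathbb{P}(E^{c})\le\frac{1}{r^{2}}\left(\frac{\left\lVert x\right\rVert _{1}}{n}+\frac{d}{3n^{2}}\right)\le\frac{1}{r^{2}}\left(\frac{dM}{n}+\frac{d}{3n^{2}}\right),
\]
where the last step uses $\left\lVert x\right\rVert _{1}\le dM$ on $Q_{M}$. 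Adding the two contributions and taking the supremum over $x\in Q_{M}$ yields \eqref{eq:compact-modulus-estimate-main}. Continuity guarantees that the pointwise supremum coincides with the $L^{\infty}(Q_{M})$ norm; in any case the bound holds pointwise for every $x$.

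The only point needing care is the containment $Y_{n,x}\in Q_{M+r}$ on $E$, so that the modulus is taken over the correct set. Nonnegativity of $Y_{n,x}$, which is clear from $N_{j}\ge0$ and $U_{j}\ge0$, handles the lower boundary. Everything else is routine.
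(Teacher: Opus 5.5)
Your proposal is correct and follows the paper's own proof: the probabilistic representation, a split on $\{\lVert Y_{n,x}-x\rVert_{2}\le r\}$, the local modulus on that event, and the bound $2\lVert f\rVert_{L^{\infty}(\mathbb{R}_{+}^{d})}$ times the Chebyshev tail estimate (with $\lVert x\rVert_{1}\le dM$) on its complement. Your check that $Y_{n,x}\in Q_{M+r}$ on the near event, using nonnegativity and coordinatewise displacement at most $r$, spells out a step the paper only asserts.
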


\begin{corollary}[Local uniform convergence on compact sets]\label{cor:compact-uniform}
If $f$ is bounded and continuous on $\mathbb{R}_{+}^{d}$, then for
every $M>0$, 
\[
\left\lVert \mathcal{K}_{n}^{(d)}f-f\right\rVert _{L^{\infty}(Q_{M})}\longrightarrow0\qquad(n\to\infty).
\]
\end{corollary}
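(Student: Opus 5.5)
The plan is to deduce Corollary~\ref{cor:compact-uniform} directly from the estimate \eqref{eq:compact-modulus-estimate-main} of Theorem~\ref{thm:compact-modulus}. Fix $M>0$ and $\varepsilon>0$. It suffices to exhibit, for all large $n$, a choice of $r>0$ making the right-hand side of \eqref{eq:compact-modulus-estimate-main} smaller than $\varepsilon$.

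First, control the modulus term. Fix $r_{0}=1$, so that the cube $Q_{M+1}$ is compact. Since $f$ is continuous on $\mathbb{R}_{+}^{d}$, it is uniformly continuous on $Q_{M+1}$ by the Heine--Cantor theorem. Hence $\omega_{Q_{M+1}}(f;r)\to0$ as $r\downarrow0$.

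Next, note that for $r\le1$ we have $Q_{M+r}\subseteq Q_{M+1}$. Therefore
\[
\omega_{Q_{M+r}}(f;r)\le\omega_{Q_{M+1}}(f;r).
\]
Choose $r\in(0,1]$ so that this quantity is at most $\varepsilon/2$. With $r$ now fixed, the second term of \eqref{eq:compact-modulus-estimate-main} equals $2\|f\|_{L^{\infty}(\mathbb{R}_{+}^{d})}r^{-2}\bigl(dM/n+d/(3n^{2})\bigr)$. This tends to $0$ as $n\to\infty$, because $\|f\|_{L^{\infty}}<\infty$ by boundedness. So it is below $\varepsilon/2$ for all $n\ge n_{0}(\varepsilon)$.

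Combining the two bounds gives $\limsup_{n}\|\mathcal{K}_{n}^{(d)}f-f\|_{L^{\infty}(Q_{M})}\le\varepsilon$, and since $\varepsilon$ is arbitrary the limit is zero. The only point needing care is the monotonicity of the local modulus in the domain: the domain $Q_{M+r}$ itself depends on $r$, so one must pass to the fixed compact set $Q_{M+1}$ before invoking uniform continuity. I do not expect any genuine obstacle beyond this.
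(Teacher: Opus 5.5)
Your proposal is correct and matches the paper's proof essentially step for step. Like the paper, you pass to the fixed compact cube $Q_{M+1}$ and choose $r\le 1$ so that the modulus term is at most $\varepsilon/2$, using $Q_{M+r}\subseteq Q_{M+1}$. You then fix $r$ and let $n\to\infty$ in the second term of the estimate in Theorem~\ref{thm:compact-modulus}.
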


\begin{theorem}[Compact-set H\"older rate]\label{thm:compact-holder}
Let $M>0$, let $0<\alpha\le1$, and let $f:\mathbb{R}_{+}^{d}\to\mathbb{R}$
be bounded and measurable. Assume that there is a constant $H>0$ such that 
\begin{equation}
\left\lvert f(y)-f(z)\right\rvert \le H\left\lVert y-z\right\rVert _{2}^{\alpha}\qquad(y,z\in Q_{M+1}).\label{eq:local-holder-compact-main}
\end{equation}
Then, for every $n\in\mathbb{N}$, 
\begin{equation}
\left\lVert \mathcal{K}_{n}^{(d)}f-f\right\rVert _{L^{\infty}(Q_{M})}\le HC_{\alpha,d}\left(\frac{1+dM}{n}\right)^{\alpha/2}+2\left\lVert f\right\rVert _{L^{\infty}(\mathbb{R}_{+}^{d})}\left(\frac{dM}{n}+\frac{d}{3n^{2}}\right),\label{eq:compact-holder-rate-main}
\end{equation}
where one may take $C_{\alpha,d}=(1+d/3)^{\alpha/2}$. In particular,
\[
\left\lVert \mathcal{K}_{n}^{(d)}f-f\right\rVert _{L^{\infty}(Q_{M})}=O(n^{-\alpha/2}).
\]
\end{theorem}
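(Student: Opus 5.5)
We prove Theorem~\ref{thm:compact-holder} from the probabilistic representation of Proposition~\ref{prop:prob-representation}, together with the moment and tail bounds of Lemma~\ref{lem:displacement-moments}. Since $f$ is bounded and measurable, the representation applies, and for each $x\in Q_{M}$ it gives
\[
(\mathcal{K}_{n}^{(d)}f)(x)-f(x)=\mathbb{E}\bigl[f(Y_{n,x})-f(x)\bigr].
\]
Let $E=\{\left\lVert \Delta_{n,x}\right\rVert _{2}\le1\}$. We split the expectation over $E$ and its complement.

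On $E$, every coordinate satisfies $0\le Y_{n,x,j}\le x_{j}+1\le M+1$, so both $Y_{n,x}$ and $x$ lie in $Q_{M+1}$. The local Hölder condition \eqref{eq:local-holder-compact-main} therefore gives
\[
\bigl|f(Y_{n,x})-f(x)\bigr|\mathbf{1}_{E}\le H\left\lVert \Delta_{n,x}\right\rVert _{2}^{\alpha}.
\]
Taking expectations and applying \eqref{eq:r-moment-displacement-main} with $r=\alpha\in(0,1]\subset(0,2]$ yields the bound $HC_{\alpha,d}\bigl((1+\left\lVert x\right\rVert _{1})/n\bigr)^{\alpha/2}$. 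Because $x\in Q_{M}$, we have $\left\lVert x\right\rVert _{1}\le dM$, which produces the first term $HC_{\alpha,d}\bigl((1+dM)/n\bigr)^{\alpha/2}$ of \eqref{eq:compact-holder-rate-main}.

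On $E^{c}$, we use the crude bound $|f(Y_{n,x})-f(x)|\le2\left\lVert f\right\rVert _{L^{\infty}}$. The tail estimate \eqref{eq:tail-displacement-main} with $\eta=1$ and $\left\lVert x\right\rVert _{1}\le dM$ gives
\[
\mathbb{P}(E^{c})\le\frac{dM}{n}+\frac{d}{3n^{2}}.
\]
The contribution from $E^{c}$ is therefore at most $2\left\lVert f\right\rVert _{L^{\infty}}\bigl(dM/n+d/(3n^{2})\bigr)$, which is exactly the second term. Adding the two contributions and taking the supremum over $x\in Q_{M}$ proves \eqref{eq:compact-holder-rate-main}.

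For the $O(n^{-\alpha/2})$ claim, note that $\alpha\le1$ gives $n^{-1}\le n^{-\alpha/2}$ and $n^{-2}\le n^{-\alpha/2}$ for $n\ge1$, so the second term is also $O(n^{-\alpha/2})$.

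The only delicate point is confirming that on $E$ the random point $Y_{n,x}$ stays inside $Q_{M+1}$, the region where the Hölder hypothesis is available. This holds because $Y_{n,x}$ is always in $\mathbb{R}_{+}^{d}$ and, on $E$, each coordinate of the displacement is bounded by its Euclidean norm, which is at most $1$. No continuity of $f$ beyond $Q_{M+1}$ is needed.
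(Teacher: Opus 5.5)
Your proposal is correct and follows essentially the same route as the paper: the probabilistic representation, a split on the event $\{\lVert Y_{n,x}-x\rVert_{2}\le1\}$ so that $Y_{n,x}\in Q_{M+1}$ and the H\"older bound applies, the $r=\alpha$ moment estimate, and Chebyshev with $\eta=1$ on the complement. You also spell out two points the paper leaves implicit, namely why $Y_{n,x}$ stays in $Q_{M+1}$ on that event and why the second term is $O(n^{-\alpha/2})$.
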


The next pair of theorems give global weighted sup-norm and $L^p$ rates.

\begin{theorem}[Weighted H\"older rate in $L_{\infty,\nu}$]\label{thm:weighted-holder-rate}
Let $\nu\ge0$ and $0<\alpha\le1$. If $f\in L_{\infty,\nu}(\mathbb{R}_{+}^{d})$
and $[f]_{\nu,\alpha,*}<\infty$, then for every $n\in\mathbb{N}$,
\begin{equation}
\left\lVert \mathcal{K}_{n}^{(d)}f-f\right\rVert _{\infty,\nu}\le C_{\alpha,d}[f]_{\nu,\alpha,*}\,n^{-\alpha/2},\label{eq:weighted-holder-rate-main}
\end{equation}
where one may take $C_{\alpha,d}=(1+d/3)^{\alpha/2}$. \end{theorem}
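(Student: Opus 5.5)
We prove Theorem~\ref{thm:weighted-holder-rate} directly from the probabilistic representation of Proposition~\ref{prop:prob-representation} together with the displacement moment bound of Lemma~\ref{lem:displacement-moments}. Fix $x\in\mathbb{R}_{+}^{d}$ and $n\in\mathbb{N}$. We first have to justify that the representation applies. The seminorm condition with $y$ arbitrary and base point $x$ gives $|f(y)|\le|f(x)|+[f]_{\nu,\alpha,*}w_{\nu}(x)\|y-x\|_{2}^{\alpha}$. Since $\alpha\le1\le2$, Lemma~\ref{lem:displacement-moments} shows that $\mathbb{E}\|Y_{n,x}-x\|_{2}^{\alpha}<\infty$, so $\mathbb{E}|f(Y_{n,x})|<\infty$. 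Alternatively, $f\in L_{\infty,\nu}$ and the Remark after Lemma~\ref{lem:weighted-moments} give the same conclusion. Hence $(\mathcal{K}_{n}^{(d)}f)(x)=\mathbb{E}[f(Y_{n,x})]$.

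Next we use that $\mathcal{K}_{n}^{(d)}$ reproduces constants, which is immediate from the representation since $Y_{n,x}$ is a random vector. This lets us write
\[
\left\lvert(\mathcal{K}_{n}^{(d)}f)(x)-f(x)\right\rvert=\left\lvert\mathbb{E}[f(Y_{n,x})-f(x)]\right\rvert\le\mathbb{E}\left\lvert f(Y_{n,x})-f(x)\right\rvert .
\]
The definition of $[f]_{\nu,\alpha,*}$, applied with base point $x$ and $y=Y_{n,x}$, bounds the integrand pointwise:
\[
\left\lvert f(Y_{n,x})-f(x)\right\rvert\le[f]_{\nu,\alpha,*}\,w_{\nu}(x)\,(1+\|x\|_{1})^{-\alpha/2}\,\|\Delta_{n,x}\|_{2}^{\alpha}.
\]
This also holds trivially when $Y_{n,x}=x$. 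The point is that the weight and the normalization are evaluated at the deterministic point $x$, not at $Y_{n,x}$. For this reason no weighted moment bound of Lemma~\ref{lem:weighted-moments} type is needed here. That is the main thing to get right: one must use the seminorm with $x$ as the base point.

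Taking expectations and applying \eqref{eq:r-moment-displacement-main} with $r=\alpha\in(0,1]$ gives
\[
\mathbb{E}\|\Delta_{n,x}\|_{2}^{\alpha}\le(1+d/3)^{\alpha/2}\left(\frac{1+\|x\|_{1}}{n}\right)^{\alpha/2}.
\]
The factor $(1+\|x\|_{1})^{\alpha/2}$ cancels against the normalization, leaving
\[
\left\lvert(\mathcal{K}_{n}^{(d)}f)(x)-f(x)\right\rvert\le(1+d/3)^{\alpha/2}[f]_{\nu,\alpha,*}\,w_{\nu}(x)\,n^{-\alpha/2}.
\]
Dividing by $w_{\nu}(x)$ and taking the supremum over $x$ yields \eqref{eq:weighted-holder-rate-main} with $C_{\alpha,d}=(1+d/3)^{\alpha/2}$.

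The only step requiring care is the moment inequality for $r=\alpha<2$. If it were not already available in Lemma~\ref{lem:displacement-moments}, we would derive it from \eqref{eq:second-moment-displacement-main} by Jensen's inequality (concavity of $t\mapsto t^{r/2}$), using $\|x\|_{1}/n+d/(3n^{2})\le(1+d/3)(1+\|x\|_{1})/n$ for $n\ge1$.
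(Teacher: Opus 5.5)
Your proof is correct and follows the paper's argument essentially line for line. You use the probabilistic representation of Proposition~\ref{prop:prob-representation}, apply the seminorm pointwise with the deterministic base point $x$, and invoke the $r=\alpha$ moment bound of Lemma~\ref{lem:displacement-moments}, where the factor $(1+\lVert x\rVert_{1})^{\alpha/2}$ cancels. The paper justifies integrability through $f\in L_{\infty,\nu}$ and Lemma~\ref{lem:weighted-moments}, which is the alternative you also mention.
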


\begin{theorem}[Weighted $L^p$ rate from an integrated intrinsic H\"older envelope]\label{thm:weighted-lp-rate}
Let $1\le p<\infty$, let $\eta\ge0$, let $0<\alpha\le1$, and let
$f:\mathbb{R}_{+}^{d}\to\mathbb{R}$ be measurable. Assume that there
exists a measurable function $H\in L_{p,\eta}(\mathbb{R}_{+}^{d})$
such that 
\begin{equation}
\left\lvert f(y)-f(x)\right\rvert \le H(x)\left(\frac{\left\lVert y-x\right\rVert _{2}}{\sqrt{1+\left\lVert x\right\rVert _{1}}}\right)^{\alpha}\qquad(x,y\in\mathbb{R}_{+}^{d}).\label{eq:weighted-lp-envelope-main}
\end{equation}
Then, for every $n\in\mathbb{N}$, 
\begin{equation}
\left\lVert \mathcal{K}_{n}^{(d)}f-f\right\rVert _{p,\eta}\le C_{\alpha,d}\,n^{-\alpha/2}\,\left\lVert H\right\rVert _{p,\eta},\label{eq:weighted-lp-rate-main}
\end{equation}
where one may take $C_{\alpha,d}=(1+d/3)^{\alpha/2}$. \end{theorem}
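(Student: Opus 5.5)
The plan is to use the probabilistic representation of Proposition~\ref{prop:prob-representation} pointwise, then apply the Hölder envelope and Minkowski's integral inequality in the weighted space. Fix $n$ and $x\in\mathbb{R}_{+}^{d}$. I first check that the representation applies. By \eqref{eq:weighted-lp-envelope-main}, $|f(Y_{n,x})|\le|f(x)|+H(x)\,\|\Delta_{n,x}\|_{2}^{\alpha}(1+\|x\|_{1})^{-\alpha/2}$. This has finite expectation by Lemma~\ref{lem:displacement-moments}, at least for $x$ with $H(x)<\infty$ and $|f(x)|<\infty$, which holds a.e. Hence
\[
\left\lvert(\mathcal{K}_{n}^{(d)}f)(x)-f(x)\right\rvert=\left\lvert\mathbb{E}[f(Y_{n,x})-f(x)]\right\rvert\le\frac{H(x)}{(1+\left\lVert x\right\rVert_{1})^{\alpha/2}}\,\mathbb{E}\left[\left\lVert\Delta_{n,x}\right\rVert_{2}^{\alpha}\right].
\]

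Next I apply \eqref{eq:r-moment-displacement-main} with $r=\alpha\in(0,1]\subset(0,2]$:
\[
\mathbb{E}\left[\left\lVert\Delta_{n,x}\right\rVert_{2}^{\alpha}\right]\le(1+d/3)^{\alpha/2}\left(\frac{1+\left\lVert x\right\rVert_{1}}{n}\right)^{\alpha/2}.
\]
The factor $(1+\|x\|_{1})^{\alpha/2}$ cancels, and I get the pointwise a.e. bound
\[
\left\lvert(\mathcal{K}_{n}^{(d)}f)(x)-f(x)\right\rvert\le C_{\alpha,d}\,n^{-\alpha/2}H(x),\qquad C_{\alpha,d}=(1+d/3)^{\alpha/2}.
\]
Raising this to the $p$th power, dividing by $w_{\eta}(x)$ and integrating gives \eqref{eq:weighted-lp-rate-main} directly. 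No Minkowski step is needed, because the envelope is evaluated at the base point $x$, not at $Y_{n,x}$. That is exactly why the hypothesis is phrased with $H(x)$.

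The main obstacle is measure-theoretic rather than analytic. I must ensure that $\mathcal{K}_{n}^{(d)}f$ is well defined and equals the expectation a.e. even though $f$ need not lie in any $L^{p}$ space. This needs the integrability of $f(Y_{n,x})$, obtained above. It also needs measurability of $x\mapsto(\mathcal{K}_{n}^{(d)}f)(x)$, which follows from the series definition \eqref{eq:Knj-def-main} via Tonelli/Fubini applied to $f^{\pm}$.

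A further subtlety is that $f$ must be finite at the points where the envelope is used. If $H(x)=\infty$ on a null set, those $x$ simply do not contribute to the integral. If $f(x)$ were infinite somewhere, the envelope inequality with finite $H$ would be contradicted, except on the null set where $H=\infty$; that set is harmless. With these points handled, the estimate is immediate.
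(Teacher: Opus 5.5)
Your proposal is correct and follows the paper's proof essentially step for step: the probabilistic representation, the envelope evaluated at the base point $x$, the moment bound \eqref{eq:r-moment-displacement-main} with $r=\alpha$ so that $(1+\lVert x\rVert_1)^{\alpha/2}$ cancels, and integration of the pointwise bound $C_{\alpha,d}n^{-\alpha/2}H(x)$ against $w_\eta^{-1}$. The only difference is cosmetic: the paper checks $\mathbb{E}[|f(Y_{n,x})|]<\infty$ and local integrability once, by applying the envelope at a single point $x_0$ with $H(x_0)<\infty$, which gives a global growth bound of order $\alpha$ and makes $\mathcal{K}_n^{(d)}f$ defined everywhere, whereas you apply the envelope at each $x$ with $H(x)<\infty$ (equally sufficient); also, $f$ is real-valued, so your concern about $f(x)=\infty$ does not arise.
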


\begin{corollary}[From weighted sup-norm control to weighted $L^{p}$
control]\label{cor:weighted-sup-to-lp} Let $1\le p<\infty$, let
$\nu\ge0$, let $0<\alpha\le1$, and let $\eta>\nu p+d$. Suppose
that $f\in L_{\infty,\nu}(\mathbb{R}_{+}^{d})$ and $[f]_{\nu,\alpha,*}<\infty$.
Then 
\begin{equation}
\left\lVert \mathcal{K}_{n}^{(d)}f-f\right\rVert _{p,\eta}\le C_{\alpha,d}\,B_{p,\eta,\nu,d}\,[f]_{\nu,\alpha,*}\,n^{-\alpha/2},\label{eq:weighted-sup-to-lp-main}
\end{equation}
where 
\[
B_{p,\eta,\nu,d}=\left(\int_{\mathbb{R}_{+}^{d}}(1+\left\lVert x\right\rVert _{1})^{\nu p-\eta}\,\mathrm{d}x\right)^{1/p}<\infty.
\]
In particular, $\left\lVert \mathcal{K}_{n}^{(d)}f-f\right\rVert _{p,\eta}=O(n^{-\alpha/2})$.
\end{corollary}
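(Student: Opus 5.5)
The plan is to reduce the statement to Theorem~\ref{thm:weighted-lp-rate} by producing an explicit envelope $H$ from the weighted Hölder seminorm, and then check that this envelope lies in $L_{p,\eta}(\mathbb{R}_{+}^{d})$ under the hypothesis $\eta>\nu p+d$.

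First, directly from the definition \eqref{eq:weighted-holder-seminorm}, for all $x,y\in\mathbb{R}_{+}^{d}$ (the case $y=x$ being trivial),
\[
\left\lvert f(y)-f(x)\right\rvert\le [f]_{\nu,\alpha,*}\,w_{\nu}(x)\left(\frac{\left\lVert y-x\right\rVert _{2}}{\sqrt{1+\left\lVert x\right\rVert _{1}}}\right)^{\alpha}.
\]
So condition \eqref{eq:weighted-lp-envelope-main} holds with the measurable (indeed continuous) envelope $H(x)=[f]_{\nu,\alpha,*}\,w_{\nu}(x)$. Measurability of $f$ is automatic, since $f\in L_{\infty,\nu}(\mathbb{R}_{+}^{d})$.

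Second, we compute
\[
\left\lVert H\right\rVert _{p,\eta}^{p}=[f]_{\nu,\alpha,*}^{p}\int_{\mathbb{R}_{+}^{d}}(1+\left\lVert x\right\rVert _{1})^{\nu p-\eta}\,\mathrm{d}x=[f]_{\nu,\alpha,*}^{p}B_{p,\eta,\nu,d}^{p}.
\]
It remains to show this integral is finite. Pass to the variable $s=\left\lVert x\right\rVert _{1}$ on the orthant. The simplex $\{x\in\mathbb{R}_{+}^{d}:\left\lVert x\right\rVert _{1}\le s\}$ has volume $s^{d}/d!$, so by the coarea/layer-cake formula the integral equals
\[
\frac{1}{(d-1)!}\int_{0}^{\infty}(1+s)^{\nu p-\eta}s^{d-1}\,\mathrm{d}s\le\frac{1}{(d-1)!}\int_{0}^{\infty}(1+s)^{\nu p-\eta+d-1}\,\mathrm{d}s .
\]
This is finite exactly because $\nu p-\eta+d-1<-1$, i.e.\ $\eta>\nu p+d$. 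Hence $H\in L_{p,\eta}(\mathbb{R}_{+}^{d})$.

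Third, apply Theorem~\ref{thm:weighted-lp-rate}. It gives $\left\lVert \mathcal{K}_{n}^{(d)}f-f\right\rVert _{p,\eta}\le C_{\alpha,d}n^{-\alpha/2}\left\lVert H\right\rVert _{p,\eta}$, which is \eqref{eq:weighted-sup-to-lp-main}, and the $O(n^{-\alpha/2})$ statement follows. The only step with any substance is the finiteness of $B_{p,\eta,\nu,d}$, specifically the $s^{d-1}$ surface factor that produces the threshold $\eta>\nu p+d$. One should also confirm that $\mathcal{K}_{n}^{(d)}f$ is well defined for $f$ of polynomial growth, and this is covered by Lemma~\ref{lem:weighted-moments} and the subsequent remark.
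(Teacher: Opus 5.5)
Your proposal is correct and is essentially the paper's argument. The paper invokes Theorem~\ref{thm:weighted-holder-rate} to get the pointwise bound $|\mathcal{K}_{n}^{(d)}f(x)-f(x)|\le C_{\alpha,d}[f]_{\nu,\alpha,*}n^{-\alpha/2}w_{\nu}(x)$ and then integrates it against $w_{\eta}^{-1}$, and Theorem~\ref{thm:weighted-lp-rate} with your envelope $H=[f]_{\nu,\alpha,*}w_{\nu}$ performs exactly that computation internally. Your simplex-volume computation showing $B_{p,\eta,\nu,d}<\infty$ under $\eta>\nu p+d$ fills in a step the paper only asserts.
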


\begin{remark}[Unweighted $L^{p}$ rates]\label{rem:unweighted-lp}
Taking $\eta=0$ in Theorem~\ref{thm:weighted-lp-rate} yields the
ordinary unweighted estimate 
\[
\left\lVert \mathcal{K}_{n}^{(d)}f-f\right\rVert _{L^{p}(\mathbb{R}_{+}^{d})}\le C_{\alpha,d}\,n^{-\alpha/2}\,\left\lVert H\right\rVert _{L^{p}(\mathbb{R}_{+}^{d})}
\]
whenever $H\in L^{p}(\mathbb{R}_{+}^{d})$ satisfies \eqref{eq:weighted-lp-envelope-main}.
Thus the probabilistic argument gives unweighted $L^{p}$ rates. However,
this is weaker than the one-dimensional theory of \cite{Totik1983,Totik1985}. A correspondingly sharp multivariate theorem would require
a more nuanced approach. \end{remark}

\section{Convergence rates in the number of components}

\label{sec:component-rates}

We now pass from scale-dependent rates for the countable Erlang mixture
operator to explicit rates for finite mixtures indexed by the number
of components.

\begin{proposition}[Compact-set truncation]\label{prop:compact-tailfree}
Let $d\in\mathbb{N}$, let $M>0$, and let $f\in\mathcal{D}_{1}(\mathbb{R}_{+}^{d})$.
Write 
\[
\mathcal{K}_{n}^{(d)}f=\sum_{m\in\mathbb{N}^{d}}a_{m,n}\,\varphi_{m,n},\qquad\varphi_{m,n}(x)=\prod_{j=1}^{d}\tau_{m_{j},n}(x_{j}).
\]
For $N\in\mathbb{N}$ with $N\ge nM$, define 
\[
F_{N}=\{m\in\mathbb{N}^{d}:1\le m_{j}\le N\text{ for all }j=1,\dots,d\},\qquad r_{n,N}=\sum_{m\notin F_{N}}a_{m,n},
\]
set $\ell_{N}=(N+1,\dots,N+1)\in\mathbb{N}^{d}$, and define 
\[
\widetilde{g}_{n,N}^{(M)}=\sum_{m\in F_{N}}a_{m,n}\,\varphi_{m,n}+r_{n,N}\,\varphi_{\ell_{N},n}.
\]
Then $\widetilde{g}_{n,N}^{(M)}$ is a finite multivariate Erlang
mixture density with at most $N^{d}+1$ components, and 
\begin{equation}
\left\lVert \mathcal{K}_{n}^{(d)}f-\widetilde{g}_{n,N}^{(M)}\right\rVert _{L^{\infty}(Q_{M})}\le2n^{d}e^{-nM}\frac{(nM)^{N}}{N!}.\label{eq:compact-tailfree-trunc-main}
\end{equation}
If, in addition, $f$ is bounded and continuous on $\mathbb{R}_{+}^{d}$,
then for every $r>0$, 
\begin{align}
\left\lVert \widetilde{g}_{n,N}^{(M)}-f\right\rVert _{L^{\infty}(Q_{M})} & \le\omega_{Q_{M+r}}(f;r)+\frac{2\left\lVert f\right\rVert _{L^{\infty}(\mathbb{R}_{+}^{d})}}{r^{2}}\left(\frac{dM}{n}+\frac{d}{3n^{2}}\right)\nonumber \\
 & \qquad+2n^{d}e^{-nM}\frac{(nM)^{N}}{N!}.\label{eq:compact-tailfree-modulus-main}
\end{align}
\end{proposition}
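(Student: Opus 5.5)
The plan is to verify the three assertions in turn. The mixture structure is bookkeeping, the sup-norm bound \eqref{eq:compact-tailfree-trunc-main} reduces to a pointwise bound on a single product-Erlang kernel, and \eqref{eq:compact-tailfree-modulus-main} follows from the triangle inequality.

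\emph{Step 1: mixture structure.} By Proposition~\ref{prop:explicit-mixed-erlang}, $a_{m,n}\ge0$ and $\sum_{m}a_{m,n}=1$. Hence $r_{n,N}=1-\sum_{m\in F_N}a_{m,n}\in[0,1]$, and the weights of $\widetilde g^{(M)}_{n,N}$ are nonnegative and sum to one. Since $\ell_N\notin F_N$, the components are the $\varphi_{m,n}$ with $m\in F_N\cup\{\ell_N\}$. There are at most $N^d+1$ of them, all with common rate $n$.

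\emph{Step 2: reduction to a kernel bound.} On $Q_M$ we write
\[
\mathcal{K}_n^{(d)}f-\widetilde g^{(M)}_{n,N}=\sum_{m\notin F_N}a_{m,n}\varphi_{m,n}-r_{n,N}\varphi_{\ell_N,n}.
\]
Its absolute value is at most $2r_{n,N}\sup_{m\notin F_N}\varphi_{m,n}(x)$, using $\ell_N\notin F_N$. Since $r_{n,N}\le1$, it suffices to show that for every $x\in Q_M$ and every $m\notin F_N$,
\[
\varphi_{m,n}(x)\le n^d e^{-nM}\frac{(nM)^N}{N!}.
\]

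\emph{Step 3: the kernel bound (the main step).} Write $\tau_{k,n}(t)=n\,p_{k-1}(nt)$, where $p_j(\lambda)=e^{-\lambda}\lambda^j/j!$ is a Poisson probability.
\begin{itemize}
\item Every factor satisfies $\tau_{m_j,n}(x_j)\le n$, since $p_j\le1$.
\item Since $m\notin F_N$, some coordinate has $m_j\ge N+1$. Put $k=m_j-1\ge N\ge nM\ge nx_j$.
\item The map $\lambda\mapsto p_k(\lambda)$ has derivative $p_{k-1}(\lambda)-p_k(\lambda)$, which is $\ge0$ for $\lambda\le k$. So $\lambda\mapsto p_k(\lambda)$ is nondecreasing on $[0,k]$, giving $p_k(nx_j)\le p_k(nM)$.
\item For fixed $\lambda=nM$, the ratio $p_{k+1}(\lambda)/p_k(\lambda)=\lambda/(k+1)\le1$ when $k\ge N\ge nM$. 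So $p_k(nM)\le p_N(nM)$.
\end{itemize}
Combining these gives $\tau_{m_j,n}(x_j)\le n\,e^{-nM}(nM)^N/N!$. Multiplying by the bound $n$ for each of the other $d-1$ factors yields the claim. The hypothesis $N\ge nM$ is exactly what makes both monotonicity steps valid, which is why this is the delicate point.

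\emph{Step 4: the second estimate.} For bounded continuous $f$, apply the triangle inequality
\[
\|\widetilde g^{(M)}_{n,N}-f\|_{L^\infty(Q_M)}\le\|\mathcal{K}_n^{(d)}f-f\|_{L^\infty(Q_M)}+\|\mathcal{K}_n^{(d)}f-\widetilde g^{(M)}_{n,N}\|_{L^\infty(Q_M)}.
\]
Bound the first term by Theorem~\ref{thm:compact-modulus} and the second by \eqref{eq:compact-tailfree-trunc-main}. This gives \eqref{eq:compact-tailfree-modulus-main}.
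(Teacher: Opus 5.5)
Your proposal is correct and follows essentially the same route as the paper. Both write $\mathcal{K}_{n}^{(d)}f-\widetilde{g}_{n,N}^{(M)}$ as a tail sum over $m\notin F_{N}$ and use the same two monotonicity facts: monotonicity in $x$ below the mode $(m_j-1)/n\ge M$, and monotonicity in the shape index via the ratio $nM/(k+1)\le 1$. Both also use the crude bound $n$ on the remaining factors and finish with the triangle inequality and Theorem~\ref{thm:compact-modulus}; your derivative computation for $\lambda\mapsto p_k(\lambda)$ only spells out the paper's appeal to the location of the mode.
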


\begin{corollary}[Component-count compact-set rate]\label{cor:compact-component-rate}
Let $d\in\mathbb{N}$, let $M>0$, let $0<\alpha\le1$, and let $f\in\mathcal{D}_{1}(\mathbb{R}_{+}^{d})\cap L^{\infty}(\mathbb{R}_{+}^{d})$.
Assume that there is a constant $H>0$ such that 
\[
\left\lvert f(y)-f(z)\right\rvert \le H\left\lVert y-z\right\rVert _{2}^{\alpha}\qquad(y,z\in Q_{M+1}).
\]
Then there exist constants $C>0$ and $K_{0}\in\mathbb{N}$ such that
for every integer $K\ge K_{0}$ there exists a finite multivariate
Erlang mixture density $g_{K}$ with at most $K$ components and 
\[
\left\lVert g_{K}-f\right\rVert _{L^{\infty}(Q_{M})}\le CK^{-\alpha/(2d)}.
\]
\end{corollary}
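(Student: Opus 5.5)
Our plan is to combine the truncation bound of Proposition~\ref{prop:compact-tailfree} with the compact-set Hölder rate of Theorem~\ref{thm:compact-holder}, and then link $n$ to $K$ by taking $N$ proportional to $n$. Set $g=\widetilde{g}_{n,N}^{(M)}$, which has at most $N^{d}+1$ components. The triangle inequality together with \eqref{eq:compact-tailfree-trunc-main} and \eqref{eq:compact-holder-rate-main} gives
\[
\|g-f\|_{L^{\infty}(Q_{M})}\le HC_{\alpha,d}\Big(\frac{1+dM}{n}\Big)^{\alpha/2}+2\|f\|_{\infty}\Big(\frac{dM}{n}+\frac{d}{3n^{2}}\right)+2n^{d}e^{-nM}\frac{(nM)^{N}}{N!}.
\]
We use Theorem~\ref{thm:compact-holder} rather than the modulus version, since it only needs $f$ bounded and measurable, and this matches the hypotheses here. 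The first two terms are $O(n^{-\alpha/2})$, because $\alpha/2\le 1/2<1$.

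To control the truncation term, choose $N=\lceil c\,nM\rceil$ with a fixed $c\ge e^{2}$; in particular $N\ge nM$. Stirling's bound $N!\ge (N/e)^{N}$ gives
\[
e^{-nM}\frac{(nM)^{N}}{N!}\le\Big(\frac{e\,nM}{N}\Big)^{N}\le (e/c)^{N}\le e^{-N}\le e^{-c nM}.
\]
Hence $2n^{d}e^{-nM}(nM)^{N}/N!\le 2n^{d}e^{-cMn}$. This decays faster than any power of $n$, so it is bounded by $C'n^{-\alpha/2}$ for all $n\ge1$, with $C'$ depending only on $d$, $M$ and $\alpha$. Combining the three terms, there is a constant $C_{1}$ with $\|g-f\|_{L^{\infty}(Q_{M})}\le C_{1}n^{-\alpha/2}$, and the number of components is at most $N^{d}+1\le(cnM+1)^{d}+1\le c_{2}n^{d}$ for $n\ge1$.

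It remains to convert $n$ into $K$. Given $K$, take $n=n(K)=\lfloor (K/c_{2})^{1/d}\rfloor$. This satisfies $n\ge1$ once $K\ge K_{0}:=\lceil c_{2}\rceil$, and then $c_{2}n^{d}\le K$, so $g_{K}:=\widetilde{g}_{n(K),N(K)}^{(M)}$ has at most $K$ components. Since $n\ge \tfrac12 (K/c_{2})^{1/d}$ whenever $n\ge1$, we get $n^{-\alpha/2}\le 2^{\alpha/2}c_{2}^{\alpha/(2d)}K^{-\alpha/(2d)}$, which gives the claimed bound. The only step needing care is the exponential truncation estimate: $N$ must be a constant multiple of $n$, since otherwise either the component count exceeds order $n^{d}$ or the Poisson tail is not small enough to absorb the factor $n^{d}$. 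The Stirling argument above handles this.
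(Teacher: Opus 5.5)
Your proof is correct and follows essentially the paper's route: the triangle inequality combining Proposition~\ref{prop:compact-tailfree} with Theorem~\ref{thm:compact-holder}, a truncation level $N$ linear in $n$ so that $O(n^{d})$ components are used, and then $n(K)=\lfloor (K/c_{2})^{1/d}\rfloor$ with $\lfloor t\rfloor\ge t/2$. The only difference is cosmetic. The paper takes $N_{n}=\lceil n(M+1)\rceil$ and uses Stirling asymptotically, so its bound holds only for $n\ge n_{0}$. Your choice $N=\lceil cnM\rceil$ with $c\ge e^{2}$, together with the elementary bound $N!\ge (N/e)^{N}$, gives the explicit estimate $2n^{d}e^{-cMn}$ for all $n\ge1$.
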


To obtain a global weighted sup-norm component-count rate, we require the following
shape-decay bound for Erlang kernels.

\begin{lemma}[Shape decay of Erlang kernels in sup norm]\label{lem:erlang-shape-sup}
For every $m,n\in\mathbb{N}$, 
\[
\left\lVert \tau_{m,n}\right\rVert _{L^{\infty}(\mathbb{R}_{+})}\le nm^{-1/2}.
\]
Consequently, if $m=(m_{1},\dots,m_{d})\in\mathbb{N}^{d}$ has some
coordinate $m_{j}\ge N+1$, then for every $\nu\ge0$, 
\[
\left\lVert \varphi_{m,n}\right\rVert _{\infty,\nu}\le n^{d}(N+1)^{-1/2}.
\]
\end{lemma}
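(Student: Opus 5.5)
We bound the sup norm of a single Erlang kernel by rescaling and then using Stirling's lower bound for the factorial.

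\emph{Reduction and location of the maximum.} Since $\tau_{m,n}(x)=n\,\tau_{m,1}(nx)$, we have $\|\tau_{m,n}\|_{L^\infty}=n\|\tau_{m,1}\|_{L^\infty}$, so it suffices to show $\sup_{u\ge0}u^{m-1}e^{-u}/(m-1)!\le m^{-1/2}$. For $m=1$ the supremum equals $1$ (attained at $u=0$), so the bound is trivial. For $m\ge2$, differentiating $u^{m-1}e^{-u}$ shows the maximum is at $u=m-1$, so with $k=m-1\ge1$ the quantity to bound is $k^ke^{-k}/k!$.

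\emph{Stirling step.} By the classical Robbins lower bound $k!\ge\sqrt{2\pi k}\,k^ke^{-k}$, we get $k^ke^{-k}/k!\le(2\pi k)^{-1/2}$. It remains to check $(2\pi k)^{-1/2}\le(k+1)^{-1/2}$. This is equivalent to $k+1\le2\pi k$, which holds for all $k\ge1$. This step is the only real obstacle, and it is mild: one just needs a clean version of Stirling's inequality with an explicit constant. If one prefers to avoid citing Stirling, an elementary alternative is to note that $a_k=k^ke^{-k}\sqrt{k}/k!$ is increasing in $k$ (compare successive ratios using $(1+1/k)^{k+1/2}\ge e$) and bounded by its limit $(2\pi)^{-1/2}$. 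Combined with $\sqrt{(k+1)/k}\le\sqrt2$, this gives $a_k\sqrt{(k+1)/k}\le\sqrt{2/(2\pi)}<1$.

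\emph{Multivariate consequence.} Let $m$ have some coordinate $m_j\ge N+1$. Because $w_\nu\ge1$, we have $\|\varphi_{m,n}\|_{\infty,\nu}\le\|\varphi_{m,n}\|_{L^\infty}=\prod_i\|\tau_{m_i,n}\|_{L^\infty}$. For $i\ne j$, bound each factor by $n m_i^{-1/2}\le n$. For the $j$th factor use $n m_j^{-1/2}\le n(N+1)^{-1/2}$. Multiplying the factors gives $\|\varphi_{m,n}\|_{\infty,\nu}\le n^d(N+1)^{-1/2}$.
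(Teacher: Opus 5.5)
Your proof is correct and takes the same route as the paper. Both locate the maximum of $\tau_{m,n}$ at the mode $(m-1)/n$ for $m\ge2$ and apply the Stirling lower bound to obtain $n/\sqrt{2\pi(m-1)}\le nm^{-1/2}$. Both then pass to the multivariate bound via $w_\nu\ge1$, the product structure of $\varphi_{m,n}$, and the trivial bound $n$ on the remaining factors. Your optional monotonicity argument for $k^ke^{-k}\sqrt{k}/k!$ is a valid extra but is not needed.
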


\begin{proposition}[Truncation in weighted sup norms]\label{prop:weighted-tailfree}
Let $d\in\mathbb{N}$, let $\nu\ge0$, and let $f\in\mathcal{D}_{1}(\mathbb{R}_{+}^{d})$.
Write 
\[
\mathcal{K}_{n}^{(d)}f=\sum_{m\in\mathbb{N}^{d}}a_{m,n}\,\varphi_{m,n}.
\]
For $N\in\mathbb{N}$, define 
\[
F_{N}=\{m\in\mathbb{N}^{d}:1\le m_{j}\le N\text{ for all }j=1,\dots,d\},\qquad r_{n,N}=\sum_{m\notin F_{N}}a_{m,n},
\]
set $\ell_{N}=(N+1,\dots,N+1)$, and define 
\[
\widetilde{g}_{n,N}^{(\nu)}=\sum_{m\in F_{N}}a_{m,n}\,\varphi_{m,n}+r_{n,N}\,\varphi_{\ell_{N},n}.
\]
Then $\widetilde{g}_{n,N}^{(\nu)}$ is a finite multivariate Erlang
mixture density with at most $N^{d}+1$ components, and 
\begin{equation}
\left\lVert \mathcal{K}_{n}^{(d)}f-\widetilde{g}_{n,N}^{(\nu)}\right\rVert _{\infty,\nu}\le2n^{d}(N+1)^{-1/2}.\label{eq:weighted-tailfree-trunc-main}
\end{equation}
\end{proposition}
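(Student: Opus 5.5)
We prove Proposition~\ref{prop:weighted-tailfree} by a direct comparison of the two mixtures, term by term, with Lemma~\ref{lem:erlang-shape-sup} as the only analytic input.

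\emph{The truncated object is a finite mixture density.} By Proposition~\ref{prop:explicit-mixed-erlang}, the weights satisfy $a_{m,n}\ge0$ and $\sum_{m}a_{m,n}=1$. The weights of $\widetilde{g}_{n,N}^{(\nu)}$ are $a_{m,n}$ for $m\in F_{N}$ together with $r_{n,N}\ge0$, so they are nonnegative and sum to $\sum_{m\in F_{N}}a_{m,n}+r_{n,N}=1$. The index set is $F_{N}\cup\{\ell_{N}\}$, whose cardinality is at most $N^{d}+1$. Each $\varphi_{m,n}$ is a product of Erlang densities with rate $n$, so $\widetilde{g}_{n,N}^{(\nu)}$ is a finite multivariate Erlang mixture density with common scale $1/n$.

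\emph{Reduction to the tail indices.} Subtracting, the terms with $m\in F_{N}$ cancel, and we get the pointwise identity
\[
\mathcal{K}_{n}^{(d)}f-\widetilde{g}_{n,N}^{(\nu)}=\sum_{m\notin F_{N}}a_{m,n}\varphi_{m,n}-r_{n,N}\varphi_{\ell_{N},n}.
\]
This series converges pointwise because every term is nonnegative and each $\varphi_{m,n}\le n^{d}$.

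\emph{The weighted bound.} Every $m\notin F_{N}$ has some coordinate $m_{j}\ge N+1$, and so does $\ell_{N}$. Lemma~\ref{lem:erlang-shape-sup} therefore gives $\lVert\varphi_{m,n}\rVert_{\infty,\nu}\le n^{d}(N+1)^{-1/2}$ for all of these indices. Since $w_{\nu}\ge1$, the weighted sup norm satisfies the triangle inequality for countable nonnegative combinations: pointwise we divide by $w_{\nu}(x)$ and sum. This yields
\[
\lVert\cdot\rVert_{\infty,\nu}\le\sum_{m\notin F_{N}}a_{m,n}\,n^{d}(N+1)^{-1/2}+r_{n,N}\,n^{d}(N+1)^{-1/2}=2r_{n,N}\,n^{d}(N+1)^{-1/2}.
\]
Using $r_{n,N}\le1$ gives \eqref{eq:weighted-tailfree-trunc-main}.

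\emph{Main obstacle.} There is no real obstacle once Lemma~\ref{lem:erlang-shape-sup} is available. The only point requiring care is justifying the interchange of the supremum with the infinite sum. This is handled by the pointwise bound: for each $x$, $\sum_{m\notin F_{N}}a_{m,n}\varphi_{m,n}(x)/w_{\nu}(x)\le\sum_{m\notin F_{N}}a_{m,n}\,n^{d}(N+1)^{-1/2}$, and we take the supremum only afterwards.
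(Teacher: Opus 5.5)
Your proof is correct and follows essentially the same route as the paper: cancel the $F_N$ terms, bound each remaining kernel (including $\varphi_{\ell_N,n}$) via Lemma~\ref{lem:erlang-shape-sup}, and sum against the weights to get $2r_{n,N}n^{d}(N+1)^{-1/2}\le 2n^{d}(N+1)^{-1/2}$. You also check explicitly that the truncated object is a mixture density and that the supremum is taken only after the pointwise bound; the paper leaves both of these points implicit.
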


\begin{corollary}[Component-count rate for weighted sup norms]\label{cor:weighted-component-rate}
Let $d\in\mathbb{N}$, let $\nu\ge0$, let $0<\alpha\le1$, and let
$f\in\mathcal{D}_{1}(\mathbb{R}_{+}^{d})\cap L_{\infty,\nu}(\mathbb{R}_{+}^{d})$.
Assume that $[f]_{\nu,\alpha,*}<\infty$. Then there exist constants
$C>0$ and $K_{0}\in\mathbb{N}$ such that for every integer $K\ge K_{0}$
there exists a finite multivariate Erlang mixture density $g_{K}$
with at most $K$ components and 
\[
\left\lVert g_{K}-f\right\rVert _{\infty,\nu}\le CK^{-\alpha/[2d(2d+\alpha)]}.
\]
 \end{corollary}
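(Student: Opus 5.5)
The plan is to combine the scale rate of Theorem~\ref{thm:weighted-holder-rate} with the truncation bound of Proposition~\ref{prop:weighted-tailfree}, and then choose $n$ and $N$ as functions of $K$ to balance the two errors. For any $n,N\in\mathbb{N}$, set $g=\widetilde{g}_{n,N}^{(\nu)}$. This is a finite multivariate Erlang mixture density with at most $N^{d}+1$ components. The triangle inequality gives
\[
\left\lVert g-f\right\rVert _{\infty,\nu}\le\left\lVert \mathcal{K}_{n}^{(d)}f-f\right\rVert _{\infty,\nu}+\left\lVert \mathcal{K}_{n}^{(d)}f-g\right\rVert _{\infty,\nu}\le C_{\alpha,d}[f]_{\nu,\alpha,*}\,n^{-\alpha/2}+2n^{d}(N+1)^{-1/2}.
\]
The first term comes from \eqref{eq:weighted-holder-rate-main} and the second from \eqref{eq:weighted-tailfree-trunc-main}. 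Both results apply because $f\in\mathcal{D}_{1}(\mathbb{R}_{+}^{d})\cap L_{\infty,\nu}(\mathbb{R}_{+}^{d})$ and $[f]_{\nu,\alpha,*}<\infty$.

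Next, I would balance the two terms. Write $N\asymp n^{\gamma}$. The truncation term is then of order $n^{d-\gamma/2}$. Matching it with $n^{-\alpha/2}$ requires $d-\gamma/2=-\alpha/2$, that is, $\gamma=2d+\alpha$. The number of components is $K\asymp N^{d}\asymp n^{d(2d+\alpha)}$, so $n\asymp K^{1/[d(2d+\alpha)]}$. The total error is then of order $n^{-\alpha/2}\asymp K^{-\alpha/[2d(2d+\alpha)]}$, which is exactly the claimed exponent.

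To make this rigorous with integers, I would proceed as follows. Given $K$, let $N=\lfloor (K-1)^{1/d}\rfloor$, which guarantees $N^{d}+1\le K$. Then let $n=\max\{1,\lfloor N^{1/(2d+\alpha)}\rfloor\}$. Take $K_{0}$ large enough, for example $K_{0}=2^{d}+1$, so that $N\ge2$ and $n\ge1$. For $K\ge K_{0}$ we have $N\ge\tfrac12K^{1/d}$, since $(K-1)^{1/d}-1\ge\tfrac12 K^{1/d}$ once $K$ is moderately large; $K_{0}$ should be enlarged if needed so that this holds. We also have $n\ge\tfrac12N^{1/(2d+\alpha)}$ because $N^{1/(2d+\alpha)}\ge1$.

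It remains to check the two bounds. The truncation term satisfies $n^{d}(N+1)^{-1/2}\le N^{d/(2d+\alpha)}N^{-1/2}=N^{-\alpha/[2(2d+\alpha)]}$. The scale term satisfies $n^{-\alpha/2}\le2^{\alpha/2}N^{-\alpha/[2(2d+\alpha)]}$. Substituting $N\ge\tfrac12K^{1/d}$ bounds both terms by a constant times $K^{-\alpha/[2d(2d+\alpha)]}$, with the constant depending on $d$, $\alpha$ and $[f]_{\nu,\alpha,*}$. Taking $g_{K}=\widetilde{g}_{n,N}^{(\nu)}$ finishes the proof.

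The only delicate step is this integer bookkeeping: floors must not spoil the exponents, and $n\ge1$ must hold. Handling it by taking $K_{0}$ large and absorbing factors of $2$ into $C$ should suffice.
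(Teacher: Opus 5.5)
Your proposal is correct and follows essentially the same route as the paper: the triangle inequality combining Theorem~\ref{thm:weighted-holder-rate} with Proposition~\ref{prop:weighted-tailfree}, with the truncation level chosen as $N\asymp n^{2d+\alpha}$ to balance the two error terms. The only difference is integer bookkeeping. You go from $K$ to $N=\lfloor (K-1)^{1/d}\rfloor$ and then to $n=\lfloor N^{1/(2d+\alpha)}\rfloor$, whereas the paper sets $N_n=\lceil n^{2d+\alpha}\rceil$ and inverts the component bound $Bn^{d(2d+\alpha)}$ to choose $n(K)$; your floor estimates are sound.
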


The same truncation idea also yields component-count
rates in weighted $L^{p}$ norms for $1<p<\infty$.

\begin{lemma}[Shape decay of Erlang kernels in $L^{p}$]\label{lem:erlang-shape-lp}
Let $1<p<\infty$. Then, for every $m,n\in\mathbb{N}$, 
\[
\left\lVert \tau_{m,n}\right\rVert _{L^{p}(\mathbb{R}_{+})}\le n^{1-1/p}m^{-(1-1/p)/2}.
\]
Consequently, if $m=(m_{1},\dots,m_{d})\in\mathbb{N}^{d}$ has some
coordinate $m_{j}\ge N+1$, then for every $\eta\ge0$, 
\[
\left\lVert \varphi_{m,n}\right\rVert _{p,\eta}\le n^{d(1-1/p)}(N+1)^{-(1-1/p)/2}.
\]
\end{lemma}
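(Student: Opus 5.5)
The plan is to interpolate between $L^1$ and $L^\infty$. The $L^1$ norm of $\tau_{m,n}$ is $1$, and Lemma~\ref{lem:erlang-shape-sup} gives $\|\tau_{m,n}\|_{L^\infty(\mathbb{R}_+)}\le nm^{-1/2}$. For $1<p<\infty$ we write
\[
\int_{\mathbb{R}_{+}}\tau_{m,n}(x)^{p}\,\mathrm{d}x\le\|\tau_{m,n}\|_{L^{\infty}(\mathbb{R}_{+})}^{p-1}\int_{\mathbb{R}_{+}}\tau_{m,n}(x)\,\mathrm{d}x\le\left(nm^{-1/2}\right)^{p-1}.
\]
Taking $p$th roots gives $\|\tau_{m,n}\|_{L^{p}(\mathbb{R}_{+})}\le n^{(p-1)/p}m^{-(p-1)/(2p)}=n^{1-1/p}m^{-(1-1/p)/2}$, which is the first claim. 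This step relies entirely on Lemma~\ref{lem:erlang-shape-sup}. That lemma follows from Stirling-type bounds on $e^{-k}k^{k}/k!$, namely a maximum at $x=(m-1)/n$ of size at most $n/\sqrt{2\pi(m-1)}$ when $m\ge2$, and $n$ when $m=1$. Since it is stated earlier, we may invoke it directly.

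For the multivariate consequence, fix $\eta\ge0$ and use $w_\eta\ge1$. This gives $\|\varphi_{m,n}\|_{p,\eta}\le\|\varphi_{m,n}\|_{L^p(\mathbb{R}_+^d)}$. Since $\varphi_{m,n}$ is a product, Fubini factorizes the unweighted $L^p$ norm:
\[
\|\varphi_{m,n}\|_{L^{p}(\mathbb{R}_{+}^{d})}=\prod_{i=1}^{d}\|\tau_{m_{i},n}\|_{L^{p}(\mathbb{R}_{+})}.
\]
Apply the one-dimensional bound to every factor. For $i\ne j$, use the crude estimate $m_i^{-(1-1/p)/2}\le1$, so each such factor is at most $n^{1-1/p}$. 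For the coordinate $j$ with $m_j\ge N+1$, the factor is at most $n^{1-1/p}(N+1)^{-(1-1/p)/2}$, because the exponent is negative. Multiplying the factors yields $n^{d(1-1/p)}(N+1)^{-(1-1/p)/2}$.

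No step is a real obstacle. The only point needing care is the interpolation inequality $\|g\|_p^p\le\|g\|_\infty^{p-1}\|g\|_1$ for a nonnegative probability density. The rest amounts to checking that the signs of the exponents point the right way, so that the $m_i^{-\cdots}$ terms are bounded by $1$ and monotone in $m_j$. We should also confirm that the weight only decreases the norm, which holds since $w_\eta^{-1}\le1$.
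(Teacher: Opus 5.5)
Your proposal is correct and matches the paper's proof. The paper likewise combines the interpolation bound $\|u\|_{L^{p}}\le\|u\|_{L^{1}}^{1/p}\|u\|_{L^{\infty}}^{1-1/p}$ with $\|\tau_{m,n}\|_{L^{1}}=1$ and Lemma~\ref{lem:erlang-shape-sup}, then uses $w_{\eta}\ge1$ and the Fubini factorization of the product norm, bounding the $j$th factor via $m_{j}\ge N+1$ and every other factor by $n^{1-1/p}$.
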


\begin{proposition}[Truncation in weighted $L^{p}$ norms]\label{prop:weighted-lp-tailfree}
Let $d\in\mathbb{N}$, let $1<p<\infty$, let $\eta\ge0$, and let
$f\in\mathcal{D}_{1}(\mathbb{R}_{+}^{d})$. Write 
\[
\mathcal{K}_{n}^{(d)}f=\sum_{m\in\mathbb{N}^{d}}a_{m,n}\,\varphi_{m,n}.
\]
For $N\in\mathbb{N}$, define 
\[
F_{N}=\{m\in\mathbb{N}^{d}:1\le m_{j}\le N\text{ for all }j=1,\dots,d\},\qquad r_{n,N}=\sum_{m\notin F_{N}}a_{m,n},
\]
set $\ell_{N}=(N+1,\dots,N+1)$, and define 
\[
\widetilde{g}_{n,N}^{(p,\eta)}=\sum_{m\in F_{N}}a_{m,n}\,\varphi_{m,n}+r_{n,N}\,\varphi_{\ell_{N},n}.
\]
Then $\widetilde{g}_{n,N}^{(p,\eta)}$ is a finite multivariate Erlang
mixture density with at most $N^{d}+1$ components, and 
\begin{equation}
\left\lVert \mathcal{K}_{n}^{(d)}f-\widetilde{g}_{n,N}^{(p,\eta)}\right\rVert _{p,\eta}\le2n^{d(1-1/p)}(N+1)^{-(1-1/p)/2}.\label{eq:weighted-lp-tailfree-trunc-main}
\end{equation}
\end{proposition}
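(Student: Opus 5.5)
The plan mirrors the proof of Proposition~\ref{prop:weighted-tailfree}, with Lemma~\ref{lem:erlang-shape-lp} taking the place of Lemma~\ref{lem:erlang-shape-sup}.

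\emph{Step 1: $\widetilde{g}_{n,N}^{(p,\eta)}$ is a finite mixture density.} By Proposition~\ref{prop:explicit-mixed-erlang}, applied with $p=1$ since $f\in\mathcal{D}_{1}(\mathbb{R}_{+}^{d})$, we have $a_{m,n}\ge0$ and $\sum_{m}a_{m,n}=1$. Hence the weights of $\widetilde{g}_{n,N}^{(p,\eta)}$, namely $\{a_{m,n}\}_{m\in F_{N}}\cup\{r_{n,N}\}$, are nonnegative and sum to one. The index sets $F_{N}$ and $\{\ell_{N}\}$ are disjoint and $|F_{N}|=N^{d}$, so there are at most $N^{d}+1$ components.

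\emph{Step 2: write the error as a signed series.} By \eqref{eq:multivariate-representation-main},
\[
\mathcal{K}_{n}^{(d)}f-\widetilde{g}_{n,N}^{(p,\eta)}=\sum_{m\notin F_{N}}a_{m,n}\varphi_{m,n}-r_{n,N}\varphi_{\ell_{N},n}.
\]

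\emph{Step 3: bound the norm termwise.} Every $m\notin F_{N}$ has some coordinate $m_{j}\ge N+1$, and so does $\ell_{N}$. Lemma~\ref{lem:erlang-shape-lp} therefore gives $\|\varphi_{m,n}\|_{p,\eta}\le n^{d(1-1/p)}(N+1)^{-(1-1/p)/2}$ for each such $m$ and for $\ell_{N}$. Since $\|\cdot\|_{p,\eta}$ is a norm, the triangle inequality yields
\[
\left\lVert \mathcal{K}_{n}^{(d)}f-\widetilde{g}_{n,N}^{(p,\eta)}\right\rVert_{p,\eta}\le\Big(\sum_{m\notin F_{N}}a_{m,n}+r_{n,N}\Big)\,n^{d(1-1/p)}(N+1)^{-(1-1/p)/2}.
\]
The bracket equals $2r_{n,N}\le2$, which gives \eqref{eq:weighted-lp-tailfree-trunc-main}.

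\emph{Main obstacle: justifying the termwise bound for the infinite series.} The triangle inequality for a countable sum needs the series to converge in $L_{p,\eta}$. It does converge absolutely there, because $\sum_{m\notin F_{N}}a_{m,n}\|\varphi_{m,n}\|_{p,\eta}$ is bounded by $r_{n,N}$ times the uniform constant above, and $L_{p,\eta}$ is a Banach space. The $L_{p,\eta}$ limit coincides a.e.\ with the pointwise sum, since the partial sums are nonnegative and increase pointwise (or by passing to an a.e.\ convergent subsequence). As an alternative that avoids this argument, apply Minkowski's inequality directly to the pointwise sum with nonnegative coefficients, using monotone convergence.
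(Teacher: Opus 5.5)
Your proposal is correct and follows the paper's proof. The paper pairs the terms as $\sum_{m\notin F_N}a_{m,n}(\varphi_{m,n}-\varphi_{\ell_N,n})$ where you separate them, but both bound each kernel by Lemma~\ref{lem:erlang-shape-lp} and sum the coefficients to $2r_{n,N}\le 2$. Your justification of the countable triangle inequality (absolute convergence in $L_{p,\eta}$, or Minkowski with monotone convergence) is a detail the paper leaves implicit.
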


\begin{corollary}[Component-count rate in weighted $L^{p}$ norms]\label{cor:weighted-lp-component-rate}
Let $d\in\mathbb{N}$, let $1<p<\infty$, let $\eta\ge0$, let $0<\alpha\le1$,
and let $f\in\mathcal{D}_{1}(\mathbb{R}_{+}^{d})$. Assume that there
exists a measurable function $H\in L_{p,\eta}(\mathbb{R}_{+}^{d})$
such that 
\[
\left\lvert f(y)-f(x)\right\rvert \le H(x)\left(\frac{\left\lVert y-x\right\rVert _{2}}{\sqrt{1+\left\lVert x\right\rVert _{1}}}\right)^{\alpha}\qquad(x,y\in\mathbb{R}_{+}^{d}).
\]
Then there exist constants $C>0$ and $K_{0}\in\mathbb{N}$ such that
for every integer $K\ge K_{0}$ there exists a finite multivariate
Erlang mixture density $g_{K}$ with at most $K$ components and 
\[
\left\lVert g_{K}-f\right\rVert _{p,\eta}\le CK^{-\alpha/[2d(2d+\alpha p/(p-1))]}.
\]
\end{corollary}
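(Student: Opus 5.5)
The plan is to combine the scale rate of Theorem~\ref{thm:weighted-lp-rate} with the truncation estimate of Proposition~\ref{prop:weighted-lp-tailfree}, and then choose $n$ and $N$ as functions of $K$. First, the envelope hypothesis is exactly \eqref{eq:weighted-lp-envelope-main}, so for every $n\in\mathbb{N}$
\[
\left\lVert \mathcal{K}_{n}^{(d)}f-f\right\rVert _{p,\eta}\le C_{\alpha,d}\left\lVert H\right\rVert _{p,\eta}n^{-\alpha/2}.
\]
The probabilistic representation (Proposition~\ref{prop:prob-representation}) is applicable because $f\in\mathcal{D}_{1}$ is locally integrable and the expectation is finite a.e.\ in $x$. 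Alternatively, one reads Theorem~\ref{thm:weighted-lp-rate} as stated, under its own hypotheses. Second, Proposition~\ref{prop:weighted-lp-tailfree} provides a finite mixture $\widetilde{g}_{n,N}^{(p,\eta)}$ with at most $N^{d}+1$ components and truncation error at most $2n^{d(1-1/p)}(N+1)^{-(1-1/p)/2}$. The triangle inequality then gives, with $\theta=1-1/p=1/q$,
\[
\left\lVert \widetilde{g}_{n,N}^{(p,\eta)}-f\right\rVert _{p,\eta}\le C_{1}n^{-\alpha/2}+2n^{d\theta}(N+1)^{-\theta/2}.
\]

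Next I balance the two terms. Given $K$, I take $N=\lfloor (K-1)^{1/d}\rfloor$, so that $N^{d}+1\le K$ and $N+1\ge c_{d}K^{1/d}$. The second term is then at most $C_{2}n^{d\theta}K^{-\theta/(2d)}$. Setting $n^{-\alpha/2}\asymp n^{d\theta}K^{-\theta/(2d)}$ gives $n^{\alpha/2+d\theta}\asymp K^{\theta/(2d)}$, so I choose
\[
n=\left\lfloor K^{\theta/[2d(\alpha/2+d\theta)]}\right\rfloor=\left\lfloor K^{1/[d(2d+\alpha q)]}\right\rfloor,
\]
using $(\alpha/2+d\theta)/\theta=(2d+\alpha q)/2$ with $q=p/(p-1)$. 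With this choice the first term is $n^{-\alpha/2}\asymp K^{-\alpha/[2d(2d+\alpha q)]}$, which is the claimed rate. Because the exponent of $n$ is below the balancing exponent (the floor only decreases $n$), the second term is bounded by $K^{-\theta/(2d)}\cdot K^{d\theta/[d(2d+\alpha q)]}=K^{-\alpha/[2d(2d+\alpha q)]}$ as well. This follows from the identity $\theta/(2d)-\theta/(2d+\alpha q)=\theta\alpha q/[2d(2d+\alpha q)]=\alpha/[2d(2d+\alpha q)]$, since $\theta q=1$.

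The remaining issue, and the only mildly delicate step, is the floor in $n$. Choose $K_{0}$ so that $K^{1/[d(2d+\alpha q)]}\ge2$ and $N\ge1$ for $K\ge K_{0}$. Then $n\ge\tfrac12K^{1/[d(2d+\alpha q)]}$, so $n^{-\alpha/2}\le2^{\alpha/2}K^{-\alpha/[2d(2d+\alpha q)]}$. Collecting the constants $C_{1},C_{2},c_{d}$ into $C$ and setting $g_{K}=\widetilde{g}_{n,N}^{(p,\eta)}$ completes the argument. No other step should cause difficulty.
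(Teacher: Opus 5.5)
Your proof is correct and follows the same route as the paper: apply Theorem~\ref{thm:weighted-lp-rate} directly, since its hypotheses are exactly those of the corollary, add the truncation error from Proposition~\ref{prop:weighted-lp-tailfree}, and balance the two terms. The only difference is bookkeeping. The paper fixes $n$, takes $N_{n}=\lceil n^{2d+\alpha p/(p-1)}\rceil$, and inverts the component count $N_{n}^{d}+1\le Bn^{d(2d+\alpha p/(p-1))}$ to define $n(K)$, whereas you choose $N=\lfloor (K-1)^{1/d}\rfloor$ and $n=\lfloor K^{1/[d(2d+\alpha q)]}\rfloor$ directly from $K$, which gives the same exponent.
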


\begin{remark}\label{rem:component-rate-remarks} The compact-set
exponent in Corollary~\ref{cor:compact-component-rate} is faster
because kernels whose modes lie beyond a fixed compact cube become
exponentially small on that cube. In the global weighted theory we
only use algebraic shape decay, and the resulting component-count
exponents are correspondingly slower. Corollary~\ref{cor:weighted-component-rate}
may be viewed as the case $p=\infty$ of Corollary~\ref{cor:weighted-lp-component-rate}.
By contrast, the present truncation strategy does not yield a component-count
rate in ordinary unweighted $L^{1}$, because every Erlang density
has $L^{1}$ norm equal to $1$, independently of its shape. \end{remark}

\section{Proofs and technical results}

\label{sec:proofs}

\subsection{One-dimensional input, tensorization, and qualitative approximation}

The next two lemmas transfer the one-dimensional $L^{p}$ approximation
property to the coordinatewise lifted operators.

\begin{lemma}[Coordinatewise $L^{p}$ contraction]\label{lem:coord-contraction}
Let $1\le p<\infty$, $n\in\mathbb{N}$, and $j\in\{1,\dots,d\}$.
Then $K_{n,j}$ maps $L^{p}(\mathbb{R}_{+}^{d})$ into itself and
\[
\left\lVert K_{n,j}f\right\rVert _{L^{p}(\mathbb{R}_{+}^{d})}\le\left\lVert f\right\rVert _{L^{p}(\mathbb{R}_{+}^{d})}\qquad(f\in L^{p}(\mathbb{R}_{+}^{d})).
\]
In particular, each $K_{n,j}$ is a positive linear contraction on
$L^{p}(\mathbb{R}_{+}^{d})$. \end{lemma}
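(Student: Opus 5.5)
The plan is to reduce the statement to the one-dimensional contraction in Theorem~\ref{thm:1d-input} by a Fubini--Tonelli argument. We freeze the remaining coordinates $x_{-j}$ and regard $K_{n,j}$ as the operator $K_n$ acting on the section $t\mapsto f(t,x_{-j})$.

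First, fix $f\in L^{p}(\mathbb{R}_{+}^{d})$. By Tonelli, $\int_{\mathbb{R}_{+}^{d-1}}\int_{\mathbb{R}_{+}}|f(x_j,x_{-j})|^{p}\,\mathrm{d}x_j\,\mathrm{d}x_{-j}=\|f\|_{L^p}^p<\infty$. Hence there is a null set $E\subset\mathbb{R}_{+}^{d-1}$ such that for $x_{-j}\notin E$ the section $f_{x_{-j}}:=f(\cdot,x_{-j})$ lies in $L^{p}(\mathbb{R}_{+})$. For such $x_{-j}$, definition \eqref{eq:Knj-def-main} gives $(K_{n,j}f)(x_j,x_{-j})=(K_nf_{x_{-j}})(x_j)$. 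Each cell integral $\int_{k/n}^{(k+1)/n}f_{x_{-j}}$ is finite, because $L^p$ functions are locally integrable, and the series converges.

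Second, we check measurability of $K_{n,j}f$ on $\mathbb{R}_{+}^{d}$. The map $x_{-j}\mapsto\int_{k/n}^{(k+1)/n}f(t,x_{-j})\,\mathrm{d}t$ is measurable by Fubini, since $f$ is integrable on $[k/n,(k+1)/n]\times B$ for bounded $B$. Each summand is therefore a product of this function with a continuous function of $x_j$. The series is a pointwise limit of measurable partial sums, so $K_{n,j}f$ is measurable; on $E$ we set it to $0$. This measurability bookkeeping is the only place needing care, and I regard it as the main (mild) obstacle.

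Third, we apply Tonelli together with Theorem~\ref{thm:1d-input}:
\[
\|K_{n,j}f\|_{L^p}^p=\int_{\mathbb{R}_{+}^{d-1}}\|K_nf_{x_{-j}}\|_{L^p(\mathbb{R}_+)}^p\,\mathrm{d}x_{-j}\le\int_{\mathbb{R}_{+}^{d-1}}\|f_{x_{-j}}\|_{L^p(\mathbb{R}_+)}^p\,\mathrm{d}x_{-j}=\|f\|_{L^p}^p .
\]
This shows that $K_{n,j}$ maps $L^p(\mathbb{R}^d_+)$ into itself with norm at most one. The estimate also shows that $K_{n,j}$ respects a.e.\ equality: if $f=g$ a.e., then the sections agree a.e.\ for a.e.\ $x_{-j}$, so $K_{n,j}$ is well defined on equivalence classes. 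Linearity is immediate from linearity of the integral and of the series. Positivity follows because $f\ge0$ a.e.\ makes every cell integral nonnegative for a.e.\ $x_{-j}$, and the Poisson weights $e^{-nx_j}(nx_j)^k/k!$ are nonnegative.
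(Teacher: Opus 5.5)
Your proposal is correct and follows essentially the same route as the paper: restrict to sections $f_{x_{-j}}\in L^{p}(\mathbb{R}_{+})$ for almost every $x_{-j}$, identify $(K_{n,j}f)_{x_{-j}}=K_{n}(f_{x_{-j}})$, apply the one-dimensional contraction of Theorem~\ref{thm:1d-input}, and integrate in $x_{-j}$ using Fubini--Tonelli. The extra bookkeeping you add (joint measurability of $K_{n,j}f$, convergence of the series, and well-definedness on a.e.-equivalence classes) is sound and simply makes explicit what the paper leaves implicit.
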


\begin{lemma}[Coordinatewise convergence]\label{lem:coord-convergence}
Let $1\le p<\infty$ and $j\in\{1,\dots,d\}$. Then, for every $f\in L^{p}(\mathbb{R}_{+}^{d})$,
\[
\left\lVert K_{n,j}f-f\right\rVert _{L^{p}(\mathbb{R}_{+}^{d})}\longrightarrow0\qquad(n\to\infty).
\]
\end{lemma}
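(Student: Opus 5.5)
We prove Lemma~\ref{lem:coord-convergence} by a Fubini-type slicing argument combined with dominated convergence, using only Theorem~\ref{thm:1d-input} and the contraction property of Lemma~\ref{lem:coord-contraction}. By symmetry we may take $j=1$ and write $x=(x_{1},x_{-1})$ with $x_{-1}\in\mathbb{R}_{+}^{d-1}$. For $f\in L^{p}(\mathbb{R}_{+}^{d})$, Fubini--Tonelli shows that for a.e.\ $x_{-1}$ the slice $f_{x_{-1}}:=f(\cdot,x_{-1})$ belongs to $L^{p}(\mathbb{R}_{+})$. From the definition \eqref{eq:Knj-def-main}, for such $x_{-1}$ we have $(K_{n,1}f)(\cdot,x_{-1})=K_{n}f_{x_{-1}}$. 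Hence
\[
\left\lVert K_{n,1}f-f\right\rVert _{L^{p}(\mathbb{R}_{+}^{d})}^{p}=\int_{\mathbb{R}_{+}^{d-1}}\left\lVert K_{n}f_{x_{-1}}-f_{x_{-1}}\right\rVert _{L^{p}(\mathbb{R}_{+})}^{p}\,\mathrm{d}x_{-1}.
\]
The measurability of $(x_{1},x_{-1})\mapsto(K_{n,1}f)(x)$ is implicit in Lemma~\ref{lem:coord-contraction}. One can also check it directly, since each summand is a product of a measurable function of $x_{1}$ and a measurable (Fubini) function of $x_{-1}$, and a countable sum of such functions is measurable.

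Next we apply Theorem~\ref{thm:1d-input} slice by slice. For a.e.\ $x_{-1}$ it gives $\lVert K_{n}f_{x_{-1}}-f_{x_{-1}}\rVert_{L^{p}(\mathbb{R}_{+})}\to0$, so the integrand converges to zero pointwise a.e. For a dominating function, the one-dimensional contraction property and the triangle inequality give
\[
\left\lVert K_{n}f_{x_{-1}}-f_{x_{-1}}\right\rVert _{L^{p}(\mathbb{R}_{+})}^{p}\le2^{p}\left\lVert f_{x_{-1}}\right\rVert _{L^{p}(\mathbb{R}_{+})}^{p}.
\]
The right-hand side is integrable in $x_{-1}$, with integral $2^{p}\lVert f\rVert_{L^{p}(\mathbb{R}_{+}^{d})}^{p}$ by Tonelli. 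Dominated convergence then yields the claim.

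The main obstacle is the care needed in the slicing step, namely checking that the slice identity $(K_{n,1}f)(\cdot,x_{-1})=K_{n}f_{x_{-1}}$ holds off a null set that does not depend on $n$. This is fine because the exceptional set is simply $\{x_{-1}: f_{x_{-1}}\notin L^{p}\}$ (together with a choice of representative), and it is independent of $n$; countably many $n$ would be harmless in any case. If one prefers to avoid measurability bookkeeping, an alternative is a density argument. One proves convergence for finite sums of products $g(x_{1})h(x_{-1})$, where it is immediate from Theorem~\ref{thm:1d-input} since $K_{n,1}(g\otimes h)=(K_{n}g)\otimes h$. One then extends to all of $L^{p}$ by the uniform bound $\lVert K_{n,1}\rVert\le1$ from Lemma~\ref{lem:coord-contraction}, using an $\varepsilon/3$ argument. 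I would present the dominated convergence route as the main proof and mention the density route as a check.
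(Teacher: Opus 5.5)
Your proposal is correct and follows essentially the same route as the paper: you slice via Fubini, apply Theorem~\ref{thm:1d-input} on almost every slice $f_{x_{-j}}$, and conclude by dominated convergence with the dominating function $2^{p}\lVert f_{x_{-j}}\rVert_{L^{p}(\mathbb{R}_{+})}^{p}$ coming from the one-dimensional contraction. The density argument via tensor products $g\otimes h$ that you mention as a check is also valid, but it is not needed.
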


\begin{lemma}[Uniform $L^{p}$ bound for Erlang kernels]\label{lem:kernel-bound}
Let $1\le p<\infty$ and $m,n\in\mathbb{N}$. Then
\[
0\le\tau_{m,n}(x)\le n\qquad(x\ge0),
\]
and therefore 
\[
\left\lVert \tau_{m,n}\right\rVert _{L^{p}(\mathbb{R}_{+})}\le n^{1-1/p}.
\]
Consequently, for every $m=(m_{1},\dots,m_{d})\in\mathbb{N}^{d}$,
\begin{equation}
\left\|\prod_{j=1}^{d}\tau_{m_{j},n}(\cdot_{j})\right\|_{L^{p}(\mathbb{R}_{+}^{d})}\le n^{d(1-1/p)}.\label{eq:product-bound-main}
\end{equation}
\end{lemma}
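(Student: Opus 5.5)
The plan is to bound the kernel pointwise by its maximum, and then obtain the $L^{p}$ bounds by interpolating between the sup bound and the fact that $\tau_{m,n}$ integrates to one. The product statement then follows from Fubini.

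\emph{Pointwise bound.} Nonnegativity is immediate from \eqref{eq:erlang-density-n}. For the upper bound, write
\[
\tau_{m,n}(x)=n\,e^{-nx}\frac{(nx)^{m-1}}{(m-1)!}.
\]
The factor $e^{-nx}(nx)^{m-1}/(m-1)!$ is the Poisson probability $\mathbb{P}(N=m-1)$ for $N\sim\mathrm{Poisson}(nx)$, so it is at most $1$. Equivalently, this is one term of the exponential series $\sum_{k\ge0}(nx)^{k}/k!=e^{nx}$, all of whose terms are nonnegative. Hence $0\le\tau_{m,n}(x)\le n$ for all $x\ge0$.

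\emph{One-dimensional $L^{p}$ bound.} Since $\tau_{m,n}$ is a probability density, $\int_{0}^{\infty}\tau_{m,n}=1$. This can be checked by the Gamma integral $\int_{0}^{\infty}u^{m-1}e^{-u}\,\mathrm{d}u=(m-1)!$ after substituting $u=nx$. Writing $\tau^{p}=\tau\cdot\tau^{p-1}\le n^{p-1}\tau$ then gives
\[
\int_{0}^{\infty}\tau_{m,n}^{p}\le n^{p-1}\int_{0}^{\infty}\tau_{m,n}=n^{p-1},
\]
so $\|\tau_{m,n}\|_{L^{p}(\mathbb{R}_{+})}\le n^{1-1/p}$. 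The case $p=1$ is trivially covered, with equality.

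\emph{Product bound.} For $m\in\mathbb{N}^{d}$, the $p$-th power of the product factorizes, $\bigl|\prod_{j}\tau_{m_{j},n}(x_{j})\bigr|^{p}=\prod_{j}\tau_{m_{j},n}(x_{j})^{p}$. By Tonelli, the integral over $\mathbb{R}_{+}^{d}$ is therefore the product of the one-dimensional integrals, each at most $n^{p-1}$. Taking $p$-th roots yields \eqref{eq:product-bound-main}.

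None of these steps is a real obstacle. The only point needing care is the elementary bound $e^{-u}u^{k}/k!\le1$, which follows from the series argument above.
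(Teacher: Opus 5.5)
Your proposal is correct and follows essentially the same route as the paper. The paper also bounds $e^{-y}y^{m-1}/(m-1)!\le1$ by comparing it with a single term of the exponential series, then uses $\tau_{m,n}^{p}\le n^{p-1}\tau_{m,n}$ together with $\int_{0}^{\infty}\tau_{m,n}=1$, and obtains the product bound by Fubini's theorem.
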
 
\begin{proof}[Proof of Theorem~\ref{thm:1d-input}]
The positivity and linearity are immediate from \eqref{eq:Kn-def-main}.
For the contraction property, write 
\[
p_{n,k}(x)=e^{-nx}\frac{(nx)^{k}}{k!},\qquad I_{k}=\left[\frac{k}{n},\frac{k+1}{n}\right),\qquad k\in\mathbb{N}_{0}.
\]
Since $\sum_{k=0}^{\infty}p_{n,k}(x)=1$, Jensen's inequality and then
Jensen's inequality on each interval $I_{k}$ with respect to the probability
measure $n\,\mathrm{d}t$ give 
\begin{align*}
|K_{n}g(x)|^{p} & \le\sum_{k=0}^{\infty}p_{n,k}(x)\left(n\int_{I_{k}}|g(t)|\,\mathrm{d}t\right)^{p}\\
 & \le\sum_{k=0}^{\infty}p_{n,k}(x)\,n\int_{I_{k}}|g(t)|^{p}\,\mathrm{d}t.
\end{align*}
Integrating over $x\in\mathbb{R}_{+}$ and using Tonelli's theorem, we obtain
\begin{align*}
\|K_{n}g\|_{L^{p}(\mathbb{R}_{+})}^{p} & \le\sum_{k=0}^{\infty}\left(\int_{0}^{\infty}p_{n,k}(x)\,\mathrm{d}x\right)\,n\int_{I_{k}}|g(t)|^{p}\,\mathrm{d}t\\
 & =\sum_{k=0}^{\infty}\int_{I_{k}}|g(t)|^{p}\,\mathrm{d}t=\|g\|_{L^{p}(\mathbb{R}_{+})}^{p},
\end{align*}
because $\int_{0}^{\infty}p_{n,k}(x)\,\mathrm{d}x=1/n$. Thus $K_{n}$ is a positive
linear contraction on $L^{p}(\mathbb{R}_{+})$. The remaining strong $L^{p}$
convergence assertion is exactly Theorem~3.5 of
\cite{AltomareCappellettiLeonessa2013}, specialized to the choice
$a_{n}=0$ and $b_{n}=1$, for which the generalized operators in that
paper coincide with $K_{n}$. This proves the theorem. 
\end{proof}
\begin{proof}[Proof of Proposition~\ref{prop:1d-representation}]
The positivity of the coefficients in \eqref{eq:1d-cell-mass} is
immediate. Since the intervals $[(m-1)/n,m/n)$ partition $\mathbb{R}_{+}$
up to endpoints, 
\[
\sum_{m=1}^{\infty}a_{m,n}=\sum_{m=1}^{\infty}\int_{(m-1)/n}^{m/n}f(t)\,\mathrm{d}t=\int_{0}^{\infty}f(t)\,\mathrm{d}t=1.
\]
Re-indexing \eqref{eq:Kn-def-main} with $m=k+1$ gives 
\begin{align*}
(K_{n}f)(x) & =n\sum_{k=0}^{\infty}e^{-nx}\frac{(nx)^{k}}{k!}\int_{k/n}^{(k+1)/n}f(t)\,\mathrm{d}t\\
 & =\sum_{m=1}^{\infty}\left(\int_{(m-1)/n}^{m/n}f(t)\,\mathrm{d}t\right)ne^{-nx}\frac{(nx)^{m-1}}{(m-1)!}\\
 & =\sum_{m=1}^{\infty}a_{m,n}\,\tau_{m,n}(x),
\end{align*}
which is exactly \eqref{eq:1d-mixed-erlang-representation}. 
\end{proof}
\begin{proof}[Proof of Lemma~\ref{lem:coord-contraction}]
Fix $f\in L^{p}(\mathbb{R}_{+}^{d})$. By Fubini's theorem, for almost
every $x_{-j}\in\mathbb{R}_{+}^{d-1}$ the slice $f_{x_{-j}}(t)=f(x_{1},\dots,x_{j-1},t,x_{j+1},\dots,x_{d})$
belongs to $L^{p}(\mathbb{R}_{+})$. For such $x_{-j}$ we have $(K_{n,j}f)_{x_{-j}}=K_{n}(f_{x_{-j}})$.
Theorem~\ref{thm:1d-input} therefore gives 
\[
\int_{0}^{\infty}\left\lvert (K_{n,j}f)(x_{j},x_{-j})\right\rvert ^{p}\,\mathrm{d}x_{j}=\left\lVert K_{n}(f_{x_{-j}})\right\rVert _{L^{p}(\mathbb{R}_{+})}^{p}\le\left\lVert f_{x_{-j}}\right\rVert _{L^{p}(\mathbb{R}_{+})}^{p}.
\]
Integrating with respect to $x_{-j}$ and using Fubini's theorem once more yields
\begin{align*}
\left\lVert K_{n,j}f\right\rVert _{L^{p}(\mathbb{R}_{+}^{d})}^{p} & =\int_{\mathbb{R}_{+}^{d-1}}\int_{0}^{\infty}\left\lvert (K_{n,j}f)(x_{j},x_{-j})\right\rvert ^{p}\,\mathrm{d}x_{j}\,\mathrm{d}x_{-j}\\
 & \le\int_{\mathbb{R}_{+}^{d-1}}\int_{0}^{\infty}\left\lvert f(x_{j},x_{-j})\right\rvert ^{p}\,\mathrm{d}x_{j}\,\mathrm{d}x_{-j}=\left\lVert f\right\rVert _{L^{p}(\mathbb{R}_{+}^{d})}^{p}.
\end{align*}
The positivity and linearity are immediate from \eqref{eq:Knj-def-main}. 
\end{proof}
\begin{proof}[Proof of Lemma~\ref{lem:coord-convergence}]
Fix $f\in L^{p}(\mathbb{R}_{+}^{d})$ and $j\in\{1,\dots,d\}$. For
almost every $x_{-j}$ we again have $f_{x_{-j}}\in L^{p}(\mathbb{R}_{+})$
and $(K_{n,j}f)_{x_{-j}}=K_{n}(f_{x_{-j}})$. Therefore, by Theorem~\ref{thm:1d-input},
\[
\left\lVert K_{n}(f_{x_{-j}})-f_{x_{-j}}\right\rVert _{L^{p}(\mathbb{R}_{+})}\longrightarrow0\qquad(n\to\infty)
\]
for almost every $x_{-j}$. To pass to the outer integral, note that
the contraction property gives 
\begin{align*}
\left\lVert K_{n}(f_{x_{-j}})-f_{x_{-j}}\right\rVert _{L^{p}(\mathbb{R}_{+})}^{p} & \le2^{p-1}\left(\left\lVert K_{n}(f_{x_{-j}})\right\rVert _{L^{p}(\mathbb{R}_{+})}^{p}+\left\lVert f_{x_{-j}}\right\rVert _{L^{p}(\mathbb{R}_{+})}^{p}\right)\\
 & \le2^{p}\left\lVert f_{x_{-j}}\right\rVert _{L^{p}(\mathbb{R}_{+})}^{p}.
\end{align*}
The function $x_{-j}\mapsto\left\lVert f_{x_{-j}}\right\rVert _{L^{p}(\mathbb{R}_{+})}^{p}$
is integrable by Fubini's theorem, so dominated convergence yields 
\[
\left\lVert K_{n,j}f-f\right\rVert _{L^{p}(\mathbb{R}_{+}^{d})}^{p}=\int_{\mathbb{R}_{+}^{d-1}}\left\lVert K_{n}(f_{x_{-j}})-f_{x_{-j}}\right\rVert _{L^{p}(\mathbb{R}_{+})}^{p}\,\mathrm{d}x_{-j}\longrightarrow0.
\]
\end{proof}
\begin{proof}[Proof of Theorem~\ref{thm:tensor-convergence}]
By Lemma~\ref{lem:coord-contraction}, each $K_{n,j}$ is a positive
linear contraction on $L^{p}(\mathbb{R}_{+}^{d})$, hence so is their
composition $\mathcal{K}_{n}^{(d)}$. For convergence, write the telescoping
identity 
\begin{align*}
\mathcal{K}_{n}^{(d)}-I & =K_{n,1}\cdots K_{n,d}-I\\
 & =K_{n,1}\cdots K_{n,d-1}(K_{n,d}-I)+K_{n,1}\cdots K_{n,d-2}(K_{n,d-1}-I)+\cdots+(K_{n,1}-I).
\end{align*}
Applying this to $f$ and using that every preceding product is a
contraction, we obtain 
\begin{align*}
\left\lVert \mathcal{K}_{n}^{(d)}f-f\right\rVert _{L^{p}} & \le\sum_{j=1}^{d}\left\lVert K_{n,j}f-f\right\rVert _{L^{p}}.
\end{align*}
Each term on the right tends to $0$ by Lemma~\ref{lem:coord-convergence},
and the result follows. 
\end{proof}
\begin{proof}[Proof of Proposition~\ref{prop:explicit-mixed-erlang}]
The coefficients in \eqref{eq:cell-masses-main} are nonnegative
because $f\ge0$ a.e. Since the rectangles $Q_{m,n}$ partition $\mathbb{R}_{+}^{d}$
up to boundaries, $\sum_{m\in\mathbb{N}^{d}}a_{m,n}=\sum_{m\in\mathbb{N}^{d}}\int_{Q_{m,n}}f(t)\,\mathrm{d}t=\int_{\mathbb{R}_{+}^{d}}f(t)\,\mathrm{d}t=1$.
A repeated application of Tonelli's theorem shows that 
\begin{align*}
(\mathcal{K}_{n}^{(d)}f)(x) & =n^{d}\sum_{k\in\mathbb{N}_{0}^{d}}\left(\prod_{j=1}^{d}e^{-nx_{j}}\frac{(nx_{j})^{k_{j}}}{k_{j}!}\right)\int_{\prod_{j=1}^{d}[k_{j}/n,(k_{j}+1)/n)}f(t)\,\mathrm{d}t.
\end{align*}
Substituting $m_{j}=k_{j}+1$ for each coordinate transforms the cell
into $Q_{m,n}$ and the product factor into $n^{d}\prod_{j=1}^{d}e^{-nx_{j}}\frac{(nx_{j})^{k_{j}}}{k_{j}!}=\prod_{j=1}^{d}ne^{-nx_{j}}\frac{(nx_{j})^{m_{j}-1}}{(m_{j}-1)!}=\prod_{j=1}^{d}\tau_{m_{j},n}(x_{j})$.
This proves \eqref{eq:multivariate-representation-main}. 
\end{proof}
\begin{proof}[Proof of Corollary~\ref{cor:countable}]
Combine Theorem~\ref{thm:tensor-convergence} with Proposition~\ref{prop:explicit-mixed-erlang}. 
\end{proof}
\begin{proof}[Proof of Corollary~\ref{cor:compact-finite}]
If some coordinate $m_{j}$ satisfies $m_{j}\ge\lfloor nM\rfloor+2$,
then the interval $[(m_{j}-1)/n,m_{j}/n)$ lies to the right of $M$,
so the cell $Q_{m,n}$ is disjoint from $Q_{M}$ in the $j$th coordinate.
Since $f$ vanishes a.e.\ outside $Q_{M}$, the corresponding coefficient
$a_{m,n}$ is zero. Therefore only finitely many indices can appear
in \eqref{eq:multivariate-representation-main}. 
\end{proof}
\begin{proof}[Proof of Lemma~\ref{lem:kernel-bound}]
Since $e^{y}=\sum_{r=0}^{\infty}y^{r}/r!\ge y^{m-1}/(m-1)!$ for
$y\ge0$, we have $e^{-y}y^{m-1}/(m-1)!\le1$ for all $y\ge0$. Taking
$y=nx$ gives $\tau_{m,n}(x)\le n$, and therefore $\left\lVert \tau_{m,n}\right\rVert _{L^{p}(\mathbb{R}_{+})}^{p}=\int_{0}^{\infty}\tau_{m,n}(x)^{p}\,\mathrm{d}x\le n^{p-1}\int_{0}^{\infty}\tau_{m,n}(x)\,\mathrm{d}x=n^{p-1}$,
because $\tau_{m,n}$ is a probability density. This proves the one-dimensional
estimate. The product bound \eqref{eq:product-bound-main} follows
by Fubini's theorem. 
\end{proof}
\begin{proof}[Proof of Theorem~\ref{thm:finite-approx}]
Fix $f\in\mathcal{D}_{p}(\mathbb{R}_{+}^{d})$ and write $\mathcal{K}_{n}^{(d)}f=\sum_{m\in\mathbb{N}^{d}}a_{m,n}\,\varphi_{m,n}$,
where $\varphi_{m,n}(x)=\prod_{j=1}^{d}\tau_{m_{j},n}(x_{j})$. For
$N\in\mathbb{N}$, define the finite index set $F_{N}=\{m\in\mathbb{N}^{d}:1\le m_{j}\le N\text{ for all }j=1,\dots,d\}$
and the tail mass $r_{n,N}=\sum_{m\notin F_{N}}a_{m,n}$. Because
the coefficients are nonnegative and sum to $1$, we have $r_{n,N}\downarrow0$
as $N\to\infty$ for each fixed $n$. Define the finite mixture 
\begin{equation}
g_{n,N}=\sum_{m\in F_{N}}a_{m,n}\,\varphi_{m,n}+r_{n,N}\,\varphi_{\mathbf{1},n},\label{eq:gnN-main}
\end{equation}
where $\mathbf{1}=(1,\dots,1)\in\mathbb{N}^{d}$. Then $g_{n,N}$
is a finite multivariate Erlang mixture density.

Since $\mathcal{K}_{n}^{(d)}f-g_{n,N}=\sum_{m\notin F_{N}}a_{m,n}(\varphi_{m,n}-\varphi_{\mathbf{1},n})$,
Lemma~\ref{lem:kernel-bound} gives 
\begin{align*}
\left\lVert \mathcal{K}_{n}^{(d)}f-g_{n,N}\right\rVert _{L^{p}} & \le\sum_{m\notin F_{N}}a_{m,n}\left(\left\lVert \varphi_{m,n}\right\rVert _{L^{p}}+\left\lVert \varphi_{\mathbf{1},n}\right\rVert _{L^{p}}\right)\\
 & \le2n^{d(1-1/p)}\sum_{m\notin F_{N}}a_{m,n}=2n^{d(1-1/p)}r_{n,N}.
\end{align*}
For each fixed $n$, choose $N(n)$ so large that $\left\lVert \mathcal{K}_{n}^{(d)}f-g_{n,N(n)}\right\rVert _{L^{p}(\mathbb{R}_{+}^{d})}\le1/n$.
Set $g_{n}=g_{n,N(n)}$. Then each $g_{n}$ is a finite multivariate
Erlang mixture density with common scale $1/n$, and 
\begin{align*}
\left\lVert g_{n}-f\right\rVert _{L^{p}} & \le\left\lVert g_{n}-\mathcal{K}_{n}^{(d)}f\right\rVert _{L^{p}}+\left\lVert \mathcal{K}_{n}^{(d)}f-f\right\rVert _{L^{p}}\\
 & \le\frac{1}{n}+\left\lVert \mathcal{K}_{n}^{(d)}f-f\right\rVert _{L^{p}}\longrightarrow0
\end{align*}
by Theorem~\ref{thm:tensor-convergence}. 
\end{proof}
\begin{proof}[Proof of Theorem~\ref{thm:weighted-qualitative}]
Fix $x\in\mathbb{R}_{+}^{d}$ and write $Y=Y_{n,x}$ and $\Delta=Y-x$.
Let $A_{n,\delta}(x)=\{\left\lVert \Delta\right\rVert _{2}>\delta\sqrt{1+\left\lVert x\right\rVert _{1}}\}$.
Since $f\in L_{\infty,\nu}(\mathbb{R}_{+}^{d})$ and Lemma~\ref{lem:weighted-moments}
gives $\mathbb{E}[w_{\nu}(Y)]<\infty$, we have $\mathbb{E}[|f(Y)|]<\infty$.
Hence Proposition~\ref{prop:prob-representation} applies and $(\mathcal{K}_{n}^{(d)}f)(x)-f(x)=\mathbb{E}[f(Y)-f(x)]$.
On the near event $A_{n,\delta}(x)^{c}$, the definition of $\Omega_{\nu,*}(f;\delta)$
gives $\left\lvert f(Y)-f(x)\right\rvert /w_{\nu}(x)\le\Omega_{\nu,*}(f;\delta)$,
and therefore $\frac{1}{w_{\nu}(x)}\mathbb{E}\left[\left\lvert f(Y)-f(x)\right\rvert \mathbf{1}_{A_{n,\delta}(x)^{c}}\right]\le\Omega_{\nu,*}(f;\delta)$.
On the far event, the weighted norm gives $\left\lvert f(Y)-f(x)\right\rvert \le\left\lvert f(Y)\right\rvert +\left\lvert f(x)\right\rvert \le\left\lVert f\right\rVert _{\infty,\nu}\left(w_{\nu}(Y)+w_{\nu}(x)\right)$,
hence 
\begin{align*}
\frac{1}{w_{\nu}(x)}\mathbb{E}\left[\left\lvert f(Y)-f(x)\right\rvert \mathbf{1}_{A_{n,\delta}(x)}\right] & \le\left\lVert f\right\rVert _{\infty,\nu}\left(\mathbb{P}(A_{n,\delta}(x))+\frac{\mathbb{E}[w_{\nu}(Y)\mathbf{1}_{A_{n,\delta}(x)}]}{w_{\nu}(x)}\right).
\end{align*}
By Lemma~\ref{lem:displacement-moments}, 
\begin{align*}
\mathbb{P}(A_{n,\delta}(x)) & \le\frac{\mathbb{E}[\left\lVert \Delta\right\rVert _{2}^{2}]}{\delta^{2}(1+\left\lVert x\right\rVert _{1})}\le\frac{1+d/3}{n\delta^{2}}.
\end{align*}
For the weighted expectation term, Cauchy--Schwarz and Lemma~\ref{lem:weighted-moments}
at the index $2\nu$ yield 
\begin{align*}
\frac{\mathbb{E}[w_{\nu}(Y)\mathbf{1}_{A_{n,\delta}(x)}]}{w_{\nu}(x)} & \le\frac{\mathbb{E}[w_{2\nu}(Y)]^{1/2}}{w_{\nu}(x)}\,\mathbb{P}(A_{n,\delta}(x))^{1/2}\\
 & \le\sqrt{A_{2\nu,d}}\,\mathbb{P}(A_{n,\delta}(x))^{1/2}\\
 & \le\frac{\sqrt{A_{2\nu,d}(1+d/3)}}{\sqrt{n}\,\delta}.
\end{align*}
Combining the near- and far-event estimates proves \eqref{eq:qualitative-weighted-estimate-main}.
To obtain convergence, fix $\delta>0$ and let $n\to\infty$, then
let $\delta\downarrow0$ and use \eqref{eq:intrinsic-modulus-zero-main}. 
\end{proof}
\begin{proof}[Proof of Corollary~\ref{cor:global-uniform-c0}]
The rate bound is immediate from Theorem~\ref{thm:weighted-holder-rate}
with $\nu=0$. For the qualitative convergence, it is enough to verify
that $\Omega_{0,*}(f;\delta)\to0$ as $\delta\downarrow0$, because
then Theorem~\ref{thm:weighted-qualitative} applies with $\nu=0$.

Fix $\varepsilon>0$. Since $f\in C_{0}(\mathbb{R}_{+}^{d})$, there
exists $R>0$ such that 
\[
\left\lvert f(z)\right\rvert <\frac{\varepsilon}{4}\qquad\text{whenever }\left\lVert z\right\rVert _{1}\ge\frac{R}{2}.
\]
Increase $R$, if necessary, so that 
\[
\sqrt{d}\sqrt{1+s}\le\frac{s}{2}\qquad(s\ge R).
\]
Set 
\[
K=\left\{ z\in\mathbb{R}_{+}^{d}:\left\lVert z\right\rVert _{1}\le R+\sqrt{d}\sqrt{1+R}\right\}.
\]
This set is compact, so $f$ is uniformly continuous on $K$. Choose
$\eta>0$ such that 
\[
\left\lvert f(y)-f(x)\right\rvert <\frac{\varepsilon}{2}\qquad(x,y\in K,\ \left\lVert y-x\right\rVert _{2}\le\eta).
\]
Now choose $0<\delta\le1$ so small that $\delta\sqrt{1+R}\le\eta$.

Let $x,y\in\mathbb{R}_{+}^{d}$ satisfy $\left\lVert y-x\right\rVert _{2}\le\delta\sqrt{1+\left\lVert x\right\rVert _{1}}$.
If $\left\lVert x\right\rVert _{1}\le R$, then $\left\lVert y-x\right\rVert _{2}\le\eta$
and 
\[
\left\lVert y\right\rVert _{1}\le\left\lVert x\right\rVert _{1}+\left\lVert y-x\right\rVert _{1}\le R+\sqrt{d}\left\lVert y-x\right\rVert _{2}\le R+\sqrt{d}\sqrt{1+R},
\]
so $x,y\in K$, and therefore $\left\lvert f(y)-f(x)\right\rvert <\varepsilon/2$.

If instead $\left\lVert x\right\rVert _{1}\ge R$, then, since $\delta\le1$,
\[
\left\lVert y\right\rVert _{1}\ge\left\lVert x\right\rVert _{1}-\left\lVert y-x\right\rVert _{1}\ge\left\lVert x\right\rVert _{1}-\sqrt{d}\left\lVert y-x\right\rVert _{2}\ge\left\lVert x\right\rVert _{1}-\sqrt{d}\sqrt{1+\left\lVert x\right\rVert _{1}}\ge\frac{\left\lVert x\right\rVert _{1}}{2}\ge\frac{R}{2}.
\]
Hence both $x$ and $y$ lie in the region where $\left\lvert f\right\rvert <\varepsilon/4$,
so again $\left\lvert f(y)-f(x)\right\rvert <\varepsilon/2$.

We have proved that $\Omega_{0,*}(f;\delta)\le\varepsilon/2$. Since
$\varepsilon>0$ was arbitrary, $\Omega_{0,*}(f;\delta)\to0$ as $\delta\downarrow0$.
Theorem~\ref{thm:weighted-qualitative} now gives the global uniform
convergence. Finally, because $\mathcal{K}_{n}^{(d)}1=1$, the same
argument applies to every bounded continuous $f$ such that $f-L\in C_{0}(\mathbb{R}_{+}^{d})$
for some constant $L\in\mathbb{R}$. 
\end{proof}
\begin{proof}[Proof of Corollary~\ref{cor:weighted-finite}]
Write $\mathcal{K}_{n}^{(d)}f=\sum_{m\in\mathbb{N}^{d}}a_{m,n}\,\varphi_{m,n}$,
where $\varphi_{m,n}(x)=\prod_{j=1}^{d}\tau_{m_{j},n}(x_{j})$. For
$N\in\mathbb{N}$, let $F_{N}=\{m\in\mathbb{N}^{d}:1\le m_{j}\le N\text{ for all }j=1,\dots,d\}$,
set $r_{n,N}=\sum_{m\notin F_{N}}a_{m,n}$, and define
\[
g_{n,N}=\sum_{m\in F_{N}}a_{m,n}\,\varphi_{m,n}+r_{n,N}\,\varphi_{\mathbf{1},n},
\qquad \mathbf{1}=(1,\dots,1)\in\mathbb{N}^{d}.
\]
Because $w_{\nu}(x)\ge1$ for all $x$ and $\tau_{m_{j},n}\le n$,
every kernel satisfies $\left\lVert \varphi_{m,n}\right\rVert _{\infty,\nu}\le\left\lVert \varphi_{m,n}\right\rVert _{L^{\infty}(\mathbb{R}_{+}^{d})}\le n^{d}$.
Hence
\begin{align*}
\left\lVert \mathcal{K}_{n}^{(d)}f-g_{n,N}\right\rVert _{\infty,\nu}
&\le\sum_{m\notin F_{N}}a_{m,n}\left(\left\lVert \varphi_{m,n}\right\rVert _{\infty,\nu}+\left\lVert \varphi_{\mathbf{1},n}\right\rVert _{\infty,\nu}\right)\\
&\le 2n^{d}r_{n,N}.
\end{align*}
Since $r_{n,N}\downarrow0$ as $N\to\infty$ for each fixed $n$, we may choose
$N(n)$ so large that the right-hand side is at most $1/n$, and then set
$g_{n}=g_{n,N(n)}$. It follows that
\[
\left\lVert g_{n}-f\right\rVert _{\infty,\nu}\le\left\lVert g_{n}-\mathcal{K}_{n}^{(d)}f\right\rVert _{\infty,\nu}+\left\lVert \mathcal{K}_{n}^{(d)}f-f\right\rVert _{\infty,\nu}\longrightarrow0.
\]
\end{proof}

\subsection{Probabilistic preliminaries and scale-rate proofs}
\begin{proof}[Proof of Proposition~\ref{prop:prob-representation}]
Fix $x\in\mathbb{R}_{+}^{d}$ and a measurable, locally integrable
$f$ such that $\mathbb{E}[|f(Y_{n,x})|]<\infty$. Since 
\[
\sum_{k\in\mathbb{N}_{0}^{d}}\mathbb{P}(N=k)\int_{[0,1]^{d}}\left|f\!\left(\frac{k+u}{n}\right)\right|\,\mathrm{d}u=\mathbb{E}[|f(Y_{n,x})|]<\infty,
\]
Fubini's theorem applies. Conditioning on the Poisson vector $N=(N_{1},\dots,N_{d})$
and then integrating over the independent uniforms gives 
\begin{align*}
\mathbb{E}[f(Y_{n,x})] & =\sum_{k\in\mathbb{N}_{0}^{d}}\mathbb{P}(N=k)\int_{[0,1]^{d}}f\!\left(\frac{k+u}{n}\right)\,\mathrm{d}u\\
 & =\sum_{k\in\mathbb{N}_{0}^{d}}\left(\prod_{j=1}^{d}e^{-nx_{j}}\frac{(nx_{j})^{k_{j}}}{k_{j}!}\right)\int_{[0,1]^{d}}f\!\left(\frac{k+u}{n}\right)\,\mathrm{d}u.
\end{align*}
Changing variables componentwise by $t_{j}=(k_{j}+u_{j})/n$ yields
$\mathrm{d}u=n^{d}\,\mathrm{d}t$ and carries $[0,1]^{d}$ onto the cell $\prod_{j=1}^{d}[k_{j}/n,(k_{j}+1)/n)$.
Hence 
\begin{align*}
\mathbb{E}[f(Y_{n,x})] & =n^{d}\sum_{k\in\mathbb{N}_{0}^{d}}\left(\prod_{j=1}^{d}e^{-nx_{j}}\frac{(nx_{j})^{k_{j}}}{k_{j}!}\right)\int_{\prod_{j=1}^{d}[k_{j}/n,(k_{j}+1)/n)}f(t)\,\mathrm{d}t.
\end{align*}
The same computation with $|f|$ shows that the series on the right-hand
side is absolutely convergent. This is exactly the cell expansion obtained
by iterating the one-dimensional definition \eqref{eq:Knj-def-main}, and
therefore proves \eqref{eq:prob-representation-main}. 
\end{proof}
\begin{proof}[Proof of Lemma~\ref{lem:displacement-moments}]
Fix $j\in\{1,\dots,d\}$. Since $Y_{n,x,j}=N_{j}/n+U_{j}/n$, with
$\mathbb{E}[N_{j}]=nx_{j}$, $\mathrm{Var}(N_{j})=nx_{j}$, $\mathbb{E}[U_{j}]=1/2$,
and $\mathrm{Var}(U_{j})=1/12$, we have $\mathbb{E}[Y_{n,x,j}-x_{j}]=1/(2n)$
and $\mathrm{Var}(Y_{n,x,j})=(\mathrm{Var}(N_{j})+\mathrm{Var}(U_{j}))/n^{2}=x_{j}/n+1/(12n^{2})$.
Therefore, $\mathbb{E}[(Y_{n,x,j}-x_{j})^{2}]=\mathrm{Var}(Y_{n,x,j})+\left(\mathbb{E}[Y_{n,x,j}-x_{j}]\right)^{2}=x_{j}/n+1/(3n^{2})$.
Summing over $j$ gives \eqref{eq:second-moment-displacement-main}.
If $0<r\le2$, then $t\mapsto t^{r/2}$ is concave on $[0,\infty)$,
so Jensen's inequality implies 
\[
\mathbb{E}\left[\left\lVert \Delta_{n,x}\right\rVert _{2}^{r}\right]\le\mathbb{E}\left[\left\lVert \Delta_{n,x}\right\rVert _{2}^{2}\right]^{r/2}=\left(\frac{\left\lVert x\right\rVert _{1}}{n}+\frac{d}{3n^{2}}\right)^{r/2}\le(1+d/3)^{r/2}\left(\frac{1+\left\lVert x\right\rVert _{1}}{n}\right)^{r/2},
\]
which proves \eqref{eq:r-moment-displacement-main}. The tail bound
\eqref{eq:tail-displacement-main} is then just Chebyshev's inequality. 
\end{proof}
\begin{proof}[Proof of Lemma~\ref{lem:weighted-moments}]
Fix $x\in\mathbb{R}_{+}^{d}$ and write $S_{x}=\left\lVert x\right\rVert _{1}$.
Let $\Sigma_{n}=N_{1}+\cdots+N_{d}$ and $V=U_{1}+\cdots+U_{d}$.
Then $\Sigma_{n}\sim\mathrm{Poisson}(nS_{x})$, $0\le V\le d$, and
$\left\lVert Y_{n,x}\right\rVert _{1}=(\Sigma_{n}+V)/n$. If $\nu=0$,
then $w_{0}=1$ and \eqref{eq:weighted-moment-bound-main} holds with
$A_{0,d}=1$. Assume henceforth that $\nu>0$, and choose an integer
$m\ge\nu$, for example $m=\lceil\nu\rceil$. By monotonicity of $L^{q}$
norms, $\mathbb{E}[(1+\left\lVert Y_{n,x}\right\rVert _{1})^{\nu}]\le\mathbb{E}[(1+\left\lVert Y_{n,x}\right\rVert _{1})^{m}]^{\nu/m}$,
so it suffices to control the $m$th moment. Since $0\le V\le d$,
we have $1+\left\lVert Y_{n,x}\right\rVert _{1}\le1+\Sigma_{n}/n+d$,
and therefore $(1+\left\lVert Y_{n,x}\right\rVert _{1})^{m}\le C_{m,d}\left(1+\left(\frac{\Sigma_{n}}{n}\right)^{m}\right)$
for a constant $C_{m,d}>0$. The $m$th moment of a Poisson random
variable with mean $\lambda$ is the Touchard polynomial $T_{m}(\lambda)$,
a polynomial of degree $m$ with nonnegative coefficients; see, for
example, \cite[Ch.~3]{Charalambides2002}. In particular, there
exists $B_{m}>0$ such that $\mathbb{E}[\Sigma_{n}^{m}]\le B_{m}(1+nS_{x})^{m}$.
Therefore $\mathbb{E}\left[\left(\frac{\Sigma_{n}}{n}\right)^{m}\right]\le B_{m}(1+nS_{x})^{m}/n^{m}\le B_{m}(1+S_{x})^{m}$,
because $1+nS_{x}\le n(1+S_{x})$ for $n\ge1$. Combining the last
estimates yields $\mathbb{E}[(1+\left\lVert Y_{n,x}\right\rVert _{1})^{m}]\le C'_{m,d}(1+S_{x})^{m}$,
and hence $\mathbb{E}[w_{\nu}(Y_{n,x})]=\mathbb{E}[(1+\left\lVert Y_{n,x}\right\rVert _{1})^{\nu}]\le A_{\nu,d}(1+S_{x})^{\nu}=A_{\nu,d}w_{\nu}(x)$.
This proves \eqref{eq:weighted-moment-bound-main}. The operator bound
\eqref{eq:weighted-operator-bound-main} follows from Proposition~\ref{prop:prob-representation},
because $\mathbb{E}[|f(Y_{n,x})|]\le\left\lVert f\right\rVert _{\infty,\nu}\,\mathbb{E}[w_{\nu}(Y_{n,x})]<\infty$:
$\left\lvert (\mathcal{K}_{n}^{(d)}f)(x)\right\rvert \le\mathbb{E}[\left\lvert f(Y_{n,x})\right\rvert ]\le\left\lVert f\right\rVert _{\infty,\nu}\,\mathbb{E}[w_{\nu}(Y_{n,x})]\le A_{\nu,d}\,\left\lVert f\right\rVert _{\infty,\nu}\,w_{\nu}(x)$,
and now take the supremum over $x$. 
\end{proof}
\begin{proof}[Proof of Theorem~\ref{thm:compact-modulus}]
Fix $x\in Q_{M}$ and write $Y=Y_{n,x}$. By Proposition~\ref{prop:prob-representation},
$(\mathcal{K}_{n}^{(d)}f)(x)-f(x)=\mathbb{E}[f(Y)-f(x)]$. Split the
expectation according to the event $A=\{\left\lVert Y-x\right\rVert _{2}\le r\}$.
On $A$, the point $Y$ lies in $Q_{M+r}$, so $\left\lvert f(Y)-f(x)\right\rvert \le\omega_{Q_{M+r}}(f;r)$.
On $A^{c}$, we use the trivial bound $\left\lvert f(Y)-f(x)\right\rvert \le2\left\lVert f\right\rVert _{L^{\infty}(\mathbb{R}_{+}^{d})}$.
Therefore, 
\[
\left\lvert (\mathcal{K}_{n}^{(d)}f)(x)-f(x)\right\rvert \le\omega_{Q_{M+r}}(f;r)+2\left\lVert f\right\rVert _{L^{\infty}(\mathbb{R}_{+}^{d})}\,\mathbb{P}(A^{c}).
\]
By Lemma~\ref{lem:displacement-moments}, 
\[
\mathbb{P}(A^{c})=\mathbb{P}(\left\lVert Y-x\right\rVert _{2}>r)\le\frac{1}{r^{2}}\left(\frac{\left\lVert x\right\rVert _{1}}{n}+\frac{d}{3n^{2}}\right)\le\frac{1}{r^{2}}\left(\frac{dM}{n}+\frac{d}{3n^{2}}\right),
\]
because $x\in Q_{M}$ implies $\left\lVert x\right\rVert _{1}\le dM$.
Taking the supremum over $x\in Q_{M}$ proves \eqref{eq:compact-modulus-estimate-main}. 
\end{proof}
\begin{proof}[Proof of Corollary~\ref{cor:compact-uniform}]
Fix $M>0$ and $\varepsilon>0$. Since $f$ is continuous on the
compact cube $Q_{M+1}$, it is uniformly continuous there. Choose
$0<r\le1$ such that $\omega_{Q_{M+1}}(f;r)<\varepsilon/2$. Since
$Q_{M+r}\subseteq Q_{M+1}$, the first term in \eqref{eq:compact-modulus-estimate-main}
is at most $\varepsilon/2$. Keeping this $r$ fixed and letting $n\to\infty$,
the second term in \eqref{eq:compact-modulus-estimate-main} tends
to $0$. Hence $\left\lVert \mathcal{K}_{n}^{(d)}f-f\right\rVert _{L^{\infty}(Q_{M})}<\varepsilon$
for all sufficiently large $n$. 
\end{proof}
\begin{proof}[Proof of Theorem~\ref{thm:compact-holder}]
Fix $x\in Q_{M}$ and write $Y=Y_{n,x}$. Split according to the
event $A=\{\left\lVert Y-x\right\rVert _{2}\le1\}$. On $A$, the
point $Y$ lies in $Q_{M+1}$, so the local Hölder condition \eqref{eq:local-holder-compact-main}
yields $\left\lvert f(Y)-f(x)\right\rvert \le H\left\lVert Y-x\right\rVert _{2}^{\alpha}$.
On $A^{c}$ we use the crude estimate $\left\lvert f(Y)-f(x)\right\rvert \le2\left\lVert f\right\rVert _{L^{\infty}(\mathbb{R}_{+}^{d})}$.
Therefore, 
\[
\left\lvert (\mathcal{K}_{n}^{(d)}f)(x)-f(x)\right\rvert \le H\mathbb{E}[\left\lVert Y-x\right\rVert _{2}^{\alpha}]+2\left\lVert f\right\rVert _{L^{\infty}(\mathbb{R}_{+}^{d})}\mathbb{P}(A^{c}).
\]
By Lemma~\ref{lem:displacement-moments}, 
\[
\mathbb{E}[\left\lVert Y-x\right\rVert _{2}^{\alpha}]\le C_{\alpha,d}\left(\frac{1+\left\lVert x\right\rVert _{1}}{n}\right)^{\alpha/2}\le C_{\alpha,d}\left(\frac{1+dM}{n}\right)^{\alpha/2},
\]
and 
\[
\mathbb{P}(A^{c})\le\frac{\left\lVert x\right\rVert _{1}}{n}+\frac{d}{3n^{2}}\le\frac{dM}{n}+\frac{d}{3n^{2}}.
\]
Taking the supremum over $x\in Q_{M}$ gives \eqref{eq:compact-holder-rate-main}. 
\end{proof}
\begin{proof}[Proof of Theorem~\ref{thm:weighted-holder-rate}]
Fix $x\in\mathbb{R}_{+}^{d}$ and write $Y=Y_{n,x}$. Since $f\in L_{\infty,\nu}(\mathbb{R}_{+}^{d})$
and Lemma~\ref{lem:weighted-moments} gives $\mathbb{E}[w_{\nu}(Y)]<\infty$,
we have $\mathbb{E}[|f(Y)|]<\infty$. Hence Proposition~\ref{prop:prob-representation}
applies, and $(\mathcal{K}_{n}^{(d)}f)(x)-f(x)=\mathbb{E}[f(Y)-f(x)]$.
Since $[f]_{\nu,\alpha,*}<\infty$, we may estimate pointwise by $\left\lvert f(Y)-f(x)\right\rvert \le[f]_{\nu,\alpha,*}\,w_{\nu}(x)\left(\frac{\left\lVert Y-x\right\rVert _{2}}{\sqrt{1+\left\lVert x\right\rVert _{1}}}\right)^{\alpha}$.
Taking expectations and applying Lemma~\ref{lem:displacement-moments}
yields 
\[
\frac{\left\lvert (\mathcal{K}_{n}^{(d)}f)(x)-f(x)\right\rvert }{w_{\nu}(x)}\le[f]_{\nu,\alpha,*}\frac{\mathbb{E}[\left\lVert Y-x\right\rVert _{2}^{\alpha}]}{(1+\left\lVert x\right\rVert _{1})^{\alpha/2}}\le C_{\alpha,d}[f]_{\nu,\alpha,*}\,n^{-\alpha/2}.
\]
Taking the supremum over $x$ proves \eqref{eq:weighted-holder-rate-main}. 
\end{proof}
\begin{proof}[Proof of Theorem~\ref{thm:weighted-lp-rate}]
Fix $x\in\mathbb{R}_{+}^{d}$ and write $Y=Y_{n,x}$. Since $H\in L_{p,\eta}(\mathbb{R}_{+}^{d})$,
there exists $x_{0}\in\mathbb{R}_{+}^{d}$ such that $H(x_{0})<\infty$.
Applying \eqref{eq:weighted-lp-envelope-main} with $x=x_{0}$ gives
\[
|f(y)|\le|f(x_{0})|+H(x_{0})\left(\frac{\left\lVert y-x_{0}\right\rVert _{2}}{\sqrt{1+\left\lVert x_{0}\right\rVert _{1}}}\right)^{\alpha}\qquad(y\in\mathbb{R}_{+}^{d}),
\]
so $f$ has at most polynomial growth of order $\alpha$ and, in particular,
is locally integrable. Since $0<\alpha\le1$,
$\left\lVert Y\right\rVert _{2}^{\alpha}\le\left\lVert x\right\rVert _{2}^{\alpha}+\left\lVert Y-x\right\rVert _{2}^{\alpha}$,
and Lemma~\ref{lem:displacement-moments} implies $\mathbb{E}[\left\lVert Y-x\right\rVert _{2}^{\alpha}]<\infty$.
Therefore $\mathbb{E}[\left\lVert Y\right\rVert _{2}^{\alpha}]<\infty$,
and hence $\mathbb{E}[|f(Y)|]<\infty$. Hence Proposition~\ref{prop:prob-representation}
applies and $(\mathcal{K}_{n}^{(d)}f)(x)-f(x)=\mathbb{E}[f(Y)-f(x)]$.
Taking absolute values and using \eqref{eq:weighted-lp-envelope-main}
gives $\left\lvert (\mathcal{K}_{n}^{(d)}f)(x)-f(x)\right\rvert \le H(x)\,\mathbb{E}\left[\left(\frac{\left\lVert Y-x\right\rVert _{2}}{\sqrt{1+\left\lVert x\right\rVert _{1}}}\right)^{\alpha}\right]$.
Lemma~\ref{lem:displacement-moments} yields 
\[
\mathbb{E}\left[\left(\frac{\left\lVert Y-x\right\rVert _{2}}{\sqrt{1+\left\lVert x\right\rVert _{1}}}\right)^{\alpha}\right]\le C_{\alpha,d}\,n^{-\alpha/2},
\]
so pointwise on $\mathbb{R}_{+}^{d}$, 
\[
\left\lvert (\mathcal{K}_{n}^{(d)}f)(x)-f(x)\right\rvert \le C_{\alpha,d}\,n^{-\alpha/2}H(x).
\]
Divide by $w_{\eta}(x)^{1/p}$, raise to the $p$th power, integrate
over $\mathbb{R}_{+}^{d}$, and take the $p$th root. This gives \eqref{eq:weighted-lp-rate-main}. 
\end{proof}
\begin{proof}[Proof of Corollary~\ref{cor:weighted-sup-to-lp}]
By Theorem~\ref{thm:weighted-holder-rate}, 
\[
\left\lvert (\mathcal{K}_{n}^{(d)}f)(x)-f(x)\right\rvert \le C_{\alpha,d}[f]_{\nu,\alpha,*}\,n^{-\alpha/2}w_{\nu}(x).
\]
Therefore, 
\begin{align*}
\left\lVert \mathcal{K}_{n}^{(d)}f-f\right\rVert _{p,\eta}^{p} & \le C_{\alpha,d}^{p}[f]_{\nu,\alpha,*}^{p}n^{-\alpha p/2}\int_{\mathbb{R}_{+}^{d}}\frac{w_{\nu}(x)^{p}}{w_{\eta}(x)}\,\mathrm{d}x\\
 & =C_{\alpha,d}^{p}[f]_{\nu,\alpha,*}^{p}n^{-\alpha p/2}\int_{\mathbb{R}_{+}^{d}}(1+\left\lVert x\right\rVert _{1})^{\nu p-\eta}\,\mathrm{d}x.
\end{align*}
The final integral is finite because $\eta-\nu p>d$. Taking $p$th
roots proves \eqref{eq:weighted-sup-to-lp-main}. 
\end{proof}

\subsection{Proofs for component-count approximation}
\begin{proof}[Proof of Proposition~\ref{prop:compact-tailfree}]
The only new point is the truncation estimate. Since $\mathcal{K}_{n}^{(d)}f-\widetilde{g}_{n,N}^{(M)}=\sum_{m\notin F_{N}}a_{m,n}(\varphi_{m,n}-\varphi_{\ell_{N},n})$,
it suffices to bound the kernels on $Q_{M}$. Fix $m\notin F_{N}$.
Then some coordinate $j$ satisfies $m_{j}\ge N+1$, so $m_{j}-1\ge N\ge nM$.
The one-dimensional Erlang kernel $x\mapsto\tau_{m_{j},n}(x)$ has
mode at $(m_{j}-1)/n$, which lies to the right of $M$. Hence, for
$0\le x\le M$, $\tau_{m_{j},n}(x)\le\tau_{m_{j},n}(M)=ne^{-nM}\frac{(nM)^{m_{j}-1}}{(m_{j}-1)!}$.
Moreover, because $m_{j}-1\ge N\ge nM$, the sequence $r\mapsto(nM)^{r}/r!$
is decreasing for $r\ge N$, so $\tau_{m_{j},n}(x)\le ne^{-nM}\frac{(nM)^{N}}{N!}$
for $0\le x\le M$. Using also the crude bound $\tau_{m_{i},n}\le n$
from Lemma~\ref{lem:kernel-bound} for the remaining coordinates,
we obtain $\left\lVert \varphi_{m,n}\right\rVert _{L^{\infty}(Q_{M})}\le n^{d}e^{-nM}\frac{(nM)^{N}}{N!}$.
Exactly the same argument applies to the index $\ell_{N}$,
since every coordinate of $\ell_{N}$ equals $N+1$. Therefore, 
\begin{align*}
\left\lVert \mathcal{K}_{n}^{(d)}f-\widetilde{g}_{n,N}^{(M)}\right\rVert _{L^{\infty}(Q_{M})} & \le\sum_{m\notin F_{N}}a_{m,n}\left(\left\lVert \varphi_{m,n}\right\rVert _{L^{\infty}(Q_{M})}+\left\lVert \varphi_{\ell_{N},n}\right\rVert _{L^{\infty}(Q_{M})}\right)\\
 & \le2n^{d}e^{-nM}\frac{(nM)^{N}}{N!}\sum_{m\notin F_{N}}a_{m,n}\\
 & \le2n^{d}e^{-nM}\frac{(nM)^{N}}{N!},
\end{align*}
which proves \eqref{eq:compact-tailfree-trunc-main}. The estimate
\eqref{eq:compact-tailfree-modulus-main} follows by combining this
bound with Theorem~\ref{thm:compact-modulus}. 
\end{proof}
\begin{proof}[Proof of Corollary~\ref{cor:compact-component-rate}]
For each $n\in\mathbb{N}$, choose $N_{n}=\lceil n(M+1)\rceil$ and
define $g_{n}=\widetilde{g}_{n,N_{n}}^{(M)}$. By Proposition~\ref{prop:compact-tailfree}
and Theorem~\ref{thm:compact-holder}, 
\begin{align*}
\left\lVert g_{n}-f\right\rVert _{L^{\infty}(Q_{M})} & \le HC_{\alpha,d}\left(\frac{1+dM}{n}\right)^{\alpha/2}+2\left\lVert f\right\rVert _{L^{\infty}(\mathbb{R}_{+}^{d})}\left(\frac{dM}{n}+\frac{d}{3n^{2}}\right)\\
 & \qquad+2n^{d}e^{-nM}\frac{(nM)^{N_{n}}}{N_{n}!}.
\end{align*}
The first two terms are $O(n^{-\alpha/2})$. Since $N_{n}/n\to M+1>M$,
Stirling's formula gives $e^{-nM}\frac{(nM)^{N_{n}}}{N_{n}!}=O(e^{-c_{M}n})$
for some $c_{M}>0$. Hence there exist constants $C_{1}>0$ and $n_{0}\in\mathbb{N}$
such that $\left\lVert g_{n}-f\right\rVert _{L^{\infty}(Q_{M})}\le C_{1}n^{-\alpha/2}$
for $n\ge n_{0}$. Also, $g_{n}$ has at most $N_{n}^{d}+1\le B_{M}n^{d}$
components for some constant $B_{M}>0$. Now fix an integer $K\ge B_{M}(2n_{0})^{d}$
and set $n(K)=\left\lfloor \left(\frac{K}{B_{M}}\right)^{1/d}\right\rfloor $.
Define $g_{K}=g_{n(K)}$. Then $g_{K}$ has at most $K$ components,
and because $\left(K/B_{M}\right)^{1/d}\ge2n_{0}$, we have $n(K)\ge n_{0}$.
Also, since $\lfloor t\rfloor\ge t/2$ for all $t\ge1$, $n(K)\ge\frac{1}{2}\left(\frac{K}{B_{M}}\right)^{1/d}$.
Therefore $\left\lVert g_{K}-f\right\rVert _{L^{\infty}(Q_{M})}\le C_{1}n(K)^{-\alpha/2}\le C_{1}2^{\alpha/2}B_{M}^{\alpha/(2d)}K^{-\alpha/(2d)}$.
Thus the conclusion holds with $C=C_{1}2^{\alpha/2}B_{M}^{\alpha/(2d)}$
and $K_{0}=\left\lceil B_{M}(2n_{0})^{d}\right\rceil $. 
\end{proof}
\begin{proof}[Proof of Lemma~\ref{lem:erlang-shape-sup}]
For $m=1$, we have $\left\lVert \tau_{1,n}\right\rVert _{L^{\infty}(\mathbb{R}_{+})}=n$.
If $m\ge2$, then $\tau_{m,n}$ attains its maximum at the mode $x=(m-1)/n$,
so $\left\lVert \tau_{m,n}\right\rVert _{L^{\infty}(\mathbb{R}_{+})}=ne^{-(m-1)}\frac{(m-1)^{m-1}}{(m-1)!}$.
Stirling's lower bound $(m-1)!\ge\sqrt{2\pi(m-1)}\,((m-1)/e)^{m-1}$
therefore yields $\left\lVert \tau_{m,n}\right\rVert _{L^{\infty}(\mathbb{R}_{+})}\le\frac{n}{\sqrt{2\pi(m-1)}}\le nm^{-1/2}$.
For the consequence, note first that $w_{\nu}(x)\ge1$, so $\left\lVert \varphi_{m,n}\right\rVert _{\infty,\nu}\le\left\lVert \varphi_{m,n}\right\rVert _{L^{\infty}(\mathbb{R}_{+}^{d})}=\prod_{i=1}^{d}\left\lVert \tau_{m_{i},n}\right\rVert _{L^{\infty}(\mathbb{R}_{+})}$.
If some coordinate $m_{j}\ge N+1$, then the one-dimensional estimate
above gives $\left\lVert \tau_{m_{j},n}\right\rVert _{L^{\infty}}\le n(N+1)^{-1/2}$,
while the remaining factors are at most $n$. Hence 
\[
\left\lVert \varphi_{m,n}\right\rVert _{\infty,\nu}\le n^{d}(N+1)^{-1/2}.
\]
\end{proof}
\begin{proof}[Proof of Proposition~\ref{prop:weighted-tailfree}]
As before, $\mathcal{K}_{n}^{(d)}f-\widetilde{g}_{n,N}^{(\nu)}=\sum_{m\notin F_{N}}a_{m,n}(\varphi_{m,n}-\varphi_{\ell_{N},n})$.
If $m\notin F_{N}$, then some coordinate of $m$ is at least $N+1$,
so Lemma~\ref{lem:erlang-shape-sup} gives $\left\lVert \varphi_{m,n}\right\rVert _{\infty,\nu}\le n^{d}(N+1)^{-1/2}$.
The same bound applies to the kernel $\varphi_{\ell_{N},n}$.
Therefore, 
\begin{align*}
\left\lVert \mathcal{K}_{n}^{(d)}f-\widetilde{g}_{n,N}^{(\nu)}\right\rVert _{\infty,\nu} & \le\sum_{m\notin F_{N}}a_{m,n}\left(\left\lVert \varphi_{m,n}\right\rVert _{\infty,\nu}+\left\lVert \varphi_{\ell_{N},n}\right\rVert _{\infty,\nu}\right)\\
 & \le2n^{d}(N+1)^{-1/2}\sum_{m\notin F_{N}}a_{m,n}\\
 & \le2n^{d}(N+1)^{-1/2},
\end{align*}
which proves \eqref{eq:weighted-tailfree-trunc-main}. 
\end{proof}
\begin{proof}[Proof of Corollary~\ref{cor:weighted-component-rate}]
For each $n\in\mathbb{N}$, choose $N_{n}=\lceil n^{2d+\alpha}\rceil$
and define $g_{n}=\widetilde{g}_{n,N_{n}}^{(\nu)}$. By Proposition~\ref{prop:weighted-tailfree}
and Theorem~\ref{thm:weighted-holder-rate}, 
\[
\left\lVert g_{n}-f\right\rVert _{\infty,\nu}\le C_{\alpha,d}[f]_{\nu,\alpha,*}\,n^{-\alpha/2}+2n^{d}(N_{n}+1)^{-1/2}\le C_{1}n^{-\alpha/2}
\]
for some constant $C_{1}>0$ and all sufficiently large $n$. Choose
$n_{0}\in\mathbb{N}$ so that this bound holds for every $n\ge n_{0}$.
Moreover, $g_{n}$ has at most $N_{n}^{d}+1\le Bn^{d(2d+\alpha)}$
components for some constant $B>0$. Now fix an integer $K\ge B(2n_{0})^{d(2d+\alpha)}$
and set $n(K)=\left\lfloor \left(\frac{K}{B}\right)^{1/[d(2d+\alpha)]}\right\rfloor $.
Define $g_{K}=g_{n(K)}$. Then $g_{K}$ has at most $K$ components,
and because $\left(\frac{K}{B}\right)^{1/[d(2d+\alpha)]}\ge2n_{0}$,
we have $n(K)\ge n_{0}$. Also, since $\lfloor t\rfloor\ge t/2$ for
all $t\ge1$, $n(K)\ge\frac{1}{2}\left(\frac{K}{B}\right)^{1/[d(2d+\alpha)]}$.
Therefore $\left\lVert g_{K}-f\right\rVert _{\infty,\nu}\le C_{1}n(K)^{-\alpha/2}\le C_{1}2^{\alpha/2}B^{\alpha/[2d(2d+\alpha)]}K^{-\alpha/[2d(2d+\alpha)]}$.
Thus the conclusion holds with $C=C_{1}2^{\alpha/2}B^{\alpha/[2d(2d+\alpha)]}$
and $K_{0}=\left\lceil B(2n_{0})^{d(2d+\alpha)}\right\rceil $. 
\end{proof}
\begin{proof}[Proof of Lemma~\ref{lem:erlang-shape-lp}]
The inequality $\|u\|_{L^{p}}\le\|u\|_{L^{1}}^{1/p}\|u\|_{L^{\infty}}^{1-1/p}$
for $u\in L^{1}\cap L^{\infty}$, together with $\|\tau_{m,n}\|_{L^{1}(\mathbb{R}_{+})}=1$
and Lemma~\ref{lem:erlang-shape-sup}, gives $\left\lVert \tau_{m,n}\right\rVert _{L^{p}(\mathbb{R}_{+})}\le\left\lVert \tau_{m,n}\right\rVert _{L^{\infty}(\mathbb{R}_{+})}^{1-1/p}\le\left(nm^{-1/2}\right)^{1-1/p}=n^{1-1/p}m^{-(1-1/p)/2}$.
For the consequence, note that $w_{\eta}(x)\ge1$, so $\left\lVert \varphi_{m,n}\right\rVert _{p,\eta}\le\left\lVert \varphi_{m,n}\right\rVert _{L^{p}(\mathbb{R}_{+}^{d})}=\prod_{i=1}^{d}\left\lVert \tau_{m_{i},n}\right\rVert _{L^{p}(\mathbb{R}_{+})}$.
If some coordinate $m_{j}\ge N+1$, then the one-dimensional estimate
above gives $\|\tau_{m_{j},n}\|_{L^{p}}\le n^{1-1/p}(N+1)^{-(1-1/p)/2}$,
while the remaining factors are at most $n^{1-1/p}$. Multiplying
the factors yields the stated bound. 
\end{proof}
\begin{proof}[Proof of Proposition~\ref{prop:weighted-lp-tailfree}]
As before, $\mathcal{K}_{n}^{(d)}f-\widetilde{g}_{n,N}^{(p,\eta)}=\sum_{m\notin F_{N}}a_{m,n}(\varphi_{m,n}-\varphi_{\ell_{N},n})$.
If $m\notin F_{N}$, then some coordinate of $m$ is at least $N+1$,
so Lemma~\ref{lem:erlang-shape-lp} gives $\|\varphi_{m,n}\|_{p,\eta}\le n^{d(1-1/p)}(N+1)^{-(1-1/p)/2}$.
The same bound applies to the kernel $\varphi_{\ell_{N},n}$.
Therefore, 
\begin{align*}
\left\lVert \mathcal{K}_{n}^{(d)}f-\widetilde{g}_{n,N}^{(p,\eta)}\right\rVert _{p,\eta} & \le\sum_{m\notin F_{N}}a_{m,n}\left(\left\lVert \varphi_{m,n}\right\rVert _{p,\eta}+\left\lVert \varphi_{\ell_{N},n}\right\rVert _{p,\eta}\right)\\
 & \le2n^{d(1-1/p)}(N+1)^{-(1-1/p)/2}\sum_{m\notin F_{N}}a_{m,n}\\
 & \le2n^{d(1-1/p)}(N+1)^{-(1-1/p)/2},
\end{align*}
which proves \eqref{eq:weighted-lp-tailfree-trunc-main}. 
\end{proof}
\begin{proof}[Proof of Corollary~\ref{cor:weighted-lp-component-rate}]
Set $\gamma_{p,\alpha}=2d+\frac{\alpha p}{p-1}$. For each $n\in\mathbb{N}$,
choose $N_{n}=\lceil n^{\gamma_{p,\alpha}}\rceil$ and define $g_{n}=\widetilde{g}_{n,N_{n}}^{(p,\eta)}$.
By Proposition~\ref{prop:weighted-lp-tailfree} and Theorem~\ref{thm:weighted-lp-rate},
\begin{align*}
\left\lVert g_{n}-f\right\rVert _{p,\eta} & \le C_{\alpha,d}\,n^{-\alpha/2}\,\left\lVert H\right\rVert _{p,\eta}+2n^{d(1-1/p)}(N_{n}+1)^{-(1-1/p)/2}\\
 & \le C_{\alpha,d}\,n^{-\alpha/2}\,\left\lVert H\right\rVert _{p,\eta}+2n^{d(1-1/p)-\gamma_{p,\alpha}(1-1/p)/2}\\
 & =C_{\alpha,d}\,n^{-\alpha/2}\,\left\lVert H\right\rVert _{p,\eta}+2n^{-\alpha/2}\\
 & \le C_{1}n^{-\alpha/2}
\end{align*}
for some constant $C_{1}>0$ and all $n\in\mathbb{N}$. Moreover,
$g_{n}$ has at most $N_{n}^{d}+1\le Bn^{d\gamma_{p,\alpha}}$ components
for some constant $B>0$. Choose $n_{0}\in\mathbb{N}$ so that this
bound holds for every $n\ge n_{0}$, fix an integer $K\ge B(2n_{0})^{d\gamma_{p,\alpha}}$,
and set $n(K)=\left\lfloor \left(\frac{K}{B}\right)^{1/(d\gamma_{p,\alpha})}\right\rfloor $.
Define $g_{K}=g_{n(K)}$. Then $g_{K}$ has at most $K$ components,
and since $\left(\frac{K}{B}\right)^{1/(d\gamma_{p,\alpha})}\ge2n_{0}$,
we have $n(K)\ge n_{0}$ and $n(K)\ge\frac{1}{2}\left(\frac{K}{B}\right)^{1/(d\gamma_{p,\alpha})}$.
Therefore, 
\[
\left\lVert g_{K}-f\right\rVert _{p,\eta}\le C_{1}n(K)^{-\alpha/2}\le C_{1}2^{\alpha/2}B^{\alpha/(2d\gamma_{p,\alpha})}K^{-\alpha/(2d\gamma_{p,\alpha})}.
\]
Thus the conclusion holds with 
\[
C=C_{1}2^{\alpha/2}B^{\alpha/(2d\gamma_{p,\alpha})}\quad\text{and}\quad K_{0}=\left\lceil B(2n_{0})^{d\gamma_{p,\alpha}}\right\rceil .
\]
\end{proof}

\section*{Acknowledgements}

The author acknowledges support from the Australian Research Council
through Discovery Projects DP250100860 and DP230100905. AI was used in the proofreading of the final manuscript.

\section*{Competing interests}

The author declares no competing interests. 

\bigskip{}
 \textbf{Address for correspondence:}\\
 Hien Duy Nguyen\\
 School of Computing, Engineering and Mathematical Sciences, La Trobe
University\\
 Bundoora, VIC 3086, Australia\\
 Institute of Mathematics for Industry, Kyushu University\\
 Fukuoka 819-0395, Japan\\
 \texttt{H.Nguyen5@latrobe.edu.au} 

\begin{thebibliography}{10}
\bibitem{AltomareCampiti1994} Altomare, F., \& Campiti, M. (1994).
\emph{Korovkin-type approximation theory and its applications}. Berlin:
Walter de Gruyter.

\bibitem{AltomareCappellettiLeonessa2013} Altomare, F., Cappelletti
Montano, M., \& Leonessa, V. (2013). On a generalization of Szász--Mirakjan--Kantorovich
operators. \emph{Results in Mathematics}, \textbf{63}, 837--863.

\bibitem{AsmussenAlbrecher2010} Asmussen, S., \& Albrecher, H. (2010).
\emph{Ruin probabilities} (2nd ed.). Singapore: World Scientific.
\url{https://doi.org/10.1142/7431}

\bibitem{BagnatoPunzo2013} Bagnato, L., \& Punzo, A. (2013). Finite
mixtures of unimodal beta and gamma densities and the bumps algorithm.
\emph{Computational Statistics}, \textbf{28}(4), 1571--1597.

\bibitem{BarbosuEtAl2010} Bărbosu, D., Pop, O. T., \& Miclăuş, D.
(2010). The Kantorovich form of some extensions for the Szász--Mirakjan
operators. \emph{Revue d'Analyse Numérique et de Théorie de l'Approximation},
\textbf{39}(1), 8--20.

\bibitem{BochkinaRousseau2017} Bochkina, N., \& Rousseau, J. (2017).
Adaptive density estimation based on a mixture of gammas. \emph{Electronic
Journal of Statistics}, \textbf{11}(1), 916--962.

\bibitem{Boland2007} Boland, P. J. (2007). \emph{Statistical
and probabilistic methods in actuarial science}. Boca Raton, FL:
Chapman \& Hall/CRC.

\bibitem{Butzer1954} Butzer, P. L. (1954). On the extensions of Bernstein
polynomials to the infinite interval. \emph{Proceedings of the American
Mathematical Society}, \textbf{5}(4), 547--553.

\bibitem{Charalambides2002} Charalambides, Ch. A. (2002).
\emph{Enumerative combinatorics}. Boca Raton, FL: Chapman \& Hall/CRC.

\bibitem{DitzianTotik1987} Ditzian, Z., \& Totik, V. (1987). \emph{Moduli
of smoothness}. New York: Springer.

\bibitem{GuiHuangLin2021} Gui, W., Huang, R., \& Lin, X. S. (2021).
Fitting multivariate Erlang mixtures to data: A roughness penalty
approach. \emph{Journal of Computational and Applied Mathematics},
\textbf{386}, Article 113216.

\bibitem{KaasGoovaertsDhaeneDenuit2008} Kaas, R., Goovaerts, M.,
Dhaene, J., \& Denuit, M. (2008). \emph{Modern actuarial risk theory
using R} (2nd ed.). Berlin: Springer.

\bibitem{KimKottas2022} Kim, H., \& Kottas, A. (2022). Erlang mixture
modeling for Poisson process intensities. \emph{Statistics and Computing},
\textbf{32}, Article 3.

\bibitem{KlugmanPanjerWillmot2012} Klugman, S. A., Panjer, H. H.,
\& Willmot, G. E. (2012). \emph{Loss models: From data to decisions}
(4th ed.). Hoboken, NJ: Wiley.

\bibitem{LeeLin2010} Lee, S. C. K., \& Lin, X. S. (2010). Modeling
and evaluating insurance losses via mixtures of Erlang distributions.
\emph{North American Actuarial Journal}, \textbf{14}(1), 107--130.

\bibitem{LeeLin2012} Lee, S. C. K., \& Lin, X. S. (2012). Modeling
dependent risks with multivariate Erlang mixtures. \emph{ASTIN Bulletin:
The Journal of the IAA}, \textbf{42}(1), 153--180.

\bibitem{MahmudovKara2024} Mahmudov, N. I., \& Kara, M. (2024). New
Kantorovich-type Szász--Mirakjan operators. \emph{Bulletin of the
Iranian Mathematical Society}, \textbf{50}(5), Article 75.

\bibitem{Mirakyan1941} Mirakyan, G. M. (1941). Approximation des
fonctions continues au moyen de polynômes de la forme $e^{-nx}\sum_{k=0}^{\infty}f\!\left(\frac{k}{n}\right)\frac{(nx)^{k}}{k!}$.
\emph{Doklady Akademii Nauk SSSR}, \textbf{31}, 201--205.

\bibitem{NguyenEtAl2020} Nguyen, T. T., Nguyen, H. D., Chamroukhi,
F., \& McLachlan, G. J. (2020). Approximation by finite mixtures of
continuous density functions that vanish at infinity. \emph{Cogent
Mathematics \& Statistics}, \textbf{7}(1), Article 1750861.

\bibitem{NguyenEtAl2023} Nguyen, T., Chamroukhi, F., Nguyen, H. D.,
\& McLachlan, G. J. (2023). Approximation of probability density functions
via location-scale finite mixtures in Lebesgue spaces. \emph{Communications
in Statistics---Theory and Methods}, \textbf{52}(14), 5048--5059.

\bibitem{Promislow2015} Promislow, S. D. (2015). \emph{Fundamentals
of actuarial mathematics} (3rd ed.). Hoboken, NJ: Wiley.

\bibitem{Szasz1950} Szász, O. (1950). Generalization of S. Bernstein's
polynomials to the infinite interval. \emph{Journal of Research of
the National Bureau of Standards}, \textbf{45}(3), 239--245. \url{https://doi.org/10.6028/jres.045.024}

\bibitem{Tijms2003} Tijms, H. C. (2003). \emph{A first course in
stochastic models}. Chichester: Wiley.

\bibitem{Totik1983} Totik, V. (1983). Approximation by Szász--Mirakjan--Kantorovich
operators in $L^{p}$ $(p>1)$. \emph{Analysis Mathematica}, \textbf{9}(2),
147--167.

\bibitem{Totik1985} Totik, V. (1985). Saturation of Kantorovich type
operators. \emph{Periodica Mathematica Hungarica}, \textbf{16}(2),
115--126.

\bibitem{vanDerVaart1998} van der Vaart, A. W. (1998). \emph{Asymptotic
statistics}. Cambridge: Cambridge University Press.

\bibitem{VerbelenAntonioClaeskens2016} Verbelen, R., Antonio, K.,
\& Claeskens, G. (2016). Multivariate mixtures of Erlangs for density
estimation under censoring. \emph{Lifetime Data Analysis}, \textbf{22}(3),
429--455.

\bibitem{WillmotLin2011} Willmot, G. E., \& Lin, X. S. (2011). Risk
modelling with the mixed Erlang distribution. \emph{Applied Stochastic
Models in Business and Industry}, \textbf{27}(1), 2--16.

\bibitem{WillmotWoo2015} Willmot, G. E., \& Woo, J.-K. (2015). On
some properties of a class of multivariate Erlang mixtures with insurance
applications. \emph{ASTIN Bulletin: The Journal of the IAA}, \textbf{45}(1),
151--173.

\bibitem{WiperInsuaRuggeri2001} Wiper, M., Ríos Insua, D., \& Ruggeri,
F. (2001). Mixtures of gamma distributions with applications. \emph{Journal
of Computational and Graphical Statistics}, \textbf{10}(3), 440--454.

\bibitem{YadavMishraMeherMursaleen2022} Yadav, R., Mishra, V. N.,
Meher, R., \& Mursaleen, M. (2022). Statistical convergence of Szász--Mirakjan--Kantorovich-type
operators and their bivariate extension. \emph{Filomat}, \textbf{36}(17),
5895--5912.

\bibitem{YinEtAl2019} Yin, C., Lin, X. S., Huang, R., \& Yuan, H.
(2019). On the consistency of penalized MLEs for Erlang mixtures.
\emph{Statistics \& Probability Letters}, \textbf{145}, 12--20.

\bibitem{YoungEtAl2019} Young, D. S., Chen, X., Hewage, D. C., \&
Nilo-Poyanco, R. (2019). Finite mixture-of-gamma distributions: Estimation,
inference, and model-based clustering. \emph{Advances in Data Analysis
and Classification}, \textbf{13}(4), 1053--1082.

\end{thebibliography}
\end{document}